\theoremstyle{plain} 
\newtheorem{thm}{Theorem}[section] 
\newtheorem{cor}[thm]{Corollary} 
\newtheorem{lem}[thm]{Lemma} 
\newtheorem{prop}[thm]{Proposition}
\theoremstyle{definition} 
\newtheorem{defn}{Definition}[section] 
\newtheorem{exem}{Example} [section]
\theoremstyle{remark} 
\newtheorem{rem}{Remark}
\begin{document}

\title[]{$L^2$-cohomology and quasi-isometries on the ends of unbounded geometry}


\author*[1,2]{\pfx{Dr} \fnm{Stefano} \sur{Spessato} \sfx{ORCID: 0000-0003-0069-5853}\email{stefano.spessato@unicusano.it }}

\affil*[1]{\orgname{Università degli Studi "Niccolò Cusano"}, \orgaddress{\street{via don Carlo Gnocchi 3}, \city{Rome}, \postcode{00166}, \country{Italy}}}


\abstract{In this paper we study the minimal and maximal $L^{2}$-cohomology of oriented, possibly not complete, Riemannian manifolds. Our focus will be on both the reduced and the unreduced $L^{2}$-cohomology groups. In particular we will prove that these groups are invariant under uniform homotopy equivalence quasi-isometric on the unbounded ends. A \textit{uniform map} is a uniformly continuous map such that the diameter of the preimage of a subset is bounded in terms of the diameter of the subset itself. A map $f$ between two Riemannian manifolds $(X,g)$ and $(Y,h)$ is \textit{quasi-isometric on the unbounded ends} if $X = M \cup E_X$ where $M$ is the interior of a manifold of bounded geometry with boundary, $E_X$ is an open of $X$ and the restriction of $f$ to $E_X$ is a quasi-isometry. Finally some consequences are shown: the main ones are definition of a mapping cone for $L^2$-cohomology and the invariance of the $L^2$-signature.}

\keywords{$L^2$-cohomology, signature, mapping cone, quasi-isometry}



\maketitle
\subsection*{Acknowledgments}
I would like to thank Francesco Bei for his suggestions and for his very helpful mails.
\bigskip

Data sharing not applicable to this article as no datasets were generated or analysed during the current study.

\section*{Introduction}
In this paper a generalization of the work done by the author in \cite{Spes} is proved. In that work, given a uniform map $f:(M,g) \longrightarrow (N,h)$ between manifolds of bounded geometry, an $\mathcal{L}^2$-bounded operator $T_f: \mathcal{L}^2(N,h) \longrightarrow \mathcal{L}^2(M,g)$ between the spaces of squared-integrable forms was introduced. A \textit{uniform} map is a uniformly continuous map such that for each compact subset $A$, the diameter of the preimage of $A$ is bounded in terms of the diameter of $A$. The operator $T_f$ induces an operator between the reduced and unreduced $L^2$-cohomology groups and it replaces the pullback operator which is not a well-defined operator between the $\mathcal{L}^2$-spaces. As a consequence of this we obtain that the reduced and the unreduced $L^2$-cohomology are invariant under uniform homotopy equivalences.
\\Our goal in this paper is to prove a similar result for the minimal and maximal $L^2$-cohomologies, in both the reduced and unreduced version, of possibly not complete Riemannian manifolds. In order to obtain such a result we need some further assumptions on the homotopy equivalences. Let us briefly introduce these assumptions.
\bigskip
\\Given a Riemannian manifold $(X,g)$, it is always possible to decompose it as $X = M \cup E_X$ where $E_X$ is an open subset of $X$ and $M$ is the interior of a manifold with boundary of bounded geometry (a definition can be found in the paper of Schick \cite{Schick}). We will say that $M$ is an \textit{open of bounded geometry} and $E_X$ will be the \textit{unbounded ends} of $X$. A manifold $(X,g)$ decomposed in this way is called a \textit{manifold of bounded geometry with unbounded ends}.
\\In this paper we will study uniform maps $f:(X,g) \longrightarrow (Y,h)$ between oriented manifolds which are \textit{quasi-isometric on unbounded ends}, i.e. given two manifolds of bounded geometry with unbounded ends $(X = M \cup E_X,g)$ and $(Y = N \cup E_Y,h)$, then $f(E_X) \subseteq E_Y$ and $f_{\vert E_X}$ is a quasi-isometry.
\\So we want to show that if $f: (X,g) \longrightarrow (Y,h)$ is a uniform homotopy equivalence which is quasi-isometric on the unbounded ends, then 
\begin{equation}
    \begin{cases}
    H^k_{2, max}(X,g) \cong H^k_{2, max}(Y,h) \\
    \overline{H}^k_{2, min}(X,g) \cong H^k_{2, min}(Y,h) \\
    H^k_{2, min}(X,g) \cong H^k_{2, min}(Y,h) \\
    \overline{H}^k_{2, min}(X,g) \cong \overline{H}^k_{2, min}(Y,h)
    \end{cases}
\end{equation}
where $H^k_{2, min \backslash max}(X,g)$ and $H^k_{2, min \backslash max}(Y,h)$ are the minimal and maximal $k$-th group of $L^2$-cohomology and $\overline{H}^k_{2, min\backslash max}(X,g)$ and $\overline{H}^k_{2, min\backslash max}(Y,h)$ are their reduced versions. This result is proved in Proposition \ref{proposition1}. Finally we will show some consequences.
\bigskip
\\The structure of the paper is the following: in the first section we introduce the notions of uniform maps quasi-isometric on the unbounded ends and some examples are showed. In the second section we introduce the minimal and maximal $L^2$-cohomology of a Riemannian manifold. In third section we recall the notion of Radon-Nikodym-Lipschitz map. These are maps such that their pullback induced a well-defined $\mathcal{L}^2$-bounded operator. In particular we recall a technique from \cite{Spes} which allows to know if a Lipschitz submersion is a Radon-Nikodym-Lipschitz map or not.
\\In the fourth section, given a uniform map isometric on the unbounded ends $f:(X,g) \longrightarrow (Y,h)$ between oriented manifolds, we introduce a new version of the operator $T_f$. In particular if $(X,g)$ and $(Y,h)$ are manifolds of bounded geometry, then $T_f$ is exactly the same defined in \cite{Spes}. In the fifth section, Proposition \ref{proposition1}, the main result stated above is proved.
\\In the last section we will show some consequences of the existence of $T_f$: first we define a mapping cone for minimal and maximal $L^2$-cohomology. Then we will show that the $L^2$-signature of a complete manifold of dimension $4k$ is invariant under uniform homotopy equivalences isometric on the ends which preserve the orientations. We also prove a result similar to point 1 of  Proposition 5 of the work of Lott \cite{Lott}: if two homotopy equivalent manifolds are isometric outside two compact subsets, then their maximal and minimal $L^2$-cohomology are equivalent. The same also holds for their reduced versions.
\\Finally an example of Riemannian manifolds with the same minimal and maximal $L^2$-cohomology (reduced and not) which are not quasi-isometric is shown.

\section{Quasi-isometries on the unbounded ends and open subsets of bounded geometry}
In this section we introduce the geometric setting of our paper. In particular we introduce the notions of \textit{open of bounded geometry} and of \textit{uniform homotopy quasi-isometric on the unbounded ends}.
\subsection{Manifold of bounded geometry with some unbounded ends}
The following definition is the Definition 2.2 of the work of Schick \cite{Schick}.
\begin{defn}\label{open}
Let $(\overline{M},g)$ an oriented Riemannian manifold with boundary $\partial \overline{M}$ (possibily empty) and denote by $M$ the interior of $\overline{M}$. Consider on $\partial \overline{M}$ the Riemannian metric induced by $g$. Let us denote by $l$ the second fundamental form of $\partial M$ and by $\overline{\nabla}$ the Levi-Civita connection on $\partial \overline{M}$. Then we say that $(\overline{M},g)$ is a \textbf{manifold with boundary and of bounded geometry} if the following holds.
\begin{enumerate}
    \item there exists a number $r_C > 0$ such that the \textit{geodesic collar}
    \begin{equation}
    \begin{split}
    \partial \overline{M} \times [0,r_c) &\longrightarrow M \\
    (x,t) &\longrightarrow exp_x(t\nu_x)
    \end{split}
    \end{equation}
    is a diffeomorphism with its image ($\nu_x$ is the unit inward normal vector).
    \item The injectivity radius $inj_{\partial \overline{M}}$ of $\partial \overline{M}$ is positive.
    \item Consider a number $r$ and define $\mathcal{N}(r) := B_r(\partial \overline{M}) \cap M$. There is a $r_i > 0$ such that if $r \leq r_i$ we have that for each $p$ in $M \setminus \mathcal{N}(r)$ the exponential map is a diffeomorphism on $B_{0_p}(r) \subset T_p M$.
    \item For every $k \in \mathbb{N}$ there is a constant $C_k $ so that $\vert \nabla^i R \vert \leq C_i$ and $\vert \overline{\nabla}^il \vert \leq C_i$ for each $i = 0, \ldots, k$.
\end{enumerate}
Finally, if $M$ is the interior of a manifold of bounded geometry, then it is an \textbf{open of bounded geometry}.
\end{defn}
\begin{rem}
If $M \subset (X,g)$ is an open of bounded geometry, then $(\overline{M},g) \subset (X,g)$ is a submanifold with boundary of bounded geometry.
\end{rem}
\begin{rem}
By Theorem 2.5 of \cite{Schick}, we know that the condition 4. of Definition \ref{open} can be replaced by:
\begin{itemize}
    \item there exist $0 < R_1 \leq inj_{\partial M}$, $0 < R_2 \leq r_C$ and $0 < R_3 < r_i$ and there are some constants $C_k$ for each $k \in \mathbb{N}$ such that whenever we have normal boundary coordinates of width $(r_1, r_2)$ where $r_1 \leq R_1$ and $r_2 \leq R_2$ or Gaussian coordinates of radius $r_3 \leq R_3$ then, in this coordinates
\begin{equation}
    \vert \frac{\partial^\alpha g_{ij}}{\partial x_1^{\alpha_1} \ldots \partial x_m^{\alpha_m}} \vert \leq C_{\alpha} \mbox{     and     }
    \vert \frac{\partial^\alpha g^{ij}}{\partial x_1^{\alpha_1} \ldots \partial x_m^{\alpha_m}} \vert \leq C_{\alpha}
\end{equation}
where $\alpha := \sum_{i = 1}^m \alpha_i$.
\end{itemize}
\end{rem}
\begin{defn}
A \textbf{manifold of bounded geometry with some (possibly) unbounded ends} is an oriented Riemannian manifold $(X,g)$ such that $X = M \cup E_X$ where $M$ is an open of bounded geometry, $E_X$ is an open of $X$ such that $E_X \setminus M \neq \emptyset$ and $\mathcal{N}_X(r_X) \subset M \cap E_X $ for some $r_X \leq r_{C,M}$. Moreover $E_X$ are the \textbf{unbounded ends of $X$}.
\end{defn}
\begin{rem}
Essentially each Riemannian manifold can be seen as a manifold of bounded geometry with some possibly unbounded ends, indeed it is sufficient to choose a point $p$ on $X$ and fix as $M = B_\delta(p)$ where $\delta$ is small enough.
\\This definition will be useful for us when we will talk about uniform map quasi-isometric on the unbounded ends.
\end{rem}
\begin{lem}\label{lemma0}
Let $(X = M \cup E_X, g)$ a manifold of bounded geometry with unbounded ends. Let $p$ be a point in $\mathcal{N}_X(r_X)$. Then since $r_X \leq r_{C,M}$ we can identify $p$ as a point $(x_0,t_0)$ of $\partial M \times (0, r_X)$. Then we have that
\begin{equation}
    d(p, \partial M) = t_0.
\end{equation}
\end{lem}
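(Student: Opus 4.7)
The plan is to combine the upper bound given by the radial collar geodesic with a Gauss-lemma argument for the lower bound, noting that all short curves from $p$ to $\partial M$ must stay inside the collar.

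For the upper bound, the curve $\gamma:[0,t_0] \to M$ defined by $\gamma(s) = \exp_{x_0}(s \nu_{x_0}) = \Phi(x_0,s)$, where $\Phi$ denotes the collar diffeomorphism of Definition \ref{open}, is a unit-speed geodesic joining $x_0 \in \partial M$ to $p$. Hence $d(p,\partial M) \leq L(\gamma) = t_0$.

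For the lower bound, I would apply the Gauss lemma for the normal exponential map: since the curves $s \mapsto \Phi(x,s)$ are unit-speed geodesics issued orthogonally from $\partial M$, one gets that $\Phi^{*}g = dt^{2} + h_{t}$ for a smooth one-parameter family $\{h_{t}\}$ of Riemannian metrics on $\partial M$. Equivalently, the coordinate function $t$ has $|\nabla t|_g = 1$ on the collar image and is therefore $1$-Lipschitz. Now let $\sigma:[0,1]\to \overline{M}$ be any piecewise smooth curve with $\sigma(0)=p$ and $\sigma(1)\in \partial M$. If the image of $\sigma$ is entirely contained in $\Phi(\partial M \times [0,r_{C,M}))$, write $\sigma(s) = \Phi(x(s),t(s))$ with $t(0)=t_{0}$, $t(1)=0$, so that
\begin{equation}
    L(\sigma) = \int_{0}^{1} \sqrt{t'(s)^{2} + h_{t(s)}(x'(s),x'(s))}\, ds \; \geq \; \int_{0}^{1} |t'(s)|\, ds \; \geq \; t_{0}.
\end{equation}

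For the general case one has to rule out that $\sigma$ escapes the collar image while still having length less than $t_{0}$. Since $t_{0} < r_{X} \leq r_{C,M}$, any such $\sigma$ is contained in the metric tubular neighborhood $B_{r_{C,M}}(\partial M)\cap M$. The main (and really only) obstacle is therefore to identify this metric tubular neighborhood with the image of $\Phi$: any point $q$ with $d(q,\partial M) < r_{C,M}$ admits a near-minimizing curve to $\partial M$ whose first-variation limit arrives perpendicularly to $\partial M$, hence (using the bounded geometry hypothesis to guarantee that such a minimizer exists on the scale $r_{C,M}$) $q$ lies on some inward normal geodesic of length less than $r_{C,M}$, i.e.\ $q \in \Phi(\partial M \times [0,r_{C,M}))$. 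Once this identification is in place, $\sigma$ stays inside the collar and the estimate above gives $L(\sigma)\geq t_{0}$, concluding $d(p,\partial M) = t_{0}$.
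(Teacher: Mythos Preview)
Your proposal is correct and follows essentially the same route as the paper's proof: both obtain the block-diagonal form $\Phi^{*}g = dt^{2} + h_{t}$ in collar coordinates (the paper by citing Schick's Proposition~2.8, you via the Gauss lemma for the normal exponential map) and read off that the radial curve $s\mapsto (x_{0},s)$ is length-minimizing. You are in fact more careful than the paper about competing curves that might leave the collar; the paper's proof simply asserts the conclusion ``immediately follows'' from the block form and does not address that case explicitly.
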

\begin{proof}
Fix some normal coordinates $(U,x^i)$ centered in $x_0$ in $\partial M$. The normal collar provides some fibered coordinates $(U \times [0, r_c), x^i, t)$ on $\pi^{-1}(U) \subset M$. Notice that, thanks to Proposition 2.8 of \cite{Schick}, we know the Gram matrix of $g$ with respect to $x^i$ and $t$ has the form
\begin{equation}
g(x,t) = \begin{bmatrix} g_{ij}(x,t) & 0 \\
0 & 1 \end{bmatrix}.
\end{equation}
It immediately follows that the curve $\sigma:[0,t_0] \longrightarrow \mathcal{N}(r_X) \subset M$ defined as $\sigma(t) = (0,t)$ is length minimizing.
\end{proof}
\subsection{Uniform maps quasi-isometric on the unbounded ends}
Let us recall the following definitions from Section 1 of \cite{Spes}. Fix two Riemannian manifolds $(X,g)$ and $(Y,h)$.
\begin{defn}
A map $f:(X,g) \longrightarrow (Y,h)$ is \textbf{uniformly continuous} if for each $\epsilon > 0$ there is a $\delta(\epsilon) >0$ such that for each $x_1$, $x_2$ in $X$
\begin{equation}
d_X(x_1, x_2) \leq \delta(\epsilon) \implies d_Y(f(x_1), f(x_2)) \leq \epsilon.
\end{equation}
Moreover $f$ is \textbf{uniformly (metrically) proper}, if for each $R\geq 0$ there is a number $S(R) >0$ such that for each subset $A$ of $(Y, d_Y)$
	\begin{equation}
	diam(A) \leq R \implies diam(f^{-1}(A)) \leq S(R).
	\end{equation}
We say that a map $f:(X, g) \longrightarrow (Y, h)$ is a \textbf{uniform map} if it is uniformly continuous and uniformly proper.
\end{defn}
\begin{defn}
	Two maps $f_0$ and $f_1: (X, d_X) \longrightarrow (Y,d_Y)$ are \textbf{uniformly homotopic} if they are homotopic with a uniformly continuous homotopy $H: (X\times[0,1], g + dt) \longrightarrow (Y, h)$. We will denote it by
	\begin{equation}
	f_1 \sim f_2.
	\end{equation}
\end{defn}
\begin{defn}
	A map $f:(X, g) \longrightarrow (Y, h)$ is a \textbf{uniform homotopy equivalence} if $f$ is uniformly continuous and there is map $s$ such that
	\begin{itemize}
		\item $s$ is a homotopy inverse of $f$,
		\item $s$ is uniformly continuous,
		\item $f\circ s$ is uniformly homotopic to $id_N$ and $s \circ f$ is uniformly homotopic to $id_M$.
	\end{itemize}
\end{defn}
In order to define the class of maps that we will study, we need to introduce the notion of \textit{quasi-isometry}. 
\begin{defn}
Let $(X,g)$ and $(Y,h)$ be two Riemannian manifolds. Then, a \textbf{quasi-isometry} is a local diffeomorphism $f:(X,g) \longrightarrow (Y,h)$ such that $f^*h$ and $g$ are \textbf{quasi-isometric} metric i.e. there is a constant $K \geq 1$ such that
\begin{equation}
    K^{-1}g \leq f^*h \leq Kg.
\end{equation}
\end{defn}
\begin{defn}
Let $(X = M \cup E_X,g)$ and $(Y = N \cup E_Y,h)$ be two manifolds of bounded geometry with unbounded ends. A map $f: (X,g) \longrightarrow (Y,h)$ is \textbf{quasi-isometric on the unbounded ends} if 
\begin{itemize}
    \item $f(E_X) \subseteq E_Y$, $f(M) \subset N$, $f(\partial M) \subseteq \partial N $,
    \item $f_{\vert_{E_X}}: E_X \longrightarrow E_Y$ is a quasi-isometry.
\end{itemize}
A \textbf{uniform homotopy equivalence isometric on the ends} is a uniform homotopy equivalence which is isometric on the unbounded ends. Finally we say that $f$ is a \textbf{uniform homotopy equivalence quasi-isometric on the ends} if $f_{\vert_{E_X}}$ is a quasi-isometry.
\end{defn}
\begin{rem}\label{rem1}
Fix a uniform homotopy equivalence isometric on the ends $f: (X,g) \longrightarrow (Y,h)$. Let $r_0 := \min\{r_X, r_Y\}$. Then $f(\mathcal{N}_X(r_0)) \subseteq \mathcal{N}_Y(r_0)$. Moreover, if we choose some collar coordinates $\{x^i, t\}$ on $\mathcal{N}_X(r_0)$ and $\{y^j, s\}$ on $\mathcal{N}_Y(r_0)$, then $f$ has the form
\begin{equation}
    f(x^i, t) = (g(x^i, t), t).
\end{equation}
This is a consequence of Lemma \ref{lemma0}.
\end{rem}
\begin{lem}\label{ban}
Let $f: X \rightarrow (Y,h)$ be a map between two manifolds. Let $g_1$ and $g_2$ be two quasi-isometric Riemannian metrics on $X$. Then $f: (X,g_1) \longrightarrow (Y,h)$ is a uniform map if and only if $f: (X,g_2) \longrightarrow (Y,h)$ is a uniform map.
\end{lem}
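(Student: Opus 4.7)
The statement is a routine change-of-metric argument, and the plan is to reduce it to the observation that quasi-isometric Riemannian metrics induce bi-Lipschitz equivalent distance functions on $X$, after which uniform continuity and uniform properness transfer automatically.

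First I would unwind the hypothesis: the quasi-isometry assumption gives a constant $K \geq 1$ with $K^{-1}g_1 \leq g_2 \leq K g_1$ as symmetric bilinear forms on each tangent space. Taking square roots pointwise on any tangent vector $v$ and integrating along an arbitrary piecewise smooth curve $\gamma:[0,1] \to X$ gives
\begin{equation*}
K^{-1/2}\, L_{g_1}(\gamma) \leq L_{g_2}(\gamma) \leq K^{1/2}\, L_{g_1}(\gamma),
\end{equation*}
where $L_{g_i}$ denotes length with respect to $g_i$. Infimizing over all such curves connecting two points $x,y \in X$ yields
\begin{equation*}
K^{-1/2}\, d_{g_1}(x,y) \leq d_{g_2}(x,y) \leq K^{1/2}\, d_{g_1}(x,y),
\end{equation*}
so $d_{g_1}$ and $d_{g_2}$ are bi-Lipschitz equivalent (with the convention that both distances are $+\infty$ between points in different connected components, in which case the inequality is trivial).

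Once this bi-Lipschitz equivalence is in hand, I would verify the two clauses of ``uniform map'' directly. For uniform continuity, assuming $f:(X,g_1) \to (Y,h)$ admits a modulus $\delta_1(\epsilon)$, it suffices to set $\delta_2(\epsilon) := K^{-1/2}\delta_1(\epsilon)$: if $d_{g_2}(x_1,x_2) \leq \delta_2(\epsilon)$ then $d_{g_1}(x_1,x_2) \leq K^{1/2} d_{g_2}(x_1,x_2) \leq \delta_1(\epsilon)$, whence $d_Y(f(x_1),f(x_2)) \leq \epsilon$; the reverse direction is symmetric. For uniform properness, if $f:(X,g_1) \to (Y,h)$ has bound $S_1(R)$ then, since the $g_2$-diameter of $f^{-1}(A)$ is at most $K^{1/2}$ times its $g_1$-diameter, the function $S_2(R) := K^{1/2} S_1(R)$ witnesses uniform properness of $f:(X,g_2) \to (Y,h)$, and again the reverse direction is symmetric.

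No single step is genuinely hard; the only mild subtlety is being careful that the inequalities between the infinitesimal metrics genuinely pass to the path-length distances, which is why I would spell out the curve-length calculation explicitly rather than quote it. Nothing in the argument uses any structure of $(Y,h)$, so the lemma holds for arbitrary target spaces.
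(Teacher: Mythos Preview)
Your proof is correct and follows essentially the same route as the paper: both establish that quasi-isometric metrics yield bi-Lipschitz equivalent path-length distances by comparing curve lengths and infimizing, after which the uniformity conditions transfer immediately. You spell out the modulus-of-continuity and diameter bounds more explicitly than the paper (which simply says ``the claim immediately follows'' after the distance comparison), but the underlying argument is identical.
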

\begin{proof}
Let $\gamma$ be a differentiable curve on $X$. Fix $i= 1, 2$ and denote by $L_i(\gamma)$ the length of $\gamma$ with respect to the metric $g_i$. Since $g_1$ and $g_2$ are quasi-isometric, we obtain that
there is a constant $K$, which does not depends on $\gamma$, such that
\begin{equation}
K^{-1} \cdot L_1(\gamma) \leq L_2(\gamma) \leq K \cdot L_1(\gamma).     
\end{equation}
So, this implies that for each couple of points $a$ and $b$ in $X$ we obtain
\begin{equation}
K^{-1} \cdot d_1(a,b) \leq d_2(a,b) \leq K \cdot d_1(a,b)     
\end{equation}
where $d_i$ is the distance induced by $g_i$. The claim immediately follows.
\end{proof}
\begin{prop}\label{blu}
Let $f:(X = M \cup E_X ,g) \longrightarrow (Y = N \cup E_Y,h)$ be a smooth uniform map which is quasi-isometric on the unbounded ends. Assume, moreover, that, in normal coordinates, $f_{\vert M}$ has uniformly bounded derivatives of each order. Then there is a metric $\tilde{g}$ on $X$ such that 
\begin{enumerate}
    \item $g$ and $\tilde{g}$ are quasi-isometric,
    \item $(\overline{M}, \tilde{g})$ is again a manifold with boundary of bounded geometry, 
    \item there is an open $E_Y$ of $Y$ such that $f: (X = M \cup E_X,\tilde{g}) \longrightarrow (Y = N \cup \tilde{E}_Y,h)$ is a uniform map which is isometric on the unbounded ends.
\end{enumerate}
\end{prop}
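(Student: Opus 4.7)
The plan is to construct $\tilde g$ by gluing two metrics: the pullback $f^{*}h$ near $\partial M$ and outside $\overline M$, and the original $g$ in the deep interior of $M$, using a smooth cutoff function supported in the collar $\mathcal{N}_X(r_X)$. The key point is that on this collar, which lies in $M \cap E_X$, both $g$ and $f^{*}h$ are genuine Riemannian metrics and are uniformly comparable by the quasi-isometry hypothesis, so any convex combination is again a Riemannian metric.

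Concretely, by Lemma \ref{lemma0} the collar $\mathcal{N}_X(r_X)$ carries the coordinate $t(p) = d(p, \partial M)$. Choose a smooth $\chi_0 : [0, r_X] \to [0,1]$ with $\chi_0 \equiv 1$ on $[0, r_X/3]$ and $\chi_0 \equiv 0$ on $[2r_X/3, r_X]$, and extend $\chi := \chi_0 \circ t$ to a globally smooth function on $X$ by declaring $\chi \equiv 1$ on $X \setminus \overline{M}$ and $\chi \equiv 0$ on $M \setminus \mathcal{N}_X(r_X)$. Define
\[
\tilde g := \chi \cdot f^{*}h + (1-\chi)\cdot g.
\]
On the set where $\chi > 0$, $f$ is a local diffeomorphism (this set lies in $E_X$, up to a harmless shrinking of the decomposition), so $f^{*}h$ is a positive-definite tensor there; in the transition shell where $0 < \chi < 1$, lying in $\mathcal{N}_X(r_X)$, both summands are positive definite and quasi-isometric, so $\tilde g$ is a smooth Riemannian metric on $X$. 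Item (1) is immediate: where $\chi = 0$ one has $\tilde g = g$, where $\chi = 1$ the bounds $K^{-1}g \le f^{*}h \le Kg$ carry over, and the convex combination inherits the same bounds on the shell (cf.\ Lemma \ref{ban}). For item (3), take $\tilde E_Y$ to be an open neighborhood of $f(\chi^{-1}(1))$ in $Y$; on $\chi^{-1}(1)$ one has $\tilde g = f^{*}h$, so $f$ is a local isometry there, and the uniform-map properties are preserved under the quasi-isometric change from $g$ to $\tilde g$.

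The main obstacle is item (2): showing that $(\overline M, \tilde g)$ retains bounded geometry. I would invoke the coordinate characterisation from the Remark following Definition \ref{open}, so that it suffices to produce uniform bounds on $\partial^\alpha \tilde g_{ij}$ and $\partial^\alpha \tilde g^{ij}$ in normal boundary and Gaussian coordinates of fixed size. Outside $\mathcal{N}_X(r_X)$ the bounds are inherited from $(\overline M, g)$. Inside the collar, the cutoff $\chi$ has uniformly bounded derivatives of every order, being built from $\chi_0$ and the distance to $\partial M$, whose derivatives in collar coordinates are controlled by Proposition 2.8 of \cite{Schick}. Uniform bounds on $\partial^\alpha (f^{*}h)_{ij}$ in those same coordinates come from combining the hypothesis that $f|_M$ has uniformly bounded derivatives of every order with the bounded geometry of $h$ on $N \supset f(M)$; the Leibniz rule then yields uniform bounds on $\partial^\alpha \tilde g_{ij}$, and the lower bound $\tilde g \ge K^{-1}g$ transfers the control to $\tilde g^{ij}$. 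The collar diffeomorphism and the injectivity-radius conditions are preserved up to constants depending on $K$, since $\tilde g$ coincides with $g$ away from $\mathcal{N}_X(r_X)$ and is quasi-isometric to $g$ throughout.
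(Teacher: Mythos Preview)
Your construction of $\tilde g$ as a convex combination of $g$ and $f^*h$ via a collar cutoff is exactly the paper's approach (with the roles of $\chi$ and $1-\chi$ swapped), and your arguments for items (1) and (3) are correct and match the paper's.

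The gap is in item (2). The Remark after Definition~\ref{open} (Theorem~2.5 of \cite{Schick}) only allows you to replace condition~4 of Definition~\ref{open} by coordinate bounds; conditions~1--3 (geodesic collar of definite width, positive boundary injectivity radius, uniform lower bound on the interior injectivity radius) must still be verified for $(\overline M,\tilde g)$ separately. Your sentence ``the collar diffeomorphism and the injectivity-radius conditions are preserved up to constants depending on $K$, since $\tilde g$ coincides with $g$ away from $\mathcal N_X(r_X)$ and is quasi-isometric to $g$ throughout'' is not a proof: a quasi-isometric change of metric does \emph{not} in general control the injectivity radius. There is also a circularity lurking in your coordinate argument, since Schick's criterion demands bounds in $\tilde g$-normal coordinates of fixed radius, whose very existence is condition~3.

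The paper fills this gap as follows. For conditions~1 and~2 it uses that on a definite collar of $\partial M$ one has $\tilde g = f^*h$, so $f$ restricts to an isometry onto a collar of $\partial N$ in $(N,h)$, and these conditions are then inherited from the bounded geometry of $(\overline N,h)$. For condition~3 the paper does real work: in $g$-normal coordinates the components $\tilde g_{ij}$ have uniformly bounded derivatives of all orders (by the hypothesis on $f|_M$ and bounded geometry of $N$), so by Lemma~3.4 of \cite{Schick} the derivatives of the entries of the Jacobian of the $\tilde g$-exponential map $\exp'_p$ are uniformly bounded; since $J_{\exp'_p}(0_p)=\mathrm{Id}$, a mean-value estimate gives a uniform radius $R$ on which $\exp'_p$ is a diffeomorphism, and the quasi-isometry constant converts this into a lower bound for $\mathrm{inj}_{(M,\tilde g)}$. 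For condition~4 the paper bounds the covariant derivatives of the Riemann tensor of $\tilde g$ directly (a tensorial quantity), rather than invoking the coordinate form, thus avoiding the circularity.
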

\begin{proof}
Let $\phi: \mathbb{R} \longrightarrow \mathbb{R}$ a smooth function such that $\phi \cong 1$ on $[1, +\infty)$, $\phi \cong 0$ on $(-\infty, 0]$ and $\phi(x) \in [0, 1]$ otherwise. Let us define
\begin{equation}
\chi(x) := \begin{cases}
 \phi(\frac{3}{r_0}d_g(\partial M, x) - 1) \mbox{    if    } x \in M \\
 0 \mbox{    otherwise.}
\end{cases}
\end{equation}
Notice that, with respect to the normal coordinates of $(X,g)$, the function $\chi$ has bounded derivatives of each order. This fact is a consequence of Theorem 2.5 of \cite{Schick} and Lemma \ref{lemma0}. Then we define the metric $\tilde{g}$ on $x \in X$ as
\begin{equation}
    \tilde{g}_x := \chi(x)g_x + (1-\chi(x)) f^*h_x.
\end{equation}
Observe that $\tilde{g}$ and $g$ are quasi-isometric. Indeed, since there is a $K$ such that 
\begin{equation}
    K^{-1} \cdot g \leq f^*h \leq K\cdot g,
\end{equation}
then
\begin{equation}
    \frac{1}{1+K} g_x \leq \tilde{g} \leq (1+K) g_{x}.
\end{equation}
Our next step is to prove that $(\overline{M}, \tilde{g})$ is a manifold with boundary of bounded geometry.
\\Let us check the conditions on Definition \ref{open}:
\begin{itemize}
    \item the conditions $1$ and  $2$ are satisfied. Indeed the image of the $\frac{r_0}{3}$-neighborhood of $\partial \overline{M}$ (with respect to $g$) by $f$ contains on a $\delta_0$-neighborhood of $\partial \overline{N}$. This implies that the $\delta_0$-neighborhood of $\partial \overline{M}$ (with respect to $\tilde{g}$) is isometric to the $\delta_0$-neighborhood of $\partial \overline{N}$. Then, since $\overline{N}$ is a manifold with boundary of bounded geometry, the first two conditions of Definition \ref{open} are satisfied.
    \item Condition $3.$ is satisfied. Since the $\delta_0$-neighborhood of $\partial M$ is isometric to the $\delta_0$-neighborhood of $\partial N$, we just have to check that the injectivity radius on $M$ is bounded from below.
    \\Let $p$ be a point on $M$, let $\{e_i\}$ be an orthonormal basis of $T_pX$ and fix some normal coordinates $\{U, x^i\}$ (with respect to $g$) referred to $p$ and $\{e_i\}$. We obtain the coordinates $\{x^i, \mu^j\}$ on $TX$, where $\{\mu^j\}$ are the components of $\frac{\partial}{\partial x^i}$.
    \\Observe that, with respect to these coordinates, the components of the metric $\tilde{g}_{ij}$ are uniformly bounded and the same holds for its derivatives of each order. This because $f$ and $\chi$ have uniformly bounded derivatives of each order with respect to the coordinates $\{x^i\}$ and because $M$ and $N$ have bounded geometry.
    \\Fix $s = \frac{inj_{M}}{K'}$ where $K'>1$ is a constant such that $\tilde{g} \leq K'\cdot g$. Notice that, on the ball\footnote{the radius is given with respect to $g$} $B_{s}(0_p) \subset T_pM$, the exponential map $exp_{g.p}: B_s(0_p) \longrightarrow M$ with respect to $g$ is injective. As a consequence of Picard-Lindel\"of Theorem we also know that the exponential map $exp'_{p}$ with respect to $\tilde{g}$ is well defined on the same ball $B_s(0_p)$.
\\We know that the Jacobian matrix $J_{exp'_{p}}(0_p)$ is the the identity. Moreover, we also know that the derivatives of the entries of $J_{exp'_p}$ are uniformly bounded by a constant $C$: this is a consequence of Lemma 3.4 of \cite{Schick}. So this means that there is a constant $D$ such that for each $v_p$ in $B_s(0_p)$ 
\begin{equation}
    \vert \vert J_{exp'_p}(v_p) - Id \vert \vert \leq D\cdot \vert \vert v_p \vert \vert .
\end{equation}
 Then, if $R = \min\{\frac{1}{2D}, s\}$ on $B_R(0_p)$ the exponential map $exp'_{p}$ is invertible. Then we can conclude by observing that $B_R(0_p)$ contains a ball of radius $\frac{R}{K}$ with respect to the metric $\tilde{g}$. Indeed this fact implies that 
\begin{equation}
    inj_{(M, \tilde{g})}(p) \geq \frac{\min\{\frac{1}{2D}, \frac{inj_{(M,g)}}{K'}\}}{K'}.
\end{equation}
    \item Condition $4.$ is satisfied. In particular the boundedness of the derivatives of the second fundamental form of $\partial M$ follows because $M$ is isometric to $N$ near $\partial M$. On the other hand, the boundedness of the covariant derivatives of the Riemann tensor of $\tilde{g}$ is a consequence of the fact that the norms induced by $g$ and $\tilde{g}$ on each fiber $F_x$ of a tensor multi-product of $TX$ and $T^*X$ are equivalent with some constants which does not depends on $x$, and that $\chi$ and the components of $f^*h$ have uniformly bounded derivatives in normal coordinates with respect to $g$. 
\end{itemize}
Finally, we can conclude the proof by observing that $f: (X,\tilde{g}) \longrightarrow (N,h)$ is a uniform map over $M$ (Lemma \ref{ban}) and it is an isometry on $(X \setminus M) \cup \mathcal{N}(\delta_0)$ where $\mathcal{N}(\delta_0)$ is the $\delta_0$-neighborhood of $\partial M$ in $M$ with respect to $\tilde{g}$. 
\end{proof}
\section{Minimal and maximal domains}
\subsection{The space of squared-integrable differential forms}
Let $(X,g)$ be an oriented Riemannian manifold and let $\Omega^k_{c}(X)$ be the space of complex differential forms with compact support. The Riemannian metric $g$ induces for every $k \in \mathbb{N}$ a scalar product on $\Omega^k_c(X)$ as follows: for each $\alpha$ and $\beta$ in $\Omega^*_{c}(X)$ then
	\begin{equation}
	\langle \alpha, \beta \rangle_{\mathcal{L}^2(X)} := \int\limits_{X} \alpha \wedge \star \bar{\beta},
	\end{equation}
	where $\star$ is the Hodge star operator induced by $g$.
	\\This hermitian product gives a norm on $\Omega^k_{c}(X)$:
	\begin{equation}
	\begin{split}
	\vert \alpha 	\vert^2_{\mathcal{L}^2(X)} :&= \langle\alpha, \alpha\rangle_{\mathcal{L}^2(X)} \\ 
	&= \int\limits_{M}\alpha \wedge \star \bar{\alpha} < +\infty.
	\end{split}
	\end{equation}
	\begin{defn}
		We will denote by $\mathcal{L}^2\Omega^k(X)$ the Hilbert space given by the closure of $\Omega^*_{c}(X)$ with respect to the norm $\vert \cdot \vert_{\mathcal{L}^2\Omega^k(X)}$. Moreover we can also define the Hilbert space $\mathcal{L}^2(X)$ as 
		\begin{equation}
		\mathcal{L}^2(X) :=   \bigoplus\limits_{k \in \mathbb{N}} \mathcal{L}^2\Omega^k(X).
		\end{equation}
		The norm of $\mathcal{L}^2(X)$ it will be denoted by $\vert \cdot \vert_{\mathcal{L}^2}$ or $\vert \cdot \vert_{\mathcal{L}^2(X)}$.
	\end{defn}
	\begin{rem}
		Since $\Omega_c^k(X)$ is dense in $\mathcal{L}^2\Omega^k(X)$, then $\Omega_c^*(X)$ is dense in $\mathcal{L}^2(X)$.
	\end{rem}
\begin{defn}
	Let $(X,g)$ be an oriented Riemannian manifold whose dimension is $m$. The \textbf{chirality operator} is defined as the operator $\tau_X: \mathcal{L}^2(X) \longrightarrow \mathcal{L}^2(X)$ such that, for each $\alpha$ in $\Omega_c^*(X)$, 
\begin{equation}
	\tau_X(\alpha) := i^{\frac{m}{2}} \star \alpha
	\end{equation}
if $m$ is even and
	\begin{equation}
	\tau_X(\alpha) := i^{\frac{m + 1}{2}} \star \alpha
\end{equation}
if $m$ is odd.
\end{defn}
\begin{rem}
The chirality operator is an $\mathcal{L}^2$-bounded operator. In particular, with respect to $ \vert \vert \cdot \vert \vert_{\mathcal{L}^2}$, it is an isometry and an involution and so we obtain that $\tau_X^2 =1$ and $\vert \vert \tau_X \vert \vert_{\mathcal{L}^2} = 1$. Finally, $\tau_X$ is also self-adjoint.
\end{rem}
\begin{defn}
Let $(X,g)$ and $(Y,h)$ be two Riemannian manifold. Fix $A: dom(A) \subset \mathcal{L}^2(X) \longrightarrow \mathcal{L}^2(Y)$ an operator and let $A^*: dom(A^*) \subset \mathcal{L}^2(Y) \longrightarrow \mathcal{L}^2(X)$ be its adjoint operator. Then we denote by $A^\dagger$ the operator defined as
\begin{equation}
    A^\dagger := \tau_X \circ A^* \circ \tau_Y.
\end{equation}
\end{defn}
\begin{rem}
Notice that if $\pi: (X,g) \longrightarrow (Y,h)$ is a submersion, then $(\pi^*)^\dagger = \pi_{star}$ which is the integration along the fibers of $\pi$. If $\omega$ is smooth form on a Riemannian manifold $(X,g)$ and if $e_{\omega}: \mathcal{L}^2(X) \longrightarrow \mathcal{L}^2(X)$ is defined as $e_\omega(\beta) = \beta \wedge \omega$, then $e_{\omega}^\dagger = r_\omega$ where $r_\omega(\beta) := \omega \wedge \beta$.
\end{rem}
\subsection{A regularizing operator}
In this subsection we will talk about $C^k$-forms over a manifold $X$. This are sections of class $C^k$ of the bundle $\Lambda^*(X)$. If $k >1$, then the differential of a $C^k$-form is defined locally in the usual way and it is a $C^{k-1}$-form.
\\Let $(X,g)$ be a Riemannian manifold and fix $\epsilon > 0$. Gol'dshtein and Troyanov in \cite{Gol} studied the de Rham regularizing operator $R_{\epsilon}^X$. In their paper they proved that, given $k>0$ in $\mathbb{N}$, then for each compactly supported $C^k$-form $\omega$ 
\begin{itemize}
    \item $R_{\epsilon}^X\omega$ is in $\Omega^*_c(X)$,
    \item $\lim\limits_{\epsilon \rightarrow + \infty} \vert\vert R_{\epsilon}^X\omega - \omega \vert\vert_{\mathcal{L}^2} = 0$.
    \item $R_{\epsilon}^Xd\omega = d R_{\epsilon}^X \omega$.
\end{itemize}
For the sake of completeness we would like briefly recall the definition and the proofs of the main properties of $R_{\epsilon}^X$.
\\The operator $R_{\epsilon}^X$ is defined as follows: let $n$ be the dimension of $X$ and fix a mollifier $\rho: \mathbb{R}^n \longrightarrow \mathbb{R}$. Then let $h: B_1(0) \subset \mathbb{R}^n \longrightarrow \mathbb{R}^n$ be a radial diffeomorphism such that $h(x) = x $ if $\vert\vert x\vert \vert \leq \frac{1}{3}$ and 
\begin{equation}
    h(x) = \frac{1}{\vert\vert x\vert \vert} exp(\frac{1}{1 - \vert\vert x\vert \vert^2})\cdot x \mbox{ if } \vert\vert x\vert \vert \geq \frac{2}{3}
\end{equation}
Then, we define the submersion $s: \mathbb{R}^n \times \mathbb{R}^n \longrightarrow \mathbb{R}^n$ as
\begin{equation}
    s(x,v) := \begin{cases} h^{-1}(h(x) + v) \mbox{if } \vert\vert x\vert \vert \leq 1 \\
    x \mbox{ otherwise.}
    \end{cases}
\end{equation}
Let $U$ be a bounded convex domain of $\mathbb{R}^n$ which contains the ball $B_1(0)$. Then, for each $\epsilon > 0$, the \textit{local} regularizing operator $R_{\epsilon}: \mathcal{L}^2(U) \longrightarrow \mathcal{L}^2(U)$ is defined as
\begin{equation}
    R_{\epsilon}\omega := \int_{\mathbb{R}^n} s^*\omega \wedge \rho_\epsilon(v) dv^1 \wedge \ldots \wedge dv^n,
\end{equation}
where $\rho_\epsilon(v) := \rho(\frac{v}{\epsilon})$. In order to obtain the global regularizing operator we need to fix a constant $J$ and consider a countable atlas $\{(V_i, \phi_i)\}$ of $M$ such that for each $i$ there are at most $K$ charts $(V_j, \phi_j)$ such that $V_i \cap V_j \neq \emptyset$. Assume that $ B_1(0) \subseteq \phi_i(V_i) \subset \mathbb{R}^n$ for each $i$ and that $\{\phi_i^{-1}(B_1(0))\}$ is again a cover of $M$. Then we define for each $k$ in $\mathbb{N}$,
\begin{equation}
R_{\epsilon, k} := R_{\epsilon, V_0} \circ \ldots \circ  R_{\epsilon, V_k}
\end{equation}
where $R_{\epsilon, V_i} := \phi_i^* \circ R_{\epsilon} \circ [\phi_i^{-1}]^*$ on $V_i$ and it is the identity outside $V_i$. Then we can define the operator $R^X_{\epsilon} := \lim\limits_{k \rightarrow +\infty}R_{\epsilon, k}.$
\bigskip
\\It is a well known fact that if $\omega$ is a $C^k$-form, then $R^X_{\epsilon}\omega$ is a smooth differential form (\cite{Gol}). Essentially this follows because $R_\epsilon$ is a smoothing operator: the components of its kernel $K(x,y) = k(x,y)_J^Idx^J\boxtimes \frac{\partial}{\partial y^I}$ are given by the components of the form $t^*(\rho_\epsilon(v) dv^1 \wedge \ldots \wedge dv^n)$ where $t$ is the inverse map of $S: \mathbb{R}^n \times \mathbb{R}^n \longrightarrow \mathbb{R}^n \times \mathbb{R}^n$ which is defined for each $(x,y)$ as $S(x,y) := (x, s(x,y))$.
\footnote{Actually $S$ is a diffeomorphism only on $\mathbb{R}^n \times B_1(0)$. However the components of $K$ are given by the components of the form $t^*(\rho(v) dv^1 \wedge \ldots \wedge dv^n)$ on $S(\mathbb{R}^n \times B^1(0))$ and zero outside. The components of $K$ are smooth since the support of $\rho(v) dv^1 \wedge \ldots \wedge dv^n$ is contained in $\mathbb{R}^n \times B^1(0)$.}
\\For the same reason is quite easy to show that if $\omega$ is a compactly supported form, then the same happens to $R^X_\epsilon \omega$. In particular, if $\omega = f_I(x)dx^I$ is a differential form whose support is contained on $B_1(0) \subset \mathbb{R}^n$, then $R_\epsilon\omega = f_{I,\epsilon}(x)dx^I$ where $\vert f_I(x) - f_{I,\epsilon}(x) \vert \leq C_{\epsilon, f_I}$ and $C_{\epsilon, f_I}$ goes to zero if $\epsilon$ goes to zero. In other words if $\epsilon = \frac{1}{n}$ then the components $f_{I,\frac{1}{n}}$ uniformly approximate $f_I$.
This, in particular, implies that
\begin{equation}
    \vert \vert R_\epsilon\omega - \omega \vert \vert_{\mathcal{L}^2(B_1(0))} \rightarrow 0
\end{equation}
when $\epsilon \rightarrow 0$. Then for each compactly supported differential form $\alpha$ on a Riemannian manifold $(X,g)$, immediately follows that
\begin{equation}
    \lim \limits_{\epsilon \rightarrow 0} \vert \vert R^X_\epsilon\alpha - \alpha \vert \vert_{\mathcal{L}^2(X)} = 0.
\end{equation}
Finally, $R_\epsilon$ and the exterior derivative operator commute on $C^k_c(\Lambda^*T^*\mathbb{R}^n)$. Indeed $d$ and $s^*$ commute, $d(\alpha) \wedge \phi(v)dv^1\wedge \ldots \wedge dv^n = d(\alpha \wedge \phi(v)dv^1\wedge \ldots \wedge dv^n)$ since $\phi(v)dv^1\wedge \ldots \wedge dv^n$ is a closed form and finally the integral along $\mathbb{R}^n$ and the exterior derivative operator commute on the space of vertically compactly supported smooth forms on $\mathbb{R}^n \times \mathbb{R}^n$ (Proposition 6.14.1. of the book of Bott and Tu \cite{bottu}). As a consequence of this, also $R^X_\epsilon$ commute with the exterior derivative operator on $\Omega^*(X)$.
\subsection{Maximal and minimal domains of the exterior derivative operator}
Let $(X,g)$ be a Riemannian manifold. The exterior derivative operator $d: \Omega^*_c(X) \longrightarrow \Omega^*_c(X)$ can be seen as an unbounded operator with respect to the $\mathcal{L}^2$-norm on $(X,g)$. So we can define some different closures $\overline{d}: dom(\overline{d}) \subset \mathcal{L}^2(X) \longrightarrow \mathcal{L}^2(X)$ of $(d, \Omega^*_c(X))$. In this manuscript we will concentrate on the maximal and the minimal closure of $d$.
\begin{defn}
The \textbf{minimal domain} of the exterior derivative on $(X,g)$ is defined as the subset $dom(d_{min})$ of $ \mathcal{L}^2(X)$ given by the form $\alpha$ such that there is a sequence of $\{\omega_k\} \subset \Omega^*_c(X)$ such that $\lim\limits_{k \rightarrow +\infty}\vert \vert \omega_k -\alpha \vert \vert_{\mathcal{L}^2(X)} = 0$ and the sequence of $\{d\omega_k\}$ converges in $\mathcal{L}^2(X)$ with respect to the $\mathcal{L}^2$-norm.
\\Then we can define $d_{min}(\alpha) := \lim\limits_{k \rightarrow +\infty} d\omega_k$.
\end{defn}
\begin{defn}
The \textbf{maximal domain} of the exterior derivative on $(X,g)$ is defined as the subset $dom(d_{max})$ of $ \mathcal{L}^2(X)$ given by the $k$-forms $\alpha$ such that there a $\mathcal{L}^2$-form $\eta_\alpha$ such that for each $\beta$ in $\Omega_c^*(X)$
\begin{equation}
    \int_X \alpha \wedge d\beta = (-1)^{k +1} \int_X \eta_\alpha \wedge \beta.
\end{equation}
Then we can define $d_{max}(\alpha) := \eta_\alpha$.
\end{defn}
A well-known result about the minimal and the maximal closure of $d$, is the following. Let $(\overline{d}, dom(\overline{d}))$ be a closed extension of $d$ on $\mathcal{L}^2(X)$, then
\begin{equation}
(dom(d_{min}), d_{min}) \subseteq (\overline{d}, dom(\overline{d})) \subseteq (dom(d_{max}), d_{max}).
\end{equation}
Moreover Gaffney in \cite{Gaf} proved that if $(X,g)$ is a complete Riemannian manifold, then
$dom(d_{min}) = dom(d_{max})$ and so there is just one closed extension of $d$.
\begin{lem}
Let $(X,g)$ be a Riemannian manifold and let $\alpha \in dom(d_{max})$. Then, for each $k \geq 1$ and for each $\beta$ in  $C^k_c(\Lambda^*T^*M)$, we have
\begin{equation}
    \int_{M} \alpha \wedge d\beta = (-1)^k \int_M d_{max}\alpha \wedge \beta.
\end{equation}
\end{lem}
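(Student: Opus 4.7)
The plan is to extend the defining integration-by-parts identity of $d_{max}$, which a priori holds only for smooth compactly supported test forms, to the wider class of $C^k_c$-forms. The natural tool for this is the de Rham regularizing operator $R^X_{\epsilon}$ recalled in the previous subsection, whose three key properties are tailor-made for the situation: applied to any compactly supported $C^k$-form with $k\geq 1$, $R^X_\epsilon\omega$ lies in $\Omega^*_c(X)$, satisfies $R^X_\epsilon\omega \to \omega$ in $\mathcal{L}^2(X)$ as $\epsilon\to 0$, and commutes with the exterior derivative, i.e.\ $dR^X_\epsilon\omega = R^X_\epsilon d\omega$.

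Concretely, given $\beta\in C^k_c(\Lambda^*T^*M)$ with $k\geq 1$, I would set $\beta_\epsilon := R^X_\epsilon \beta$. Each $\beta_\epsilon$ lies in $\Omega^*_c(X)$, so the defining property of $d_{max}$ applied to $\alpha$ tested against $\beta_\epsilon$ gives
\begin{equation*}
\int_X \alpha\wedge d\beta_\epsilon \;=\; (-1)^{k+1}\int_X d_{max}\alpha\wedge\beta_\epsilon,
\end{equation*}
with the sign taken from the definition. The commutation $d\beta_\epsilon = R^X_\epsilon(d\beta)$, which needs only $C^1$-regularity of $\beta$, rewrites the left-hand side as $\int_X \alpha\wedge R^X_\epsilon(d\beta)$. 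Since $\beta$ is continuous and compactly supported, and $d\beta$ is a $C^{k-1}_c$-form (well defined because $k\geq 1$), both $\beta$ and $d\beta$ belong to $\mathcal{L}^2(X)$, and the recalled approximation property yields $R^X_\epsilon\beta\to\beta$ and $R^X_\epsilon(d\beta)\to d\beta$ in $\mathcal{L}^2(X)$. Combining this with $\alpha, d_{max}\alpha\in\mathcal{L}^2(X)$ and Cauchy--Schwarz, passing to the limit $\epsilon\to 0$ in both sides of the identity gives the claim.

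The delicate point, and the only reason the hypothesis $k\geq 1$ is needed, is the commutation $dR^X_\epsilon=R^X_\epsilon d$ on a form that is only $C^k$ rather than smooth: this is precisely what fails when $k=0$, since one needs to differentiate $\beta$ once under the integral defining $R^X_\epsilon\beta$. Granting this from the explicit formula for $R^X_\epsilon$ (integration of the fibered pullback $s^*\beta$ against a smooth mollifier, which classically commutes with $d$ as soon as $\beta$ is $C^1$), the rest of the argument is an essentially formal passage to the limit, so no additional geometric input on $(X,g)$ is required.
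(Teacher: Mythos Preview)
Your argument is correct and shares the paper's core idea: approximate $\beta$ by the smooth compactly supported forms $R^X_\epsilon\beta$, use the defining identity of $d_{max}$ against those, and pass to the limit. The difference is in how the limit is taken. The paper localizes to a single chart, writes the integrand in coordinates, shows that the coefficient functions $|\alpha_I|$ lie in $L^1$ on the compact chart (via the $L^2$ bound and compactness), and then invokes the Dominated Convergence Theorem using the uniform convergence of the coefficients of $R^X_{1/n}d\beta$ to those of $d\beta$. Your route is shorter: you observe that $\int_X \alpha\wedge\gamma$ is continuous in $\gamma$ for the $\mathcal{L}^2$-norm (since $\star$ is an $\mathcal{L}^2$-isometry, this pairing is bounded by $\|\alpha\|_{\mathcal{L}^2}\|\gamma\|_{\mathcal{L}^2}$), and then use the already-recorded $\mathcal{L}^2$-convergence $R^X_\epsilon\beta\to\beta$ and $R^X_\epsilon d\beta\to d\beta$ directly. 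This avoids the coordinate computation and the DCT entirely; the only point to make explicit is that the $\mathcal{L}^2$-convergence of $R^X_\epsilon\omega\to\omega$ also holds for merely continuous compactly supported $\omega$ (needed when $k=1$ so that $d\beta$ is only $C^0$), which follows from the uniform convergence of coefficients recorded in the subsection on $R^X_\epsilon$.
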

\begin{proof}
Let us denote by $R_\epsilon^X$ the de Rham regularizing operator. We can assume, without loss of generality, that there is a $j$ such that $supp(\beta)$ is contained in $\phi_j^{-1}(B_1(0)) \subseteq V_j$. Here $\{(V_i, \phi_i)\}$ is the atlas used to define $R_\epsilon^X$. Observe that $\alpha \wedge R^X_{\frac{1}{n}}\beta$ converges point-wise to $\alpha \wedge \beta$. 
\\Notice that on $V_i$, the form $\alpha$ has the form $\alpha(x) = \alpha_I(x)dx^I$ such that
\begin{equation}
\int_{V_{i}} g^{IJ}(x) \alpha_I(x) \alpha_J(x) dx^1 \wedge \ldots \wedge dx^n
\end{equation}
is bounded. Here the capital letter $I$ is a multi-index defined unless double switching. Moreover, we also denote by $N_I$ the multi-index such that $(I,N-I) = (1, \ldots, n)$. Again $N_I$ is defined unless to double switching. So the integral of $\alpha \wedge d\beta$ has the form $\int_{\phi_i^{-1}(B_1(0))} \alpha_I \cdot  (d\beta)_{N-I} dx^1 \wedge \ldots \wedge dx^n$. Notice that $\vert \alpha_I \cdot  (d\beta)_{N-I} \vert \leq C_\beta \cdot \vert \alpha_I \vert$. This means that if $\vert \alpha_I \vert $ is in $L^1(\phi_j^{-1}(B_1(0)))$ then we could apply the Dominated Convergence Theorem to the sequence $\alpha_I \cdot  \lim\limits_{k \rightarrow +\infty} (dR^X_{\frac{1}{k}}\beta)_{N-I}$.
\\Observe that $\vert \alpha_I \vert$ is in $L^2(\phi_j^{-1}(B_1(0)))$. Indeed 
\begin{equation}
\begin{split}
    \int_{\phi_j^{-1}(B_1(0))} \vert \alpha_I \vert^2 dx^1 \wedge \ldots \wedge dx^n &\leq \frac{1}{m_j} \int_{\phi_j^{-1}(B_1(0))} m_j \vert \alpha_I \vert^2 dx^1 \wedge \ldots \wedge dx^n \\
    &\leq \frac{1}{m_j} \int_{\phi_j^{-1}(B_1(0))} g^{IJ} \alpha_I \alpha_J dx^1 \wedge \ldots \wedge dx^n \\
    &\leq C_j
\end{split}
\end{equation}
where $m_i$ is the infimum of the eigenvalues of $g$ on the closure of $V_i$. So, since $\phi_j^{-1}(B_1(0))$ is compact, then $\vert \alpha_I \vert$ is also an $L^1$ function. This means that, thanks to the Dominated Convergence Theorem,
\begin{equation}
    \begin{split}
        \int_{V_i} \alpha \wedge d\beta &= \int_{V_i}\alpha_I \cdot  (d\beta)_{N-I} dx^1 \wedge \ldots \wedge dx^n\\ &= \int_{V_i}\alpha_I \cdot  \lim\limits_{k \rightarrow +\infty} (dR^X_{\frac{1}{k}}\beta)_{N-I} dx^1 \wedge \ldots \wedge dx^n\\
        &= \int_{V_i}\lim\limits_{k \rightarrow +\infty} \alpha_I \cdot (dR^X_{\frac{1}{k}}\beta)_{N-I} dx^1 \wedge \ldots \wedge dx^n)\\
        &= \lim\limits_{k \rightarrow +\infty} \int_{V_i} \alpha \wedge dR^x_{\frac{1}{k}}\beta.
    \end{split}
\end{equation}
Observe that $dR^X_{\frac{1}{k}}\beta$ is a compactly supported smooth form: this means that
\begin{equation}
    \int_{V_i} \alpha \wedge d\beta = \lim\limits_{k \rightarrow +\infty} \int_{V_i} d\alpha \wedge R^X_{\frac{1}{k}}\beta.
\end{equation}
So, by applying again the Dominated Convergence Theorem, we conclude.
\end{proof}
\begin{cor}\label{corollary1}
Let $(X,g)$ and $(Y,h)$ be two Riemannian manifolds. Fix an $\mathcal{L}^2$-bounded operator $A: \Omega^*_c(Y) \longrightarrow C^k_c(\Lambda^*(T^*X))$ for some $k > 1$. Suppose that $A^\dagger: \Omega^*_c(X) \longrightarrow C^k_c(\Lambda^*(T^*Y))$. Finally assume that, for each smooth form $\omega$
\begin{equation}
dA\omega \pm Ad \omega = B\omega
\end{equation}
where $B\omega$ is a $\mathcal{L}^2$-bounded operator. Then $A(dom(d_{min})) \subseteq dom(d_{min})$ and $A(dom(d_{max})) \subseteq dom(d_{max})$.
\end{cor}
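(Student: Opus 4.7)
The plan is to treat the two inclusions separately; the minimal case proceeds by approximation, the maximal case by duality through the preceding lemma.

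\textbf{Minimal domain.} Given $\alpha \in dom(d_{min}(Y))$ with an approximating sequence $\omega_n \in \Omega^*_c(Y)$ such that $\omega_n \to \alpha$ and $d\omega_n \to d_{min}\alpha$ in $\mathcal{L}^2(Y)$, the $\mathcal{L}^2$-boundedness of $A$ and $B$ gives $A\omega_n \to A\alpha$ and
\[
    dA\omega_n = \pm A\, d\omega_n + B\omega_n \longrightarrow \pm A(d_{min}\alpha) + B\alpha
\]
in $\mathcal{L}^2(X)$. Since $A\omega_n \in C^k_c(X)$ is only of class $C^k$, I would apply the de Rham regulariser $R^X_{\epsilon}$ of Section 2.2, which sends compactly supported $C^k$-forms into $\Omega^*_c(X)$ and commutes with $d$ on such forms. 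A diagonal choice of $\epsilon_n \to 0$ yields $\eta_n := R^X_{\epsilon_n} A\omega_n \in \Omega^*_c(X)$ with $\eta_n \to A\alpha$ and $d\eta_n = R^X_{\epsilon_n} dA\omega_n \to \pm A(d_{min}\alpha) + B\alpha$, witnessing $A\alpha \in dom(d_{min}(X))$. The hypothesis $k > 1$ ensures that $dA\omega_n$ is compactly supported of class at least $C^1$, so the $\mathcal{L}^2$-convergence of $R^X_{\epsilon_n}$ on these forms is covered by Section 2.2.

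\textbf{Maximal domain.} Fix $\alpha \in dom(d_{max}(Y))$ and a test form $\beta \in \Omega^*_c(X)$. The identity $A^* = \tau_Y A^\dagger \tau_X$ together with the $\mathcal{L}^2$-adjoint formula combine, up to signs coming from $\star$, into
\[
    \int_X A\alpha \wedge d\beta = \pm \int_Y \alpha \wedge A^\dagger(d\beta),
\]
and the hypothesis $A^\dagger : \Omega^*_c(X) \to C^k_c(Y)$ makes both $A^\dagger \beta$ and $A^\dagger(d\beta)$ compactly supported $C^k$-forms on $Y$. Daggering the given commutation $dA = \pm Ad + B$ and using that $d^\dagger$ is a degree-wise scalar multiple of $d$ on smooth forms (since $\tau$ is proportional to $\star$ and $d^* = \pm \star d\, \star$) produces on $\Omega^*_c(X)$ an identity
\[
    A^\dagger d = \pm d A^\dagger + \tilde B
\]
with $\tilde B$ an $\mathcal{L}^2$-bounded operator proportional to $B^\dagger$. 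Substituting this and applying the preceding lemma to $\alpha$ paired with the $C^k_c$-form $A^\dagger\beta$ on $Y$ gives
\[
    \int_X A\alpha \wedge d\beta = \pm \int_Y d_{max}\alpha \wedge A^\dagger\beta \pm \int_Y \alpha \wedge \tilde B\beta.
\]
Transferring both right-hand integrals back to $X$ via the adjoint formula (applied once to $A$ and once to $\tilde B$) exhibits an $\mathcal{L}^2$-form $\eta = \pm A(d_{max}\alpha) \pm \tilde B^* \alpha$ such that $\int_X A\alpha \wedge d\beta = \pm \int_X \eta \wedge \beta$ for every $\beta \in \Omega^*_c(X)$, which by definition is exactly $A\alpha \in dom(d_{max}(X))$.

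The main obstacle is the careful bookkeeping of the signs and degree-dependent constants from $\tau_X$, $\tau_Y$, the Hodge star, and Stokes' theorem, particularly in the derivation of the auxiliary identity $A^\dagger d = \pm d A^\dagger + \tilde B$ and in the two passages through the adjoint formula. Once these signs are organised, the structural scheme above assembles into the corollary.
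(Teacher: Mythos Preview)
Your proposal is correct and follows essentially the same route as the paper: the minimal case via approximation plus the de Rham regulariser $R^X_\epsilon$ with a diagonal choice of parameters, and the maximal case via duality, the daggered commutation identity, and the preceding lemma allowing $C^k_c$ test forms. The only cosmetic difference is that the paper proves the maximal case first and then, in the minimal case, invokes the already-established identity $d_{max}A = \mp A d_{max} + B$ when computing $dR^X_\epsilon A\beta_n$, whereas you work directly from the hypothesis on smooth forms; your ordering is slightly more self-contained but otherwise equivalent.
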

\begin{proof}
Let $\alpha$ be a $k$-form of $dom(d_{maxY})$: there is an $\mathcal{L}^2$-form $d\alpha$ such that, for each $\beta$ in $\Omega^*_c(Y)$,
\begin{equation}
    \int_{Y} d\alpha \wedge \beta = (-1)^k \int_Y \alpha \wedge d\beta.
\end{equation}
Let us concentrate on $A \alpha$. Let $\gamma$ be a form in $\Omega^*_c(X)$. We obtain that
\begin{equation}
    \begin{split}
    \int_X A \alpha \wedge d\gamma &= \langle A \alpha, \tau_X d\gamma \rangle_X\\
    &= \langle \alpha, A^* \tau_X d\gamma \rangle_Y\\
    &= \langle \alpha, \tau_Y A^\dagger \tau_X \tau_X d\gamma \rangle_Y\\
    &= \langle \alpha, \tau_Y A^\dagger d\gamma \rangle_Y\\
    &= (-1)^k \langle \alpha, \tau_Y A^\dagger d^\dagger \gamma \rangle_Y\\
    &=(-1)^k \langle \alpha, \tau_Y  (d^\dagger \mp A^\dagger + B^\dagger) \gamma \rangle_Y\\
    &=\langle \alpha, \tau_Y (d \mp A^\dagger + B^\dagger) \gamma \rangle_Y\\
    &= \langle d_{max}\alpha, \tau_Y \mp A^\dagger \gamma \rangle + \langle B \alpha, \tau_Y \gamma \rangle\\
    &= \langle (\mp Ad_{max} + B)\alpha, \gamma \rangle.
    \end{split}
\end{equation}
So, this means that $A(dom(d_{max})) \subseteq dom(d_{max})$ and $d_{max}A = \mp Ad_{max} + B$.
\bigskip
\\Let $\beta$ be an element in $dom(d_{minY})$. Then there is a sequence $\{\beta_n\}$ such that $\beta = \lim\limits_{n\rightarrow +\infty} \beta_n$ and $d\beta = \lim\limits_{n\rightarrow +\infty} d\beta_n$ with respect to the $\mathcal{L}^2$-norm. Since $A$ is a $\mathcal{L}^2$-bounded operator, we obtain that
\begin{equation}
    A \beta = \lim\limits_{n \rightarrow + \infty} A \beta_n.
\end{equation}
Fix $n \in \mathbb{N}$ and let $K(n)$ be a number such that
\begin{equation}
\vert \vert A \beta - A \beta_{K(n)} \vert \vert_{\mathcal{L}^2} \leq \frac{1}{2n}
\end{equation}
and 
\begin{equation}
\vert \vert A d \beta - A d\beta_{K(n)}\vert \vert_{\mathcal{L}^2} \leq \frac{1}{2n}.
\end{equation}
Then let $\epsilon(n)$ be small enough to
\begin{equation}
\vert \vert A \beta_{K(n)} - R_{\epsilon(n)}^XA \beta_{K(n)} \vert \vert_{\mathcal{L}^2} \leq \frac{1}{2n}
\end{equation}
and
\begin{equation}
\vert \vert A d\beta_{K(n)} - R_{\epsilon(n)}^XA d\beta_{K(n)} \vert \vert_{\mathcal{L}^2} \leq \frac{1}{2n}.
\end{equation}
So let $\{\gamma_n\}$ be the sequence defined as $R_{\epsilon(n)}^XA \beta_{K(n)}$ for each $n$. This is a sequence of compactly supported smooth forms which converges, with respect to the $\mathcal{L}^2$-norm, to $A \beta$. We just have to show that $dR_{\epsilon(n)}^X A \beta_{K(n)}$ converges to an element of $\mathcal{L}^2(X)$.
\\Observe that
\begin{equation}
\begin{split}
d R_{\epsilon(n)}^X A \beta_{K(n)} &= R_{\epsilon(n)}^X d_{max,X} A \beta_{K(n)}\\
&= R_{\epsilon(n)}^X (\mp A d_{max,X} + B)\beta_{K(n)}\\
&= R_{\epsilon(n)}^X (\mp A d + B) \beta_{K(n)}.
\end{split}
\end{equation}
This means that 
\begin{equation}
    \begin{split}
    \vert \vert A d \beta - R_{\epsilon(n)}^X (\mp A d + B) \beta_{K(n)} \vert \vert_{\mathcal{L}^2} &\leq 
    \vert \vert A d \beta - A d\beta_{K(n)}\vert \vert_{\mathcal{L}^2}\\ &+ \vert \vert A d\beta_{K(n)} - R_{\epsilon(n)}^X(\mp A d + B)\beta_{K(n)} \vert \vert_{\mathcal{L}^2}\\
    &\leq \frac{1}{2n} + \frac{1}{2n} = \frac{1}{n}.
    \end{split}
\end{equation}
So we just proved that $A(dom(d_{minY})) \subseteq dom(d_{minX})$ and $d_{min} A = \mp A d_{min} + B.$
\end{proof}
\subsection{Reduced and unreduced cohomologies}
Let $(X,g)$ be a Riemannian manifold. It is a well-known fact that 
\begin{equation}
d_{min \backslash max}(dom(d_{min \backslash max}))\subseteq dom(d_{min \backslash max})
\end{equation}
and that $d_{min \backslash max}^2 := d_{min \backslash max} \circ d_{min \backslash max} = 0$. So, we can define the cohomology groups of $L^2$-cohomology as follows.
\begin{defn}
The \textbf{$k$-th group of minimal $L^2$-cohomology} is the group defined as 
\begin{equation}
H^k_{2, min}(X,g) := \frac{ker(d_{min}^k)}{im(d^{k-1}_{min})}.
\end{equation}
Moreover the \textbf{$k$-th group of reduced minimal $L^2$-cohomology} is given by
\begin{equation}
\overline{H}^k_{2, min}(X,g) := \frac{ker(d_{min}^k)}{\overline{im(d^{k-1}_{min})}}.
\end{equation}
where $d^k_{min}$ is the operator $d^k_{min}$ defined on $dom(d_{min}) \cap \mathcal{L}^2(\Omega^k(X))$.
\end{defn}
\begin{defn}
The \textbf{$k$-th group of maximal $L^2$-cohomology} is the group defined as 
\begin{equation}
H^k_{2, max}(X,g) := \frac{ker(d_{max}^k)}{im(d^{k-1}_{max})}.
\end{equation}
Moreover the \textbf{$k$-th group of reduced maximal $L^2$-cohomology} is given by
\begin{equation}
\overline{H}^k_{2, max}(X,g) := \frac{ker(d_{max}^k)}{\overline{im(d^{k-1}_{max})}}.
\end{equation}
where $d^k_{max}$ is the operator $d^k_{max}$ defined on $dom(d_{max}) \cap \mathcal{L}^2(\Omega^k(X))$.
\end{defn}
In general, for each $k \in \mathbb{N}$, the groups $H^k_{2, min}(X,g)$, $\overline{H}^k_{2, min}(X,g)$, $H^k_{2, max}(X,g)$ and $\overline{H}^k_{2, max}(X,g)$ can be different. 
\\On the other hand, we know that if $(X,g)$ is a complete Riemannian manifold then $d_{min} = d_{max}$. In this case, we obtain
\begin{equation}
    H^k_{2, min}(X,g) = H^k_{2, max}(X,g) \mbox{   and    } \overline{H}^k_{2, min}(X,g) = \overline{H}^k_{2, max}(X,g).
\end{equation}
\begin{rem}
If two Riemannian manifolds $(X,g)$ and $(Y,h)$ are quasi-isometric manifolds, then their $L^2$-cohomology groups (minimal or maximal, reduced or unreduced) are isomorphic.
\end{rem}
\section{R.-N.-Lipschitz maps and vector bundles}
\subsection{Radon-Nikodym-Lipschitz maps}
In this Subsection we will recall some notions introduced in subsections 2.3. and 2.4 of \cite{Spes}. This notions will be useful in order to understand when the pullback along a map between Riemannian manifolds induces a $\mathcal{L}^2$-bounded operator.
\\Let $(M, \nu)$ and $(N, \mu)$ be two measured spaces and let $f: (M, \nu) \longrightarrow (N, \mu)$ be a function such that the pushforward measure $f_\star(\nu)$ is absolutely continuous with respect to $\mu$.
\begin{defn}
	Let $(N, \mu)$ be $\sigma$-finite, then the \textbf{Fiber Volume of } $f$ is the Radon-Nikodym derivative
	\begin{equation}
	Vol_{f, \nu, \mu} := \frac{\partial f_\star \nu}{\partial \mu}.
	\end{equation}
\end{defn}
Let $(M, d_M, \mu_M)$ and $(N, d_N, \mu_N)$ be two measured and metric spaces. 
\begin{defn}
	A map $f: (M, d_M, \mu_M) \longrightarrow (N, d_N, \mu_N)$ is \textbf{Radon-Nikodym-Lipschitz} or \textbf{R.-N.-Lipschitz} if
	\begin{itemize}
		\item $f$ is Lipschitz
		\item $f$ has a well-defined and bounded Fiber Volume.
	\end{itemize}
\end{defn}
\begin{rem}
As showed in Remark 12 of \cite{Spes}, an equivalent definition is the following. A map $f: (M, d_M, \nu_M) \longrightarrow (N, d_N, \mu_N)$ is a \textbf{R.-N.-Lipschitz map} if it is Lipschitz and there is a constant $C$ such that, for each measurable set $A \subseteq N$, 
	\begin{equation}\label{follow}
	\mu_M(f^{-1}(A)) \leq C \mu_N(A).
	\end{equation}
So this implies that a composition of R.-N.-Lipschitz maps is a R.-N.-Lipschitz map.
\end{rem}
The main property of R.-N.-Lipschitz map is proved in Proposition 2.4 of \cite{Spes}.
\begin{prop}
	Let $(M,g)$ and $(N,h)$ be Riemannian manifolds. Let $f:(M,g) \longrightarrow (N,h)$ be a R.-N.-Lipschitz map. Then $f^*$, which is the pullback along $f$, is an $\mathcal{L}^2$-bounded operator.
\end{prop}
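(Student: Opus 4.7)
The plan is to establish the $\mathcal{L}^2$-boundedness of $f^*$ by a standard pointwise-then-integrate argument, combining the Lipschitz hypothesis (which controls the action of $f^*$ on covectors) with the Radon-Nikodym hypothesis (which controls the change-of-variables factor). I would first reduce to showing the estimate $\|f^*\omega\|_{\mathcal{L}^2(M)}\leq C'\|\omega\|_{\mathcal{L}^2(N)}$ on the dense subspace $\Omega^*_c(N)$; the claim then follows by taking the continuous extension.

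The first step is pointwise. Since $f$ is Lipschitz, by Rademacher's theorem the differential $df_x : T_xM \to T_{f(x)}N$ exists for almost every $x\in M$, and its operator norm is bounded by the Lipschitz constant $L$ of $f$. Consequently, for $\omega\in\Omega^k_c(N)$, the induced map on $k$-covectors satisfies $|f^*\omega|_g(x) \leq L^k |\omega|_h(f(x))$ for a.e.\ $x$. Squaring and integrating against the Riemannian volume $\mu_g$ gives
\begin{equation}
\|f^*\omega\|^2_{\mathcal{L}^2(M)} \;\leq\; L^{2k}\int_M |\omega|^2_h(f(x))\,d\mu_g(x).
\end{equation}

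The second step is the change of variables. By the definition of the pushforward measure, the right-hand side equals $L^{2k}\int_N |\omega|^2_h(y)\,d(f_\star\mu_g)(y)$. Since $f$ is R.-N.-Lipschitz, the Fiber Volume $Vol_{f,\mu_g,\mu_h} = \partial f_\star\mu_g/\partial\mu_h$ exists and is bounded above by some constant $C\geq 0$; equivalently, one can invoke the characterization \eqref{follow} and a standard approximation by simple functions. Either way we obtain
\begin{equation}
\int_N |\omega|^2_h\,d(f_\star\mu_g) \;\leq\; C\int_N |\omega|^2_h\,d\mu_h \;=\; C\|\omega\|^2_{\mathcal{L}^2(N)},
\end{equation}
and combining with the previous display yields $\|f^*\omega\|^2_{\mathcal{L}^2(M)} \leq C L^{2k}\|\omega\|^2_{\mathcal{L}^2(N)}$ on each degree $k$, which bounds $f^*$ on $\Omega^*_c(N)$ and, by density, extends it to a bounded operator on $\mathcal{L}^2(N)$.

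The main technical point is the pointwise step: $f^*\omega$ is a priori only a measurable section of $\Lambda^* T^*M$, defined at points where $f$ is differentiable, and one has to know that the set of non-differentiability is $\mu_g$-null and does not spoil the integration. Rademacher's theorem handles this on $\mathbb{R}^n$, and the bounded-geometry/Riemannian setting is reduced to the Euclidean one in local charts; the rest of the argument is then routine measure theory, so I would expect no further obstacles.
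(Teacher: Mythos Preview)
Your argument is correct and is precisely the standard two-step proof: the Lipschitz bound controls the pointwise norm of $f^*\omega$ via the operator norm of $df_x$ on $\Lambda^k$, and the bounded Fiber Volume converts the integral over $M$ into one over $N$ by the defining property of the pushforward measure. The paper does not actually prove this proposition in-text but cites Proposition~2.4 of \cite{Spes}; your write-up is essentially what that cited proof does, so there is nothing further to compare.
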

In general a Lipschitz map is not R.-N.-Lipschitz: for example if $(M,g)$ and $(N,h)$ are two Riemannian manifolds such that $dim(M) < dim(N)$ then there is no Lipschitz embedding $i:(M,g) \longrightarrow (N,h)$ which is R.-N.-Lipschitz.
\\However we are interested in the Fiber Volume of a submersion. Indeed if $f: (M,g) \longrightarrow (N,h)$ is a Lipschitz submersion between oriented Riemannian manifold, then we know how to compute its Fiber Volume. In order to explain how compute it, we need the notion of quotient between two differential forms.
	\begin{defn}
		Let us consider a differentiable manifold $X$. Given two differential forms $\alpha \in \Omega^k(X)$, $\beta \in \Omega^n(X)$ we define \textbf{a quotient between $\alpha$ and $\beta$}, denoted by $\frac{\alpha}{\beta}$, as a (possibly not continuous) section of $\Lambda^{k-n}(X)$ such that for all $p$ in $M$
		\begin{equation}
		\alpha(p) =  \beta(p) \wedge \frac{\alpha}{\beta}(p).\label{quotient}.
		\end{equation}
	\end{defn}
In general, given two smooth differential forms, we do not know if there is a quotient between them. Moreover, if a quotient between $\alpha$ and $\beta$ exists, it may be not unique. However, given a submersion between Riemannian manifolds $\pi: (X,g) \longrightarrow (Y,h)$, then it is possible to define a locally smooth quotient between $Vol_X$ and $\pi^*Vol_Y$ and the pullback of this quotient on a fiber of $\pi$ is a smooth form which does not depends on the choice of the quotient (pag. 15 of \cite{Spes}). So this means that if we denote by $i_q: F_q \longrightarrow X$ the embedding of the fiber of $q$ in $X$, then 
\begin{equation}
\int_{F_q} i^*_q(\frac{Vol_X}{\pi^*Vol_Y})
\end{equation}
is a well-defined real number for each $q$ in $N$.
\begin{prop}\label{cosa}
	Let $(X,g)$ and $(Y,h)$ two oriented, Riemannian manifolds. Let $\pi: (X,g) \longrightarrow (Y,h)$ be a submersion. Then, if $q$ is on $im(\pi)$, then
	\begin{equation}
	Vol_{\pi,\mu_M,\mu_N}(q) = \int_F \frac{Vol_X}{\pi^*Vol_Y}(q)
	\end{equation}
	and $Vol_{\pi,\mu_M,\mu_N}(q) = 0$ otherwise.
\end{prop}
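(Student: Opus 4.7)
The plan is to verify the Radon--Nikodym identity pointwise by exploiting the local product structure of a submersion and then applying the Lebesgue differentiation theorem. I will treat the two cases separately.

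First, if $q \notin \mathrm{im}(\pi)$, then since $\pi$ is a submersion and hence an open map, $\mathrm{im}(\pi)$ is an open subset of $Y$. Therefore there is an open neighborhood $U$ of $q$ with $\pi^{-1}(U) \cap \mathrm{im}(\pi)^{-1} = \emptyset$, so that $\pi_\star \mu_X$ vanishes on $U$. Any version of the Radon--Nikodym derivative can be taken to vanish on $U$, which gives $\mathrm{Vol}_{\pi,\mu_X,\mu_Y}(q) = 0$.

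For the second case, fix $q_0 \in \mathrm{im}(\pi)$. The key step is to compute $\pi_\star \mu_X$ on a small enough ball $B_r(q_0) \subset Y$ by Fubini. Because $\pi$ is a submersion, around any $p \in \pi^{-1}(q_0)$ I can choose fibered coordinates $(x^1,\dots,x^k,y^1,\dots,y^n)$ in which $\pi$ is the projection onto the last $n$ coordinates. In such a chart, by the very definition of the quotient $\frac{\mathrm{Vol}_X}{\pi^*\mathrm{Vol}_Y}$, the volume form satisfies
\begin{equation}
\mathrm{Vol}_X = \pi^*\mathrm{Vol}_Y \wedge \tfrac{\mathrm{Vol}_X}{\pi^*\mathrm{Vol}_Y},
\end{equation}
and the pullback of $\tfrac{\mathrm{Vol}_X}{\pi^*\mathrm{Vol}_Y}$ to the local fiber slice $\{y = \mathrm{const}\}$ is the fiber volume form described on page 15 of \cite{Spes}. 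Using a locally finite atlas of such fibered charts covering $\pi^{-1}(B_r(q_0))$ together with a subordinate partition of unity, Fubini's theorem gives
\begin{equation}
\mu_X(\pi^{-1}(B_r(q_0))) = \int_{B_r(q_0)} \left( \int_{F_q} i_q^*\!\left(\tfrac{\mathrm{Vol}_X}{\pi^*\mathrm{Vol}_Y}\right) \right) d\mu_Y(q).
\end{equation}
The well-definedness of the inner integral (independent of the chart and of the choice of quotient) is exactly the content recalled in the discussion preceding the proposition.

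Finally, denote by $\Phi(q) := \int_{F_q} i_q^*(\mathrm{Vol}_X/\pi^*\mathrm{Vol}_Y)$ the fiber integral. The displayed identity says that $\Phi$ is a density for $\pi_\star \mu_X$ with respect to $\mu_Y$; uniqueness of the Radon--Nikodym derivative then yields $\mathrm{Vol}_{\pi,\mu_X,\mu_Y} = \Phi$ almost everywhere. To upgrade this to the pointwise equality claimed at every $q \in \mathrm{im}(\pi)$, I would invoke the Lebesgue differentiation theorem: the right-hand side $\Phi$ is continuous in $q$ on the image of $\pi$ by the smoothness of the quotient in fibered coordinates and the smooth dependence of fiber integrals on the parameter, so $\Phi(q_0) = \lim_{r \to 0} \mu_Y(B_r(q_0))^{-1} \mu_X(\pi^{-1}(B_r(q_0)))$, which is precisely $\mathrm{Vol}_{\pi,\mu_X,\mu_Y}(q_0)$.

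The main obstacle is the Fubini step: I have to guarantee that the global fiber integral makes sense even when the fibers are non-compact, and that the partition-of-unity argument glues the local product formulas together without ambiguity. The first point is handled by the hypothesis that $\mathrm{Vol}_{\pi,\mu_X,\mu_Y}$ is a well-defined Radon--Nikodym derivative (which already forces finiteness on sets of finite $\mu_Y$-measure), and the second by the fact that the quotient's restriction to a fiber is chart-independent, so the local fiber integrals assemble into the intrinsic quantity $\Phi(q)$.
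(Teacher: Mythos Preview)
The paper does not actually prove this proposition: it is stated as a recall of Proposition~2.5 from \cite{Spes}, and no argument is given here. So there is nothing in the present paper to compare your attempt against directly; your write-up is in fact more detailed than what the paper offers.

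Your overall strategy---local fibered coordinates, the factorization $\mathrm{Vol}_X = \pi^*\mathrm{Vol}_Y \wedge (\mathrm{Vol}_X/\pi^*\mathrm{Vol}_Y)$, Fubini, and identification of the Radon--Nikodym density---is the standard one and is essentially what one finds in \cite{Spes}. The case $q\notin\mathrm{im}(\pi)$ is handled correctly.

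One point deserves more care. Your upgrade from almost-everywhere to pointwise equality relies on the claim that $\Phi(q)=\int_{F_q} i_q^*(\mathrm{Vol}_X/\pi^*\mathrm{Vol}_Y)$ is continuous in $q$, justified by ``smooth dependence of fiber integrals on the parameter''. This is unproblematic when the fibers are compact, but for non-compact fibers there is no dominating integrable function in general, and $\Phi$ may fail to be continuous (it can even be $+\infty$ on some fibers). Since the Fiber Volume is \emph{defined} in the paper as a Radon--Nikodym derivative, the statement is really an almost-everywhere identification of a representative, and your Fubini computation already delivers that without any continuity argument or Lebesgue differentiation. I would therefore drop the last paragraph about pointwise equality and simply conclude from Fubini that $\Phi$ is a version of $\mathrm{Vol}_{\pi,\mu_X,\mu_Y}$; this also makes your final remark about finiteness (``forces finiteness on sets of finite $\mu_Y$-measure'') unnecessary, as the formula is perfectly meaningful with $\Phi$ taking values in $[0,+\infty]$.
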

\begin{rem}\label{oss}
	If the submersion $f: X \rightarrow Y$ is a diffeomorphism between oriented manifold which preserves the orientations, then the integration along the fibers of $f$ is the pullback $(f^{-1})^*$. This means that the Fiber Volume of $f$ is given by $\vert(f^{-1})^*\frac{Vol_X}{f^*(Vol_Y)}\vert$.
\end{rem}
We also recall Proposition 2.6 of \cite{Spes}: this Proposition allow to compute the Fiber Volume of the composition of two submersions.
\begin{prop}\label{compo}
	Let $f:(M,g) \longrightarrow (N,h)$ and $g:(N,h) \longrightarrow (W, l)$ be two submersions between oriented Riemannian manifolds. Then
	\begin{equation}
	Vol_{g \circ f, \mu_M, \mu_W}(q) = \int_{g^{-1}(q)}(\int_{f^{-1}g^{-1}(q)} \frac{Vol_M}{f^*(Vol_N)})\frac{Vol_N}{g^*Vol_N}
	\end{equation}
\end{prop}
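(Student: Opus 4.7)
The plan is to combine Proposition \ref{cosa}, which expresses the Fiber Volume of a submersion as the integral along the fiber of a quotient of top forms, with a Fubini-type identity on the tower of fibers induced by the composition. First I would apply Proposition \ref{cosa} to $g \circ f$, which is again a submersion between oriented Riemannian manifolds, to obtain
\begin{equation*}
Vol_{g \circ f, \mu_M, \mu_W}(q) = \int_{(g \circ f)^{-1}(q)} \frac{Vol_M}{(g \circ f)^* Vol_W}
\end{equation*}
for $q$ in the image, and to identify $(g \circ f)^{-1}(q) = f^{-1}(g^{-1}(q))$.

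Next I would establish the multiplicative decomposition
\begin{equation*}
\frac{Vol_M}{(g \circ f)^* Vol_W} = \frac{Vol_M}{f^* Vol_N} \wedge f^*\!\left(\frac{Vol_N}{g^* Vol_W}\right)
\end{equation*}
as an identity of (possibly non-unique) quotients. This uses $(g \circ f)^* = f^* \circ g^*$ together with the defining wedge relation for a quotient: if $\alpha$ is a quotient of $Vol_M$ by $f^*Vol_N$ and $\beta$ a quotient of $Vol_N$ by $g^*Vol_W$, then from $f^*Vol_N \wedge \alpha = Vol_M$ and the pullback $f^*g^*Vol_W \wedge f^*\beta = f^*Vol_N$ one obtains $(g \circ f)^*Vol_W \wedge (f^*\beta \wedge \alpha) = Vol_M$.

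Then I would apply the iterated integration along the fibers of the restriction $f : f^{-1}(g^{-1}(q)) \longrightarrow g^{-1}(q)$. Since the second factor in the wedge above is a pullback along $f$, the usual projection formula for integration along fibers of an oriented submersion gives
\begin{equation*}
\int_{f^{-1}(g^{-1}(q))} \frac{Vol_M}{f^* Vol_N} \wedge f^*\!\left(\frac{Vol_N}{g^* Vol_W}\right) = \int_{g^{-1}(q)} \left(\int_{f^{-1}(p)} \frac{Vol_M}{f^* Vol_N}\right) \frac{Vol_N}{g^* Vol_W}(p),
\end{equation*}
which by Proposition \ref{cosa} applied to $f$ identifies the inner integral with $Vol_{f,\mu_M,\mu_N}(p)$ and yields the stated formula.

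The main obstacle is that the quotients involved are not unique as sections of exterior powers, so the three integrals above must be shown to depend only on the submersions and not on the representatives chosen. This is exactly what is guaranteed by the remark preceding Proposition \ref{cosa}: the pullback of $\frac{Vol_X}{\pi^*Vol_Y}$ to any fiber is a well-defined smooth form, so both inner integrals in the decomposition are intrinsic, and the outer integral in turn depends only on the restriction to $g^{-1}(q)$. A small amount of bookkeeping is needed to align the orientations of $f^{-1}(p)$, $g^{-1}(q)$, and $(g\circ f)^{-1}(q)$ so that the signs in the wedge decomposition match those implicit in Proposition \ref{cosa}; this is routine once the fibered orientations used throughout the paper are fixed consistently.
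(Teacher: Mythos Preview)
The paper does not actually prove this proposition; it is merely recalled from Proposition~2.6 of \cite{Spes}, so there is no in-paper argument to compare against. That said, your approach is the natural and correct one: apply Proposition~\ref{cosa} to the composed submersion, factor the quotient of volume forms using $(g\circ f)^* = f^*g^*$, and then use the projection formula for iterated integration along the fibers of $f^{-1}(g^{-1}(q)) \to g^{-1}(q)$. Your handling of the non-uniqueness of quotients via the remark preceding Proposition~\ref{cosa} is also on point, and this is almost certainly how the result is obtained in \cite{Spes} as well.
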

Finally we conclude this subsection by showing the relation between R.-N.-Lipschitz maps and the quasi-isometries.
\begin{prop}
Let $f:(X,g) \longrightarrow (Y,h)$ be a quasi-isometry. Then $f$ is a R.N.-Lipschitz map.
\end{prop}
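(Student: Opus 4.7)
The plan is to verify directly the two defining conditions of a R.-N.-Lipschitz map, using the equivalent characterization from Remark after the definition.

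First I would verify that $f$ is Lipschitz. The upper bound $f^{*}h \leq Kg$ in the definition of a quasi-isometry says that the operator norm of $df_{p}: (T_{p}X,g_{p}) \to (T_{f(p)}Y,h_{f(p)})$ is bounded by $\sqrt{K}$ at every point. Consequently, for any piecewise smooth curve $\gamma$ in $X$ one has $L_{h}(f\circ\gamma) \leq \sqrt{K}\, L_{g}(\gamma)$, and taking the infimum over all curves joining two points gives the global bound $d_{Y}(f(p),f(q)) \leq \sqrt{K}\, d_{X}(p,q)$.

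The substantive content is the bound on the Fiber Volume. Since $f$ is a (local) diffeomorphism, its fibers are $0$-dimensional, and I would invoke Remark \ref{oss} (applied pointwise, or equivalently Proposition \ref{cosa}) to express $Vol_{f,\mu_{X},\mu_{Y}}(q)$ as a sum over $p \in f^{-1}(q)$ of the pointwise quantity $\left|\tfrac{Vol_{X}}{f^{*}Vol_{Y}}(p)\right|$. Working in local coordinates in which $g$ has Gram matrix $G$ and the pulled-back metric $f^{*}h$ has Gram matrix $H$, a direct computation gives
\begin{equation}
    \left|\frac{Vol_{X}}{f^{*}Vol_{Y}}\right| \;=\; \sqrt{\frac{\det G}{\det H}}.
\end{equation}
The quasi-isometry hypothesis $K^{-1}g \leq f^{*}h \leq Kg$, read as an inequality between positive-definite symmetric bilinear forms, yields the determinant estimates $K^{-n}\det G \leq \det H \leq K^{n}\det G$ with $n = \dim X$. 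Hence the pointwise ratio is bounded uniformly by $K^{n/2}$, which gives a uniform pointwise bound on each summand in the expression for $Vol_{f,\mu_{X},\mu_{Y}}$.

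The main obstacle is controlling the sum over a whole fiber, since in general a local diffeomorphism satisfying the pinching condition need not have singleton fibers (a Riemannian covering would be a counterexample). This is resolved in the present setting because the quasi-isometries used in the paper are global diffeomorphisms, so $f^{-1}(q)$ is a single point and the pointwise bound $K^{n/2}$ becomes a global bound on $Vol_{f,\mu_{X},\mu_{Y}}$; combining this with the Lipschitz step completes the argument. Equivalently, one can check the integral form $\mu_{X}(f^{-1}(A)) \leq K^{n/2}\mu_{Y}(A)$ by the change-of-variables formula applied to the bi-Lipschitz map $f$.
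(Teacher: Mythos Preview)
Your argument is correct and follows essentially the same route as the paper: the Lipschitz bound comes directly from $f^{*}h \leq Kg$, and the Fiber Volume bound from the pointwise estimate $\sqrt{\det g / \det f^{*}h} \leq K^{n/2}$. The paper phrases the second step as bounding the Fiber Volume of $id:(X,g)\to(X,f^{*}h)$ and derives the determinant inequality via simultaneous diagonalization of $g$ and $f^{*}h$, while you work directly with $f$ and quote the general determinant inequality for ordered positive forms; these are cosmetic differences.

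You are in fact more careful than the paper on one point. The paper's definition of quasi-isometry only requires $f$ to be a \emph{local} diffeomorphism, and, as you correctly observe, the pointwise bound does not control the sum over an entire fiber (an infinite-sheeted Riemannian covering would be a genuine counterexample). The paper's own proof does not address this: the reduction to $id$ silently assumes singleton fibers. Your explicit remark that the argument requires $f$ to be a global diffeomorphism (or at least to have uniformly bounded fiber cardinality) identifies the hypothesis the paper is tacitly using.
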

\begin{proof}
Notice that $f$ is Lipschitz: it directly follows from the definition of quasi-isometry. So we just have to show that $f$ has a bounded Fiber Volume. This can be proved by showing that $id:(X,g) \longrightarrow (X, f^*h)$ has bounded Fiber Volume. Let $x$ be a point of $X$. By a simultaneous diagonalization argument we can choose a basis $\{e_1, \ldots, e_{n}\}$ of $T_xX$ such that the Gram matrices of $f^*h$ and $g$ are both diagonals. Observe that for each $i = 1, \ldots, n$ we obtain that $f^*h(e_i, e_i) \leq K g(e_i, e_i)$. So this means that
\begin{equation}
     det(f^*h_{jl}(x)) = \prod_{i=1}^n f^*h(e_i, e_i) \geq K^{-n} \prod_{i=1}^n g(e_i, e_i) = det(g_{jl}(x)).
\end{equation}
Notice that the Fiber Volume of $id$ on a point $y = f(x)$ is given by
\begin{equation}
\sqrt{\frac{det(g_{jl}(x))}{det(f^*h_{jl}(x))}} \leq K^\frac{n}{2}.
\end{equation}
This conclude the proof. 
\end{proof}
\subsection{Generalized Sasaki metrics}
The definition of \textit{Sasaki metric} on a vector bundle given in this paper is a generalization of the metric defined by Sasaki in \cite{SSK} for the tangent bundle of a Riemannian manifolds. The definition of this generalized Sasaki metric can be found in page 2 of the paper of Boucetta and Essoufi \cite{Boucetta}.
	\\Let us consider a Riemannian manifold $(N,h)$ of dimension $n$, $\pi_E : E \longrightarrow N$ a vector bundle of rank $m$ endowed with a bundle metric $H_E \in \Gamma(E^*\otimes E^*)$ and a linear connection $\nabla_E$ which preserves $H_E$. Fix $\{s_\alpha \}$ a local frame of $E$: if $\{x^i\}$ is a system of local coordinates over $U \subseteq N$, then we can define the system of coordinates $\{x^i, \mu^\alpha \}$ on $\pi_E^{-1}(U)$, where the $\mu^\alpha $ are the components with respect to $\{s_\alpha \}$.
	\\Let us denote by $K$ the map $K: TE \longrightarrow E$ defined as
	\begin{equation}
	K(b^i\frac{\partial}{\partial x^i }\vert_{(x_0, \mu_0)} + z^\alpha \frac{\partial}{\partial \mu^\alpha }\vert_{(x_0, \mu_0)}) := (z^\alpha + b^i \mu^j \Gamma_{ij}^\alpha (x_0))s_\alpha (x_0),
	\end{equation}
	where the $\Gamma_{ij}^l$ are the Christoffel symbols of $\nabla_E$. The Christoffel symbols $\Gamma^\gamma_{\eta j}(x)$ are defined by the formula
		\begin{equation}
		 \nabla^E_{\frac{\partial}{\partial x^j }} s_{\eta}(x) := \Gamma^\gamma_{\eta j}(x) s_\gamma(x).
		\end{equation}
	\begin{defn}
		The \textbf{Sasaki metric} on $E$ is the Riemannian metric $h^E$ defined for all $A,B$ in $T_{(p, v_p)}E$ as
		\begin{equation}
		h^E(A,B) := h(d\pi_{E,v_p}(A), d\pi_{E,v_p}(B)) + H_E(K(A), K(B)).
		\end{equation}
	\end{defn}
	\begin{rem}\label{riemsub}
		Let us consider the system of coordinates $\{x^i\}$ on $N$ and $\{x^i, \mu^j\}$ on $E$. The components of $h^E$ are given by
		\begin{equation}\label{metri}
		\begin{cases}
		h^E_{ij}(x,\mu) = h_{ij}(x) + H_{\alpha\gamma}(x)\Gamma^\alpha_{\beta i}(x)\Gamma^\gamma_{\eta j}(x)\mu^\beta\mu^\eta \\
		h^E_{i\sigma}(x, \mu) = H_{\sigma \alpha}(x)\Gamma^\alpha_{\beta i}(x)\mu^\beta\\
		h^E_{\sigma\tau}(x,\mu) = H_{\sigma, \tau}(x), 
		\end{cases}
		\end{equation}
		where $i,j = 1,\ldots, n$ and $\sigma, \tau = n+1,\ldots,n+m$.
		Consider a point $x_0 = (x^1_0, \ldots, x^n_0)$ in $N$. If all the Christoffel symbols of $\nabla_E$ in $x_0$ are zero, then, in local coordinates, the matrix of $h^E$ in a point $(x_0, \mu)$ is
		\begin{equation}\label{metrinulla}
		\begin{bmatrix}
		h_{i,j}(x_0) && 0 \\
		0 && H_{\sigma, \tau}(x_0)
		\end{bmatrix}.
		\end{equation}
Moreover, with respect to the coordinates $(x^i, \mu^\sigma)$, the matrix $h_E := (h^E)^{-1}$ is given by
\begin{equation}\label{invmetri}
    \begin{cases}
        h^{ij}_E(x,\mu) =  h^{ij}(x) \\
        h^{i\sigma}_E = -\Gamma^\sigma_{\beta j}(x) h^{ij}(x)\mu^\beta \\
        h_E^{\sigma \tau} = H^{\sigma \tau}(x) +  h^{ij}(x) \Gamma^\sigma_{\beta i}(x)\Gamma^\tau_{\eta j}(x)\mu^\beta\mu^\eta
    \end{cases}
\end{equation}
where $H^{\sigma \tau}$ and $ h^{ij}(x)$ are the components of the inverse matrices of $h_{ij}$ and $H_{\sigma, \tau}$.
	\end{rem}
	\begin{exem}\label{twos}
		Let $(M,g)$ a Riemannian manifold. Consider as $E$ the tangent bundle $TM$ and as $h_E$ the metric $g$ itself. Choose the connection $\nabla_E$ as the Levi-Civita connection $\nabla_g^{LC}$. We denote by $g_S$ the Sasaki metric induced by $g$ and $\nabla^{LC}_g$.
	\end{exem}
	\begin{exem}\label{threes}
	Consider a smooth map $f:(M,g) \longrightarrow (N,h)$. Let $\pi: f^*TN \longrightarrow M$ be the pullback bundle. Then the Riemannian metric $h$ can be seen as a bundle metric on $TN$ and so we obtain a bundle metric $f^*h$ on $f^*TN$. Fix the connection $f^*\nabla^{LC}_h$ on $f^*TN$ which is the pullback of the Levi-Civita connection on $(N,g)$. Let us denote by $g_{S,f}$ the Sasaki metric induced by $f^*\nabla^{LC}_h$, $f^*h$ and $g$.
	\end{exem}
	\begin{rem}
	 Then the Christoffel symbols of the pullback connection $f^*\nabla^E$ with respect to the pullback frame $\{f^* e_i\}$ and to the coordinates $\{x^i\}$ are given by 
	\begin{equation}
	 \tilde{\Gamma}^\alpha_{\beta, i}:= \frac{\partial f^l}{\partial x^i}f^*(\Gamma^\alpha_{\beta, l}).
	\end{equation}
	A first consequence of this fact is that the map $ID: (id^*TN, g_{S, id}) \longrightarrow (TN, g_S)$ which sends $(p, w_{p})$ to $w_p$ is an isometry. So, from now on, we will identify $(id^*TN, g_{S, id})$ and $(TN, g_S)$.
	\end{rem}
	\begin{rem}\label{respect}
	Let $f: (M,g) \longrightarrow (N,h)$ be a smooth map. Let us fix a chart $\{U, x^i\}$ on $M$ and a chart $\{V, y^j\}$ on $N$ such that $f(U) \subseteq V$. Fix a bundle $E$ on $N$ and let $\nabla^E$ be a connection. Let us denote by $\Gamma^\alpha_{\beta l}$ the Christoffel symbols of $\nabla^E$ with respect to a frame $\{e_i\}$ and the coordinates $\{y^j\}$. Fix $\delta > 0$ and let $T^\delta N$ be the subset of $TN$ given by the vector whose norm is less or equal to $\delta$. If $f: (M,g) \longrightarrow (N,h)$ is a smooth Lipschitz map, then also the induced bundle map 
	\begin{equation}
	    \begin{split}
	        F:(f^*T^\delta N, g_{S,f}) &\longrightarrow (T^\delta N, g_S) \\
	        (p, w_{f(p)}) &\longrightarrow w_{f(p)}
	    \end{split}
	\end{equation}
	is a smooth Lipschitz map.
	\end{rem}
	\begin{prop}\label{geodesis}
	 Consider a vector bundle $(E, \pi, M)$ over a Riemannian manifold $(M,g)$. Fix on $E$ a bundle metric $h$, a connection $\nabla^E$ and let us denote by $h_E$ the Sasaki metric induced by $g$, $h$ and $\nabla^E$. Let us suppose that for each point $p$ on $M$ there is a system of normal coordinates $\{x^i\}$ around $p$ and a local frame $\{s_\sigma\}$ such that the Christoffel symbols of $\nabla^E$ vanishes at $x= 0$. Moreover let us suppose that $\frac{\partial h_{\sigma \tau}}{\partial x^k}(0) = 0$, where $h_{\sigma \tau}$ are the components of the Gram matrix of $h$ with respect to the coordinates $\{x^i\}$. Then the fibers of $\pi$ are totally geodesic submanifolds (in our case this means that the straight lines on the fibers are geodesics) and $\pi$ is a Riemannian submersion.
	\end{prop}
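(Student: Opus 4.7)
My plan is to work in the distinguished coordinates $\{x^{i},\mu^{\sigma}\}$ provided by the hypothesis: normal coordinates $\{x^{i}\}$ on $M$ around an arbitrarily chosen point $x_{0}$ and a local frame $\{s_{\sigma}\}$ of $E$ such that $\Gamma^{\alpha}_{\beta i}(x_{0})=0$ and $\partial_{k}h_{\sigma\tau}(x_{0})=0$. Because $\{x^{i}\}$ is normal for $g$, we also have $g_{ij}(x_{0})=\delta_{ij}$ and $\partial_{k}g_{ij}(x_{0})=0$. The whole argument is then a substitution into the Sasaki formulas (\ref{metri}) and (\ref{invmetri}): these three vanishings collapse $h_{E}$ to block-diagonal form at every point $(x_{0},\mu)$ over $x_{0}$, and kill every derivative that would couple the fiber directions to the rest.

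For the Riemannian submersion claim I would evaluate (\ref{metri}) at $x_{0}$: the mixed component $h_{E\,i\sigma}(x_{0},\mu)=h_{\sigma\alpha}(x_{0})\Gamma^{\alpha}_{\beta i}(x_{0})\mu^{\beta}$ vanishes, and the horizontal block reduces to $g_{ij}(x_{0})$. The vertical subspace at $(x_{0},\mu)$ is $\ker d\pi=\mathrm{span}\{\partial/\partial\mu^{\sigma}\}$, so the block-diagonal form identifies the horizontal subspace with $\mathrm{span}\{\partial/\partial x^{i}\}$. Since $d\pi$ sends $\partial/\partial x^{i}|_{(x_{0},\mu)}$ to $\partial/\partial x^{i}|_{x_{0}}$ and the corresponding Gram matrices agree, $d\pi$ restricts to a linear isometry on horizontals. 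As $x_{0}$ was arbitrary, $\pi$ is a Riemannian submersion.

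For the totally geodesic statement, I want to prove that $\tilde{\Gamma}^{a}_{\sigma\tau}(x_{0},\mu)=0$ for every index $a$ and every $\mu\in E_{x_{0}}$, where $\tilde{\Gamma}$ denotes the Christoffel symbols of $h_{E}$; the straight line $\gamma(t)=(x_{0},\mu_{0}+tv)$ then satisfies $\ddot{\gamma}^{a}+\tilde{\Gamma}^{a}_{\sigma\tau}\dot{\gamma}^{\sigma}\dot{\gamma}^{\tau}=0$ because $\gamma$ stays over $x_{0}$ and $\dot\gamma$ is purely vertical. Using (\ref{invmetri}), the inverse metric at $(x_{0},\mu)$ is also block-diagonal, so in
\[\tilde{\Gamma}^{a}_{\sigma\tau}=\tfrac{1}{2}h_{E}^{ab}\bigl(\partial_{\sigma}h_{E\,b\tau}+\partial_{\tau}h_{E\,b\sigma}-\partial_{b}h_{E\,\sigma\tau}\bigr)\]
it suffices to check that each of the three derivatives vanishes at $(x_{0},\mu)$. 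Differentiating (\ref{metri}) in a fiber direction only produces terms containing $\Gamma^{\bullet}_{\bullet i}(x_{0})$ as a factor, hence zero; differentiating $h_{E\,\sigma\tau}=h_{\sigma\tau}(x)$ in any direction is either zero (fiber direction) or $\partial_{k}h_{\sigma\tau}(x_{0})=0$ (horizontal direction).

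The only subtlety I anticipate is that the hypotheses control $\Gamma^{\alpha}_{\beta i}$ and $\partial_{k}h_{\sigma\tau}$ only at $x_{0}$, not their $x$-derivatives; one must therefore check that each surviving term above carries a factor of $\Gamma^{\bullet}_{\bullet i}(x_{0})$ or $\partial_{k}h_{\sigma\tau}(x_{0})$, rather than a first derivative like $\partial_{k}\Gamma^{\bullet}_{\bullet i}(x_{0})$ which the assumptions do not control. This works out because the $\mu$-dependence of $h_{E\,i\sigma}$ and $h_{E\,ij}$ is polynomial with coefficients of the shape $h_{\bullet\bullet}\Gamma^{\bullet}_{\bullet i}$ or $h_{\bullet\bullet}\Gamma^{\bullet}_{\bullet i}\Gamma^{\bullet}_{\bullet j}$, so every derivative taken in a fiber direction still leaves at least one Christoffel evaluated at $x_{0}$. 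No genuine obstacle arises beyond keeping this bookkeeping straight.
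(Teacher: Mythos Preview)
The paper states Proposition~\ref{geodesis} without proof (it is evidently recalled from \cite{Spes}), so there is no argument to compare against. Your proof is correct and is the natural direct computation: you use the coordinate formulas (\ref{metri}) and (\ref{invmetri}) from Remark~\ref{riemsub}, exploit the vanishing of $\Gamma^{\alpha}_{\beta i}(x_{0})$ to obtain the block-diagonal form (\ref{metrinulla}) for the Riemannian-submersion claim, and then verify $\tilde{\Gamma}^{a}_{\sigma\tau}(x_{0},\mu)=0$ by checking that each of the three derivatives in the Christoffel formula vanishes at $(x_{0},\mu)$. Your bookkeeping remark about the potential appearance of uncontrolled terms like $\partial_{k}\Gamma^{\bullet}_{\bullet i}(x_{0})$ is well taken and correctly resolved: such derivatives could only arise from a horizontal differentiation of $h_{E,i\sigma}$ or $h_{E,ij}$, but none of the three terms $\partial_{\sigma}h_{E,b\tau}$, $\partial_{\tau}h_{E,b\sigma}$, $\partial_{b}h_{E,\sigma\tau}$ involves differentiating a mixed or horizontal component in a horizontal direction. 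This is precisely the computation the author presumably has in mind.
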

		\begin{rem}
	The vector bundles of the Examples \ref{twos} and \ref{threes} satisfy the assumptions of Proposition \ref{geodesis}.
	\end{rem}
	\begin{cor}
	Consider $0_E$ the null section of a vector bundle $\pi: E \longrightarrow M$. Under the same assumptions of the Proposition \ref{geodesis}, the disk bundle $E^\delta := \{v_p \in E \vert \sqrt{h(v_p,v_p)}  \leq \delta \}$ coincides with $B_{\delta}(0_E) := \{v_p \in E \vert d_{h^E}(v_p, 0_E) \leq \delta \}$ for each $\delta > 0$.
	\end{cor}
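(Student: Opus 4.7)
My plan is to prove the pointwise identity $d_{h^E}(v_p, 0_E) = \sqrt{h(v_p, v_p)}$ for every $v_p \in E$, from which the desired equality $E^\delta = B_\delta(0_E)$ follows immediately for each $\delta > 0$. The argument splits naturally into two opposite inequalities.

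For the upper bound $d_{h^E}(v_p, 0_E) \leq \sqrt{h(v_p, v_p)}$, I would invoke Proposition \ref{geodesis}: since the fibers of $\pi$ are totally geodesic and the geodesics inside the fiber $\pi^{-1}(p)$ are precisely its straight lines, the radial segment $t \mapsto t v_p$, $t \in [0,1]$, is a geodesic from $0_p$ to $v_p$. Its $h^E$-length equals $\sqrt{h(v_p, v_p)}$ because, by the very definition of $h^E$, the Sasaki metric restricts to the bundle metric $h$ on each vertical subspace. Hence the distance from $v_p$ to the null section $0_E$ cannot exceed this value.

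For the reverse inequality, I would introduce the norm function $\phi : E \to \mathbb{R}$, $\phi(w) := \sqrt{h(w,w)}$, and claim that $|d\phi|_{h^E} \equiv 1$ on $E \setminus 0_E$. Granted this claim, for any piecewise smooth curve $\gamma : [0,1] \to E$ joining $v_p$ to some $0_q \in 0_E$ one estimates
\begin{equation*}
\sqrt{h(v_p, v_p)} = |\phi(v_p) - \phi(0_q)| \leq \int_0^1 |d\phi|_{h^E} \cdot |\gamma'(t)|_{h^E}\, dt = L(\gamma),
\end{equation*}
and passing to the infimum over $\gamma$ yields $\sqrt{h(v_p, v_p)} \leq d_{h^E}(v_p, 0_E)$.

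The main technical point is therefore verifying $|d\phi|_{h^E}(v_p) = 1$ at every nonzero $v_p$. I plan to exploit the distinguished coordinates provided by the hypothesis of Proposition \ref{geodesis}: around the base point $p = \pi(v_p)$ there exist normal coordinates $\{x^i\}$ and a local frame $\{s_\sigma\}$ in which the Christoffel symbols $\Gamma^\alpha_{\beta i}$ of $\nabla^E$ and the derivatives $\partial_k h_{\sigma\tau}$ simultaneously vanish at $x = 0$. Using the explicit formulas for the inverse Sasaki metric collected in Remark \ref{riemsub}, at such a point the mixed coefficients $h_E^{i\sigma}$ vanish and $h_E^{\sigma\tau}$ reduces to $H^{\sigma\tau}$; moreover $\partial_i \phi(v_p) = 0$ while $\partial_\sigma \phi(v_p) = \phi(v_p)^{-1} h_{\sigma\tau}\mu^\tau$. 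A direct computation then collapses $|d\phi|^2_{h^E}$ to $\phi^{-2}\, H^{\sigma\tau} h_{\sigma\alpha} h_{\tau\beta}\,\mu^\alpha \mu^\beta = \phi^{-2}\, h_{\alpha\beta}\mu^\alpha \mu^\beta = 1$. Since every nonzero $v_p$ lies over a base point admitting such distinguished coordinates, the identity $|d\phi|_{h^E} \equiv 1$ holds everywhere on $E \setminus 0_E$, which closes the argument.
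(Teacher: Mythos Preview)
Your proof is correct. The paper states this corollary without proof, as an immediate consequence of Proposition~\ref{geodesis}; your argument supplies the details in a natural way. One small technicality worth noting: $\phi$ is not differentiable on $0_E$, so the chain-rule estimate along $\gamma$ strictly applies only while $\gamma$ stays in $E\setminus 0_E$---but this is harmless, since you may stop $\gamma$ at its first intersection with the zero section, or simply observe that $|d\phi|_{h^E}\le 1$ off $0_E$ together with continuity makes $\phi$ globally $1$-Lipschitz. A slightly more conceptual (and coordinate-free) route to $|d\phi|_{h^E}=1$, equivalent to your computation, is that $d\phi$ annihilates horizontal vectors because $\nabla^E$ preserves $h$ (parallel transport preserves the fiber norm), while on vertical vectors the Sasaki metric is just $h$, and the gradient of the Euclidean norm on a fiber has unit length.
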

	\begin{prop}\label{star}
	Consider a Riemannian manifold $(M,g)$ and let $\pi: E \longrightarrow M$ be a vector bundle. Fix on $E$ a metric bundle $h_E$ and a connection $\nabla_E$. Let $h^E$ be the Sasaki metric on $E$ defined by using $g$, $h_E$ and $\nabla_E$. Fix a $\delta > 0$. Then, under the assumptions of Proposition \ref{geodesis} $\pi: E^\delta \longrightarrow M$ is a R.-N.-Lipschitz map. In particular the Fiber Volume on a point $q$ is the Volume of an Euclidean ball of radius $\delta$. Finally the integration along the fibers $\pi_\star: \Omega_{c}^*(E^\delta) \longrightarrow \Omega^*(M)$ is an $\mathcal{L}^2$-bounded operator.
	\end{prop}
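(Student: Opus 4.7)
The plan is to verify in turn the three assertions: that $\pi$ is Lipschitz, that the Fiber Volume equals the volume of a Euclidean $m$-ball, and finally that $\pi_\star$ is $\mathcal{L}^2$-bounded. Throughout I will exploit the fact, granted by Proposition \ref{geodesis}, that $\pi:(E, h^E)\to (M,g)$ is a Riemannian submersion, and will compute in the adapted coordinates $\{x^i,\mu^\sigma\}$ of Remark \ref{riemsub} centered at a point $p\in M$ where the Christoffel symbols of $\nabla_E$ and the first derivatives of $H_{\sigma\tau}$ vanish.

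For the Lipschitz property, since $\pi$ is a Riemannian submersion, the differential $d\pi$ is the orthogonal projection onto the horizontal subspace followed by an isometry; hence $|d\pi(v)|_g\leq |v|_{h^E}$ for every $v\in TE$, so $\pi$ is $1$-Lipschitz, and a fortiori its restriction to $E^\delta$ is Lipschitz.

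For the Fiber Volume, I would use Proposition \ref{cosa}: it suffices to compute $\int_{F_q}i^*_q\bigl(\frac{Vol_E}{\pi^*Vol_M}\bigr)$ for $q\in M$. Choosing coordinates $\{x^i\}$ near $q$ and a local frame $\{s_\sigma\}$ of $E$ so that $\Gamma^\alpha_{\beta i}(q)=0$, formula \eqref{metri} shows that the block matrix of $h^E$ at points of the fiber $E_q$ takes the form
\begin{equation}
h^E(q,\mu)=\begin{bmatrix} h_{ij}(q)+H_{\alpha\gamma}\Gamma^\alpha_{\beta i}\Gamma^\gamma_{\eta j}\mu^\beta\mu^\eta & H_{\sigma\alpha}\Gamma^\alpha_{\beta i}\mu^\beta \\ H_{\sigma\alpha}\Gamma^\alpha_{\beta j}\mu^\beta & H_{\sigma\tau}(q) \end{bmatrix},
\end{equation}
and the standard block-Schur complement identity gives $\det h^E=\det h\cdot\det H$. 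Consequently
\begin{equation}
Vol_E=\sqrt{\det h(q)\det H(q)}\,dx\wedge d\mu,\qquad \pi^*Vol_M=\sqrt{\det h(q)}\,dx,
\end{equation}
so a quotient $\frac{Vol_E}{\pi^*Vol_M}$ exists and pulls back to the fiber $E_q$ as $\sqrt{\det H(q)}\,d\mu^1\wedge\ldots\wedge d\mu^m$, which is precisely the Riemannian volume form on $(E_q,H_q)$. By the corollary preceding the statement, $E_q^\delta=B_\delta(0_q)\subset (E_q,H_q)$ is an Euclidean ball of radius $\delta$, so its volume equals the volume $\omega_m\delta^m$ of the standard Euclidean $m$-ball, independently of $q$. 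This simultaneously proves the boundedness of the Fiber Volume and identifies its value.

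Combining the first two points, $\pi:E^\delta\to M$ is R.-N.-Lipschitz, hence by the Proposition preceding Remark \ref{oss} the pullback $\pi^*$ is an $\mathcal{L}^2$-bounded operator. Since $\pi_\star=(\pi^*)^\dagger=\tau_M\circ(\pi^*)^*\circ\tau_E$ and the chirality operators are $\mathcal{L}^2$-isometries, $\pi_\star$ is $\mathcal{L}^2$-bounded with $\|\pi_\star\|=\|\pi^*\|$, proving the last assertion. The main technical point is the block-matrix determinant identity yielding $\det h^E=\det h\cdot\det H$; everything else is a direct application of earlier results in the excerpt.
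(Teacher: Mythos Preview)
Your proof is correct. The paper itself does not supply a proof of this proposition---it is stated as a result recalled from \cite{Spes}---so there is no in-paper argument to compare against. Your approach is the natural one: use the Riemannian-submersion property from Proposition \ref{geodesis} for the Lipschitz bound, compute the Fiber Volume via Proposition \ref{cosa} in adapted coordinates, and deduce boundedness of $\pi_\star$ from that of $\pi^*$ via the identity $\pi_\star=(\pi^*)^\dagger$.

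One small remark: since you have already arranged $\Gamma^\alpha_{\beta i}(q)=0$, the Gram matrix of $h^E$ at every point $(q,\mu)$ of the fiber over $q$ is block diagonal (this is exactly formula \eqref{metrinulla}), so $\det h^E=\det h\cdot\det H$ is immediate and the Schur-complement identity is not needed. It is true in general (and your computation of the Schur complement does go through), but invoking it here slightly obscures that the work was already done when you chose the frame.
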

\subsection{Mathai-Quillen-Thom forms}\label{form}
Let us start by introducing the notion of Thom form.
	 \begin{defn}
	 Let $\pi: E \longrightarrow M$ be a vector bundle. A smooth form $\omega$ in $\Omega_{cv}^*(E)$ is a \textbf{Thom form} if it is closed and its integral along the fibers of $\pi$ is equal to the constant function $1$.
	 \end{defn}
	 Given a Thom form $\omega$ of $f^*TN$ such that $supp(\omega)$ is contained in a $\delta_0 < \delta$ neighborhood of the null section, let us define an operator $e_\omega: \Omega^*(f^*(T^\delta N)) \longrightarrow \Omega^*(f^*(T^\delta N))$, for every smooth form $\alpha$ as
	  \begin{equation}
	  	e_\omega(\alpha) := \alpha \wedge \omega.
	  \end{equation}
	  Here $f^*(T^\delta N)$ is the $\delta$-neighborhood of the $0$-section of $f^*TN$.
	 \\In this paper, we use the Thom forms introduced by Mathai and Quillen in \cite{mathai}. In their work, indeed, they defined a Thom form for a vector bundle $\pi: E \longrightarrow (M,g)$ induced by the Riemannian metric $g$, a metric bundle $h_E$ and a connection $\nabla_E$. In particular, for each $\delta > 0$ they found a Thom form whose support is contained on
	 \begin{equation}
	     B_\delta(0_E) := \{w \in E \vert h_E(w,w) \leq \delta^2\}.
	 \end{equation}
	 So, we will consider, on the tangent bundle of a Riemannian manifold $(Y,h)$ the Mathai-Quillen-Thom $\omega_Y$ form induced by $h$ and $\nabla^{LC}_h$. On the other hand, given a map $f:X \longrightarrow Y$, we fix on $f^*TY$ the pullback $F^*\omega_Y$ where $F:f^*(TY) \longrightarrow TY$ is the bundle map induced by $f$.
	 \\As shown in subsection 4.2. of \cite{Spes}, if $h$ has the same bounds of a manifolds of bounded geometry and if $f$ is a smooth Lipschitz map, then there is a constant $C$ such that for each $w$ in $f^*TY$ the norm $\vert f^*\omega_Y\vert_q \leq C$. In this case the operator $\Omega^*_c(TY) \longrightarrow \Omega^*_c(TY)$ is an $\mathcal{L}^2$-bounded operator (Propositions 4.3 and 4.4 of \cite{Spes}).
\section{Stability of cohomologies}
\subsection{A submersion related to a uniform map} 
Let $(X,g)$ and $(Y,h)$ two manifolds of bounded geometry with unbounded ends. Fix $f:(X,g) \longrightarrow (Y,h)$ which is a uniform map isometric on the ends.
\begin{prop}\label{appr}
Assume the existence of a close subset $C$ on $X$ and a number $R$ such that $f_{X \setminus C}$, in local normal coordinates defined on a ball of radius at most $r_i$, the map $f$ has uniformly bounded derivatives of each order. Then, for each $\epsilon >0$ there is a smooth map $f_{\epsilon}: (X,g) \longrightarrow (Y,h)$ such that $f_{\epsilon} \sim f$ and $f_{\epsilon} = f$ on $X \setminus B_{\epsilon}(C)$.
\end{prop}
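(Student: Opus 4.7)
The plan is to build $f_\epsilon$ by local mollification in normal coordinates and patch it to $f$ via a smooth cutoff supported in $B_\epsilon(C)$, then construct the uniform homotopy via geodesic interpolation in $Y$. The two essential ingredients are the uniform positive injectivity radius of $Y$ (available from bounded geometry on $N$ together with the isometry of $f$ on the ends, Remark \ref{rem1}) and the uniform continuity of $f$ (built into the uniform map hypothesis).

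First I would fix a smooth cutoff $\chi:X\to[0,1]$ with $\chi\equiv 1$ on $B_{\epsilon/2}(C)$, $\chi\equiv 0$ on $X\setminus B_{3\epsilon/4}(C)$, and with derivatives uniformly bounded in normal coordinates (constructible from the distance-to-$C$ function composed with a fixed bump, using the bounded geometry of $M$ and the isometry of $f$ on the ends as in the proof of Proposition \ref{blu}). To smooth $f$, I would cover $B_\epsilon(C)$ by a uniformly locally finite family of normal coordinate balls $(U_i,\phi_i)$ of radius $r<\min\{r_{i,X}/2,\,r_{i,Y}/2\}$ chosen small enough that $f(U_i)$ fits inside a single normal chart of $Y$ centered at $y_i:=f(p_i)$. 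On each $U_i$ the composition $F_i:=\exp^{-1}_{y_i}\circ f\circ\phi_i$ is a well-defined vector-valued map into $T_{y_i}Y$; convolving $F_i$ with a Euclidean mollifier of scale $\delta$ and pushing forward via $\exp_{y_i}$ gives a smooth local approximation $\tilde f_{i,\delta}$. Combining the $\tilde f_{i,\delta}$ with a subordinate partition of unity via the Karcher (center of mass) construction in $Y$ yields a globally smooth map $\tilde f_\delta$, and by uniform continuity of $f$ together with standard mollifier estimates $d_Y(f(x),\tilde f_\delta(x))\to 0$ uniformly in $x$ as $\delta\to 0$.

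Next, choosing $\delta$ small enough that $d_Y(f(x),\tilde f_\delta(x))$ is less than a fixed fraction of the injectivity radius of $Y$ for every $x$, I would set
\begin{equation}
f_\epsilon(x):=\exp_{f(x)}\!\bigl(\chi(x)\cdot\exp^{-1}_{f(x)}\tilde f_\delta(x)\bigr).
\end{equation}
By construction $f_\epsilon=\tilde f_\delta$ on $B_{\epsilon/2}(C)$ and $f_\epsilon=f$ on $X\setminus B_{3\epsilon/4}(C)$, while on the transition annulus $B_{3\epsilon/4}(C)\setminus B_{\epsilon/2}(C)\subset X\setminus C$ both $f$ and $\tilde f_\delta$ are smooth (using the assumed smoothness and uniformly bounded derivatives of $f$ off $C$), so $f_\epsilon$ is smooth everywhere. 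The uniform homotopy is
\begin{equation}
H(x,t):=\exp_{f(x)}\!\bigl(t\,\chi(x)\cdot\exp^{-1}_{f(x)}\tilde f_\delta(x)\bigr),
\end{equation}
which equals $f(x)$ for every $t\in[0,1]$ outside $B_{3\epsilon/4}(C)$; its uniform continuity on $X\times[0,1]$ follows from the uniform continuity of $f$ combined with uniform bounds on $\exp$, $\exp^{-1}$, and on $\chi$ and its derivatives.

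The main obstacle is establishing that all these constants (injectivity radii, $C^k$-norms of the local coordinate representatives of $f$, Lipschitz constants of $\exp$ and $\exp^{-1}$, and the derivative bounds on $\chi$) are genuinely uniform in the base point $x\in X$, and in particular uniform as $x$ approaches $\partial M$ and passes into $E_X$. This depends on bounded geometry: on the interior of $M$ it is immediate, on the collar $\mathcal{N}_X(r_X)$ it follows from the product form of $g$ in fibered coordinates (Lemma \ref{lemma0}), and on $E_X$ the isometry of $f$ reduces everything to the corresponding uniform estimates on $Y$. A secondary technicality is ensuring that the local smoothings glue into a single globally smooth map; this is what forces the use of a Karcher mean in $Y$ rather than a naive weighted sum of the $\tilde f_{i,\delta}$, and this averaging is well-defined precisely because the local approximations are uniformly close to $f$ within the common injectivity radius of $Y$.
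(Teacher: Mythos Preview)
Your approach is correct and is essentially a detailed reconstruction of the argument the paper defers to (Proposition~1.7 and Corollary~1.8 of \cite{Spes}); the paper's own proof is a two-line citation plus the reduction that, because $f$ is an isometry on the ends, one may take $C\subset M$ and then work entirely inside a manifold of bounded geometry. You use this reduction implicitly (all your uniform estimates rely on bounded geometry), but you would gain clarity by stating it up front: since $f$ is smooth with uniformly bounded derivatives on $E_X$, the set $C$ may be assumed to lie in $M$, and then $B_\epsilon(C)\subset M$, $f(B_\epsilon(C))\subset N$, so the exponential maps, injectivity radii, and Karcher averages are only ever invoked inside the bounded-geometry pieces $M$ and $N$---no separate argument on $E_X$ or near $\partial M$ is required.

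One small point worth tightening: your formula $f_\epsilon(x)=\exp_{f(x)}\!\bigl(\chi(x)\exp^{-1}_{f(x)}\tilde f_\delta(x)\bigr)$ involves the possibly non-smooth $f(x)$ even where $\chi\equiv 1$. You correctly observe that on $\{\chi=1\}$ the formula collapses to $\tilde f_\delta$, so smoothness there is not an issue; just make sure to phrase the definition piecewise (equal to $\tilde f_\delta$ on $B_{\epsilon/2}(C)$, to $f$ outside $B_{3\epsilon/4}(C)$, and given by the interpolation formula on the annulus where $f$ is already smooth) so that the global smoothness is transparent rather than hidden behind a pointwise identity.
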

\begin{proof}
This is essentially the same proof given in Proposition 1.7 and in Corollary 1.8 of \cite{Spes}. Observe, indeed, that since $f$ is a quasi-isometry on the ends, then $C \subset M$. Then, since the proof of Proposition 1.7 works locally, we can assume that $C$ is a subset of a manifold of bounded geometry and we can consider $f$ as a $C^0_{b}$-map.
\end{proof}
Fix $f:(X = M \cup E_X ,g) \longrightarrow (Y = N \cup E_Y,h)$ be a map which is isometric on unbounded ends. Let $r_0$ be a number such that the restriction of $f$ to a $r_0$-collar of $\partial M$ is an isometry. 
Fix $\delta = \frac{1}{4}r_0$ and denote by $\pi:f^*T^\delta Y \longrightarrow X$ be the fiber bundle given by the vectors on $f^*T^\delta Y$ whose norm is less or equal to $\delta$. Consider the map
\begin{equation}
\begin{split}
p_f: dom(p_f) \subset &f^*T^\delta Y \longrightarrow Y \\
v_{f(p)} &\longrightarrow exp_{f(p)}(v_{f(p)}).
\end{split}
\end{equation}
If $f$ is not a smooth map, then we will use an $\epsilon$-approximation $f_{\epsilon}$ of it.
\\Notice that outside $\pi^{-1}(M) \subseteq f^*T^\delta Y$ the map $p_f$ could be not defined on all $f^*T^\delta Y$ since $Y$ is not complete. However it is defined on a neighborhood of the $0$-section.
\\Since Lemma 3.3 of \cite{Spes} we know that, on its domain, $p_f$ is a smooth submersion, such that $p_{f}(0_{f(p)}) = f(p)$ and that if $f$ has on $M$ uniformly bounded derivatives with respect to some coordinates $\{x^i\}$ on $X$ and $\{y^j\}$ on $Y$, the same happens to $p_f$ with respect to the fibered coordinates $\{x^i, \mu^j\}$ referred to the frame $\{\frac{\partial}{\partial x^i}\}$. This implies that on
\begin{equation}\label{A_f}
A_f := \{(p,w_{f(p}) \in f^*T^\delta Y \mbox{  such that   } f(p) \subset N \mbox{   and   } \vert w_{f(p)} \vert_h \leq d(f(p), \partial N) \}
\end{equation}
the map $p_f$ is Lipschitz. This can be proved exactly as point 5 of Lemma 3.5 of \cite{Spes} is proved: it is a consequence of Lemma 3.4 of the work of Schick \cite{Schick}.
\\Let $\psi: \mathbb{R} \longrightarrow \mathbb{R}$ be the map defined as follows:
\begin{equation}
\psi(t) := \begin{cases} 0 \mbox{     if      } t \leq 0 \\
1 \mbox{     if      } t \geq \delta \\
P(\frac{t}{\delta})  \mbox{     if      } t \in (0, \delta) \end{cases}
\end{equation}
where $P(s) := -20s^7 + 70s^6 -84s^5 +35s^4$. Thanks to Lemma 2.34 of the work of Eldering \cite{bound}, we can fix $\epsilon < \frac{\delta}{3}$ and $\nu \leq \frac{r_0}{4}$ and to find a map $\psi_{\epsilon}:  \mathbb{R} \longrightarrow \mathbb{R}$ such that 
\begin{itemize}
\item $\psi = \psi_{\epsilon} $ on $\mathbb{R} \setminus (\delta - \nu, \delta + \nu)$,
\item  $d(\psi_\epsilon(t), \psi(t))\leq \epsilon$ for each $t$,
\item $\psi$ is a smooth function on $\mathbb{R} \setminus \{0\}$,
\item $\psi_\epsilon$ is a map of class $C^3$ on $\mathbb{R}$.
\end{itemize}
Then we can define the map $\phi: N \longrightarrow \mathbb{R}$ as 
\begin{equation}
\phi(q) := \begin{cases} 0 \mbox{     if      } q \notin N \\
1 \mbox{     if      } q \in N \setminus \mathcal{N}_X(r_0) \\
\psi_\epsilon(d(q, \partial N))  \mbox{     if      } q \in \mathcal{N}_X(r_0) \end{cases}
\end{equation}
By some easy computations, it is possible to notice that $\phi$ is a Lipschitz function in $C^3(N)$ and it is smooth outside $\partial N$. Indeed, because of Remark \ref{rem1}, if we fix some collar coordinates $(x,t)$ on $\mathcal{N}_Y(r_0)$, then $\phi$ only depends on $t$. 
\\Let $\tilde{p}_f: f^*T^\delta Y \longrightarrow Y$ be the map defined as
\begin{equation}
\tilde{p}_f(v_{f(p)}) := p_{f}(\phi(f(p)) \cdot v_{f(p)}).
\end{equation}
Observe that $\tilde{p}_f$ is defined on all $f^*T^\delta Y$ and it is a submersion. Indeed, on $M$, $\tilde{p}_f$ is a submersion because $\phi \neq 0$ and $p_f$ is a submersion; on $X \setminus M$, $\tilde{p}_f$ is a submersion since $f$ is an isometry.
\begin{prop}
Let $f:(X,g) \longrightarrow (N,h)$ be a uniform map isometric on the unbounded ends. Let $g_{S,f}$ be the Sasaki metric induced by $g$, $f^*h$ and $f^*\nabla^{LC}_h$ on $f^*TN$. The map $\tilde{p}_f: (f^*T^\delta, g_{S,f})  \longrightarrow (Y,h)$ is a R.-N.-Lipschitz map.
\end{prop}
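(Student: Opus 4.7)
The plan is to verify the two defining properties of a R.-N.-Lipschitz map separately. First, I would appeal to Proposition \ref{appr} to reduce to the case in which $f$ is smooth with uniformly bounded derivatives on $M$ in normal coordinates; since $f$ is already isometric on the ends, the smooth approximation $f_\epsilon$ coincides with $f$ outside a bounded subset of $M$, so the corresponding $\tilde p_{f_\epsilon}$ differs from $\tilde p_f$ only over a relatively compact region and the two maps are simultaneously R.-N.-Lipschitz. I would then factor $\tilde p_f = p_f \circ \Phi$, where $\Phi(v_{f(p)}) = \phi(f(p))\cdot v_{f(p)}$, and analyse the two factors.

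For the Lipschitz estimate, the scaling map $\Phi$ is Lipschitz in $g_{S,f}$: the explicit Sasaki-metric expression of Remark \ref{riemsub} applied to the pullback connection $f^*\nabla^{LC}_h$ shows that scaling fibers by a Lipschitz function bounded by $1$ (which is what $\phi \circ f$ provides, since $\phi$ is Lipschitz by the construction of $\psi_\epsilon$) is globally Lipschitz on $f^*T^\delta Y$. The vanishing order of the polynomial $P$ at $0$ guarantees that $\Phi$ lands in the set $A_f$ from (\ref{A_f}), on which the same argument used in Lemma~3.5(5) of \cite{Spes}—combining the uniformly bounded derivatives of $\exp$ from Lemma 3.4 of \cite{Schick} with the uniform derivative bounds on $f|_M$—shows that $p_f$ is Lipschitz. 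Over $E_X$, where $f$ is an isometry and the identification $F$ of Remark \ref{respect} becomes an isometry between the Sasaki metrics on $f^*TY$ and $TY$, the map $\tilde p_f$ corresponds to $w_q\mapsto\exp_q(\phi(q)w_q)$ on $T^\delta Y$, which is Lipschitz by bounded geometry of $Y$ near $\overline{N}$. Since $f$ is an isometry on the $r_0$-collar, the two estimates glue to a global Lipschitz constant.

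For the Fiber Volume I would use the equivalent characterization from Remark 12 of \cite{Spes}: it suffices to bound $\mu_{f^*T^\delta Y}(\tilde p_f^{-1}(A)) \leq C\, \mu_Y(A)$ uniformly in a measurable $A\subseteq Y$. Away from $\mathcal{N}_Y(r_0)$ one splits into two pieces: on $N\setminus\mathcal{N}_Y(r_0)$ the cutoff $\phi$ equals $1$, so $\tilde p_f = p_f$, and Proposition \ref{cosa} together with the uniform control on $\mathrm{Vol}_{f^*T^\delta Y}/p_f^*\mathrm{Vol}_Y$ coming from the bounded derivatives of $f$ and $\exp$ yields a uniform bound (the fibers of $p_f$ have uniformly bounded $g_{S,f}$-diameter). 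On $E_X$ side, $\tilde p_f$ factors as $f\circ \pi'$ composed with a bounded correction, where $\pi':f^*T^\delta Y|_{E_X}\to E_X$ is a Riemannian submersion which is R.-N.-Lipschitz by Proposition \ref{star}, and $f|_{E_X}$ is an isometry; composition preserves R.-N.-Lipschitz. On the transition region $\mathcal{N}_Y(r_0)$ both descriptions apply and are compatible through the explicit Sasaki components (\ref{metri}).

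The main obstacle is to produce a single uniform Fiber Volume bound across the interpolation region $\mathcal{N}_Y(r_0)$, where $\phi$ moves from $0$ to $1$ and two different local descriptions of $\tilde p_f$ must be reconciled. The technical ingredients one must combine here are the collar-form of $f$ from Remark \ref{rem1}, which makes the Sasaki metric on $f^*TY$ quasi-isometric in a controlled way to the Sasaki metric on $TY$ over the collar, the uniformly bounded derivatives of $\psi_\epsilon$ coming from the explicit choice of $P$, and the inclusion $\Phi(f^*T^\delta Y)\subseteq A_f$; once these are in place the integrand in Proposition \ref{cosa} is uniformly bounded on the collar by a determinant of a matrix whose entries are uniformly controlled, and the proof concludes.
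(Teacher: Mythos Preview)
Your Lipschitz argument via the factorisation $\tilde p_f = p_f\circ\Phi$ is fine and matches the paper's reasoning. The gap is in the Fiber Volume estimate on the interpolation region, and it is exactly the point you flag as ``the main obstacle'': your proposed resolution is incorrect.

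You assert that on the collar ``the integrand in Proposition~\ref{cosa} is uniformly bounded \ldots\ by a determinant of a matrix whose entries are uniformly controlled''. This is false. The scaling $v\mapsto\phi(f(p))\,v$ has Jacobian $\phi(f(p))^n$ in the fibre direction, so the quotient $\dfrac{Vol_{f^*T^\delta Y}}{\tilde p_f^{\,*}Vol_Y}$ blows up like $\phi^{-n}$ as one approaches $\partial N$; the entries of the relevant matrix are controlled from above but \emph{not} from below. Consequently no pointwise bound on the integrand is available, and the argument you sketch does not close.

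The paper's proof shows instead that this blow-up is exactly compensated by a shrinking of the domain of integration. Writing $q=(y_0,t_0)$ in collar coordinates and factoring $\tilde p_f = pr_N\circ\tilde t_f$ with $\tilde t_f(w_{f(x,t)})=(x,t,\tilde p_f(w_{f(x,t)}))$, one computes $Vol_{\tilde t_f}(x,t,q)\le C\,\psi_\epsilon(t)^{-n}$. The crucial step is then to show that any $(x,t)$ in $\pi(\tilde p_f^{-1}(q))$ satisfies $|t-t_0|\le\delta\,\psi_\epsilon(t)$, which on the inner collar $\mathcal N_X(\tfrac{3}{10}\delta)$ (where $\psi_\epsilon(t)\le\tfrac12$) forces $t$ and $t_0$ to be multiplicatively comparable; because $\psi_\epsilon$ is a monotone degree-$7$ polynomial near $0$, this transfers to $\psi_\epsilon(t)\sim\psi_\epsilon(t_0)$ and to $\mu_M\bigl(\pi(\tilde p_f^{-1}(q))\bigr)\le J_1\,\psi_\epsilon(t_0)^n$. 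The integral is then bounded by $J_1\,\psi_\epsilon(t_0)^n\cdot J_2\,\psi_\epsilon(t_0)^{-n}$, which is uniform in $q$. It is precisely this cancellation---and the use of the specific polynomial growth of $P$ to pass from bounds on $t$ to bounds on $\psi_\epsilon(t)$---that your proposal is missing.
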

\begin{proof}
Observe that $\tilde{p}_f$ is a Lipschitz map since $p_f$ is Lipschitz on the subset $A_f$ defined in \ref{A_f}, since $f$ is an isometry on the unbounded ends and since $\phi$ is Lipschitz. We have to show that the Fiber Volume is uniformly bounded. Observe that if $q \in Y \setminus N$, then $p_f^{-1}(q)$ is just the fiber over $p_0 = f^{-1}(q)$ of $f^*T^\delta N$. Then, since $f$ is an isometry on the ends, the Fiber Volume is equal to the volume of a ball of radius $\delta$ in $\mathbb{R}^n$.
\\Fix $q$ in $N$ such that $B_\delta(q) \cap \mathcal{N}_Y(\delta) = \emptyset$. Observe that for each $p_0$ in $X$, since $\phi \leq 1$,
\begin{equation}
d(f(p_0), \tilde{p}_f(w_{f(p_0)})) \leq \delta \cdot f^*\phi(p_0) \leq \delta.
\end{equation}
This means that,
\begin{equation}
\tilde{p}_f^{-1}(q) \subset \pi^{-1}f^{-1}(B_\delta(q))
\end{equation}
Then, since $B_\delta(q) \cap \mathcal{N}_Y(r_0) = \emptyset$, we obtain that $f^*\phi$ on $f^{-1}(B_\delta(q))$ is equal to $1$. Moreover, since $\pi(\tilde{p}_f^{-1}(q))$ is contained in $f^{-1}(B_\delta(q))$, and 
\begin{equation}
\emptyset = f^{-1}(B_\delta(q)) \cap f^{-1}( \mathcal{N}_Y(r_0) ) = f^{-1}(B_\delta(q)) \cap \mathcal{N}_X(r_0)
\end{equation}
this means that $\tilde{p}_f$, on the fiber of $q$, is equal to the submersion $p_f$ defined in \cite{Spes}. So the Fiber Volume of $q$ can be computed exactly as in Corollary 4.2 of \cite{Spes} and it is bounded by a constant which only depends on the curvatures of $M$ and $N$.
\bigskip
\\Finally we have to calculate the Fiber Volume of a point $q$ in $N$ such that $B_{\delta}(q) \cap \mathcal{N}_Y(\delta) \neq \emptyset$. Since $\delta = \frac{r_0}{4}$ we obtain that $B_{\delta}(q) \cap N \subset \mathcal{N}_Y(r_0)$,
\\Since $\tilde{p}_f = f \circ \pi$ outside $M$ and since $f_{\vert E_X}$ is an isometry, then $\tilde{p}_f^{-1}(q)$ is contained in $\pi^{-1}(\mathcal{N}_X(r_0))$. Let us define the map 
\begin{equation}
    \begin{split}
    \tilde{t}_f:\pi^{-1}(\mathcal{N}_X(r_0)) &\longrightarrow  M \times N \\
    w_{f(x,t)} &\longrightarrow (x, t, \tilde{p}_f(w_{f(x,t)}))
    \end{split}
\end{equation}
Observe that $\tilde{p}_f = pr_N \circ \tilde{t}_f$ where $pr_N : M \times N \longrightarrow N$ is the projection on the second component.
\\Notice that, since Proposition \ref{compo}, we obtain that 
\begin{equation}
Vol_{\tilde{p}_f}(q) = \int_{X} Vol_{\tilde{t}_f}(p, q) d\mu_X.
\end{equation}
Let us focus on $Vol_{\tilde{t}_f}$. Fix some normal coordinates $\{V, y^i\}$ around $q$ and some collar coordinates $\{U, x^j, t\}$ around $p_0  = (x_0, t_0) = f^{-1}(q)$. Assume that $f(U) \subseteq V$. Then on  $\pi^{-1}(U) \subset f^*T^\delta Y$ we have the fibered coordinates $\{x^i, t, \mu^j\}$ where $\{\mu^j\}$ are the coordinates relative to the frame $\{\frac{\partial}{\partial y^i}\}$. Observe that
\begin{equation}
\tilde{t}_f(x_0, t_0, \mu^j) = (x_0, t_0, \psi_\epsilon(t_0) \cdot \mu^j),
\end{equation}
where $\psi_\epsilon: \mathbb{R} \longrightarrow \mathbb{R}$ is the pa we used to define $\phi$. 
\\Moreover, since the Christoffell symbols of the pullback connection $f^*\nabla^{LC}_h$ vanish in $(x_0, t_0)$, then the volume form of $f^*T^\delta Y$ in $(x_0, t_0, \mu^j)$ is given by
\begin{equation}
Vol_{f^*T^\delta Y}(x_0, t_0, \mu^j) = det(g_{ij}(x_0, t_0))dx^I\wedge dt \wedge d\mu^J.    
\end{equation}
Observe that $det(g_{ij}(x_0, t_0))$ is uniformly bounded since $M$ is an open of bounded geometry. Let us study the Volume form on $M \times N$: this is given by
\begin{equation}
Vol_{M \times N}(x, t, y)  = det(g_{ij}(x,t))\cdot det(h_{rs}(y)) dx^I \wedge dt \wedge dy^J.
\end{equation}
Observe that $\tilde{t}_f$ is a diffeomorphism with its image. This means that its Fiber Volume is null outside the image of $\tilde{t}_f$. Moreover, on $im(\tilde{t}_f)$, the Fiber Volume is given by 
\begin{equation}
    [\tilde{t}_f^{-1}]^*(\frac{Vol_{f^*T^\delta Y}}{\tilde{t}_f^*(Vol_{M \times N})})
\end{equation}
Notice that, in $(x_0, t_0, q)$, we have
\begin{equation}
    \frac{Vol_{f^*T^\delta Y}}{\tilde{t}_f^*(Vol_{M \times N})}(x_0, t_0, \mu^j) = [det(h_{rs}(\psi_\epsilon(t_0) \cdot y))]^{-1} \frac{1}{\psi_\epsilon(t_0)^n}.
\end{equation}
This means that, for each $(x, t, q)$ in $im(\tilde{t}_f)$,
\begin{equation}\label{bound1}
 Vol_{\tilde{t}_f}(x, t, q) \leq C\cdot \frac{1}{\psi_\epsilon(t)^n}.
\end{equation}
Let us decompose $M$ as the union of $\mathcal{N}_X(\frac{3}{10}\delta)$ and $M_{0} := M \setminus \mathcal{N}_X(\frac{3}{10}\delta)$. We are choosing $\frac{3}{10}\delta$ because on $\mathcal{N}_X(\frac{3}{10}\delta)$ the inequality $\phi(x, t) = \psi_\epsilon(t) \leq \frac{1}{2}$ holds. This inequality will be useful later.
\\However we can see the Fiber Volume of $\tilde{p}_f$ as the integral
\begin{equation}
    Vol_{\tilde{p}_f}(q) = \int_{\mathcal{N}_X(\frac{3}{10}\delta)}Vol_{\tilde{t}_f}(p, q) d\mu_M + \int_{M_0} Vol_{\tilde{t}_f}(p , q) d\mu_M.
\end{equation}
Notice that, thanks to (\ref{bound1}), on $M_0$ the Fiber Volume of $\tilde{t}_f$ is uniformly bounded by $C_\psi$ since $\psi_\epsilon(t)$ is bounded from below by $\phi(x, \frac{3}{10}\delta) = \psi_\epsilon(\frac{3}{10}\delta)$. Moreover, since $f$ is uniformly proper, then also $\pi \circ f$ and $\tilde{p}_f$ are uniformly proper and so this means that there is a number $C$ (which does not depends on $q$) such that
\begin{equation}
diam(\pi(\tilde{p}_f^{-1}(q))) \leq C.
\end{equation}
As a consequence of this and of the bounded geometry of $M$ we obtain a constant $K$ such that
\begin{equation}
\mu_M(\pi(\tilde{p}_f^{-1}(q)) \cap M_0) \leq K.
\end{equation}
Finally, observe that $im(\tilde{t}_f) \cap [M\times\{q\}] = \pi(\tilde{p}_f^{-1}(q)) \times \{q\}$ and we obtain that
\begin{equation}
   \int_{M_0} Vol_{\tilde{t}_f}(p , q) d\mu_M \leq \int_{\pi(\tilde{p}_f^{-1}(q))} Vol_{\tilde{t}_f}(p , q) d\mu_M \leq K \cdot C_\psi.
\end{equation}
We only have to prove the boundedness of the integral over $\mathcal{N}_X(\frac{3}{10}\delta).$
\\Recall that if $(x, t, q)$ is not in $im(\tilde{t}_f)$, then Fiber Volume of $Vol_{\tilde{t}_f}(x, t, q) = 0.$ Observe that $im(\tilde{t}_f)$ is given by the elements $(x,t,y)$ of $M \times N$ such that there is a tangent vector $w_{f(x,t)}$ on $f(x,t)$ whose norm is less or equal to $\delta \cdot \phi(t)$ and which satisfies
\begin{equation}
\tilde{p}_f(w_{f(x,t)}) = exp_{f(x,t)}(w_{f(x,t)}) = q = f(x_0, t_0).
\end{equation}
This means that, if $(x,t,y)$ is in $im(\tilde{t}_f)$, then 
\begin{equation}\label{due}
    d_M((x_0,t_0), (x,t)) = d_N(f(x_0, t_0), f(x,t)) =  d_N(q, f(x,t)) \leq \vert\vert w_{f(x,t)} \vert\vert \leq \delta \cdot \psi_\epsilon(t).
\end{equation}
Observe that for each $(x,t)$ in $\mathcal{N}_X(r_0)$
\begin{equation}
    \vert t - t_0 \vert = d_M((x_0,t_0),(x_0,t)) \leq d_M((x_0,t_0), (x,t))
\end{equation}
indeed, if $(x_1, t)$ is a point such that $d_M((x_0,t_0), (x_1,t)) < \vert t - t_0 \vert$ then $d((x_1, t), \partial M) < t$ which is a contradiction with Lemma \ref{lemma0}. Moreover, on $\mathcal{N}_X(\frac{3}{10}\delta)$, we also have that $\psi_\epsilon(t) \leq \frac{1}{2}t$ for each $(x,t)$. Then, because of the inequality (\ref{due}), if $(x,t,y)$ is in $im(\tilde{t}_f) \cap \mathcal{N}_X(\frac{3}{10}\delta)\times N$, then
\begin{equation}\label{bound2}
    \vert t - t_0 \vert \leq \delta \cdot \psi_\epsilon(t) \leq \frac{\delta}{2}t \implies \frac{2}{2+\delta} t_0 \leq t \leq \frac{2}{2 - \delta} t_0
\end{equation}
and so, since $\psi_\epsilon$ is a monotone polynomial of degree $7$, we obtain that
\begin{equation}\label{tre}
 d_M((x_0,t_0), (x,t)) \leq \delta \cdot \psi_\epsilon(t) \leq \delta \cdot \psi_\epsilon(\frac{2}{2 - \delta} t_0) \leq \delta \cdot \frac{2^7}{(2 - \delta)^7} \cdot \psi_\epsilon (t_0).
\end{equation}
This means that $\pi(\tilde{p}_f^{-1}(q))$ is contained on a ball on $M$ of radius $\delta \cdot \frac{2^7}{(2 - \delta)^7} \cdot \psi_\epsilon(t_0)$ where $t_0 = d(q, \partial N)$ and so 
\begin{equation}
    \mu_M(\pi(\tilde{p}_f^{-1}(q))) \leq J_1 \cdot \psi_\epsilon(t_0)^n.
\end{equation}
Moreover, if $(x,t,y)$ is an element of $im(\tilde{t}_f) \cap \mathcal{N}_X(\frac{3}{10}\delta)\times N$, thanks to formula (\ref{bound2}) we obtain
\begin{equation}
    \frac{1}{\psi_\epsilon(t)^n} \leq \frac{1}{\psi_\epsilon(\frac{2}{2+\delta}t_0)^n} = \frac{(2 + \delta)^{7n}}{2^{7n}} \cdot \frac{1}{\psi_\epsilon(t_0)^n} \leq J_2 \cdot \frac{1}{\psi_\epsilon(t_0)^n}.
\end{equation}
So we conclude as follows: given $q = (y_0, t_0)$ in $N$ we obtain 
\begin{equation}
\int_{\mathcal{N}_X(\frac{3}{10}\delta)}Vol_{\tilde{t}_f}(p, q) d\mu_M \leq J_1 \cdot \phi(t_0)^n \cdot  J_2 \cdot \frac{1}{\phi(t_0)^n} \leq B.
\end{equation}
\end{proof}
\subsection{The pullback operator}
Let $\omega$ be the Mathai-Quillen-Thom form of $f^*TY$ with support contained in $f^*T^\delta Y$ defined in subsection \ref{form}. Then, for each uniform map isometric on the unbounded ends, we can define the pullback operator
\begin{equation}
    T_f(\alpha) := \int_{B^\delta} \tilde{p}_f^*\alpha \wedge \omega.
\end{equation}
\begin{prop}\label{T_f}
The operator $T_f$ satisfies the following properties:
\begin{enumerate}
    \item $T_f$ is $\mathcal{L}^2$-bounded,
    \item $T_f(dom(d_{min})) \subset dom(d_{min})$ and $T_f(dom(d_{max})) \subset dom(d_{max})$,
    \item $T_f$ and $d$ commute.
\end{enumerate}
\end{prop}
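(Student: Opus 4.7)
The plan is to write $T_f$ as the composition $T_f = \pi_\star \circ e_\omega \circ \tilde{p}_f^{*}$, where $\tilde{p}_f^{*}$ is the pullback along the R.-N.-Lipschitz submersion constructed in the previous subsection, $e_\omega$ is wedging with the Mathai--Quillen--Thom form of $f^*TY$, and $\pi_\star$ is the integration along the fibers of $\pi\colon f^*T^\delta Y\to X$. Each of the three properties will then be read off from the corresponding property of one or more of these factors.

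For (1), $\tilde{p}_f^{*}$ is $\mathcal{L}^2$-bounded by the previous proposition together with the fact that R.-N.-Lipschitz maps induce bounded pullbacks; $e_\omega$ is bounded because, as recalled in Subsection \ref{form}, the Mathai--Quillen--Thom form constructed on $f^*TY$ from the pullback of a bounded-geometry target has uniformly bounded pointwise norm; and $\pi_\star$ is bounded by Proposition \ref{star}. Hence $T_f$ is bounded as a composition.

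For (3), I would first check the commutation on $\Omega^*_c(Y)$ and then extend. On smooth forms, $d$ and $\tilde{p}_f^{*}$ commute by naturality of the exterior derivative; since $\omega$ is closed, $d(\tilde{p}_f^{*}\alpha\wedge\omega)=d\tilde{p}_f^{*}\alpha\wedge\omega$; finally, because $\mathrm{supp}(\omega)$ is contained strictly inside the disk bundle (one chooses a Mathai--Quillen--Thom form supported in $B_{\delta_0}(0)$ with $\delta_0<\delta$), the form $\tilde{p}_f^{*}\alpha\wedge\omega$ is vertically compactly supported in the interior of $f^*T^\delta Y$, and then $d$ commutes with $\pi_\star$ by the Bott--Tu identity (no boundary contributions). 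Putting these together gives $dT_f\alpha=T_f d\alpha$ on $\Omega^*_c(Y)$.

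For (2), I would invoke Corollary \ref{corollary1} with $A=T_f$ and commutator $B=0$ (which is trivially $\mathcal{L}^2$-bounded). The remaining hypotheses to verify are that $T_f$ and $T_f^\dagger$ send compactly supported smooth forms into $C^k_c$ forms for some $k>1$. Regularity follows because the cutoff $\psi_\epsilon$ used to build $\tilde{p}_f$ is $C^3$, so $\tilde{p}_f^{*}$ of a smooth form is at least $C^2$, and $e_\omega$ and $\pi_\star$ preserve this regularity. Compactness of support uses that $\tilde{p}_f$ is proper on the disk bundle (it agrees with $f\circ\pi$ outside $M$ and $f$ is uniformly proper) and that $\pi$ has compact fibers, so $\mathrm{supp}(T_f\alpha)$ is contained in $\pi\bigl(\tilde{p}_f^{-1}(\mathrm{supp}(\alpha))\cap B_{\delta_0}\bigr)$, which is compact. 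For the dagger, writing $T_f^\dagger=(\tilde{p}_f^{*})^\dagger\circ r_\omega\circ \pi^{*}$ (with $\pi^{*}$ the pullback under $\pi$, $r_\omega$ the left-wedge by $\omega$ which is the dagger of $e_\omega$, and $(\tilde{p}_f^{*})^\dagger$ the integration along the fibers of $\tilde{p}_f$) the same regularity and support analysis applies; the fibers of $\tilde{p}_f$ meet the support of $\omega$ in compact sets by the Fiber Volume estimate proved in the previous proposition.

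The main obstacle I expect is the support/regularity verification for $T_f^\dagger$: unlike $\pi$, the submersion $\tilde{p}_f$ is not a bundle projection, so one must argue carefully that integration along its fibers, applied to a compactly supported smooth input already multiplied by the compactly supported form $\omega$, produces a $C^k_c$ form on $Y$. Once this is in place, Corollary \ref{corollary1} delivers (2) immediately.
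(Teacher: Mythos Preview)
Your approach to (2) and (3) via Corollary \ref{corollary1} with $B=0$ is exactly what the paper does, and your discussion of regularity and supports is along the right lines.

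There is, however, a genuine gap in your argument for (1). You assert that $e_\omega$ is $\mathcal{L}^2$-bounded because the Mathai--Quillen--Thom form has uniformly bounded pointwise norm, citing Subsection \ref{form}. But the bound recalled there requires that the target metric $h$ satisfy the bounds of a manifold of bounded geometry, and in the present setting $(Y,h)$ is only a manifold of bounded geometry \emph{with unbounded ends}: on $E_Y$ the geometry is uncontrolled, and consequently $\vert\omega\vert_p$ need not be uniformly bounded over $\pi^{-1}(E_X)\subset f^*T^\delta Y$. The paper flags this explicitly as ``the main problem'' and does \emph{not} claim that $e_\omega$ is globally bounded.

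The fix the paper uses is to split $\alpha = \chi_N\alpha + \chi_{Y\setminus N}\alpha$. On the piece $\chi_{Y\setminus N}\alpha$, one observes that over $E_X$ the cutoff $\phi$ vanishes, so $\tilde{p}_f=f\circ\pi$ and hence $T_f(\chi_{Y\setminus N}\alpha)=\chi_{X\setminus M}f^*\alpha$; since $f$ is an isometry on the ends this piece is handled directly, with no appeal to $e_\omega$. On the piece $\chi_N\alpha$, the support of $\tilde{p}_f^*(\chi_N\alpha)$ lies in $\pi^{-1}(\overline{M})$, and \emph{there} the bounded geometry of $M$ and $N$ does give a uniform bound on $\vert\omega\vert$, so the restricted operator $e_\omega\colon\mathcal{L}^2(\overline{M})\to\mathcal{L}^2(\overline{M})$ is bounded. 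Your factor-by-factor argument works once you localize to $\pi^{-1}(M)$; it fails as a global statement.
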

\begin{proof}
Let $\chi_{N}$ and $\chi_{Y \setminus N}$ be the characteristic functions relative to $M$ and to $X \setminus M$. Observe that if $\alpha$ is a $\mathcal{L}^2$-form, then $\chi_N \alpha$ and $\chi_{Y \setminus N}\alpha$ are  $\mathcal{L}^2$-forms and
\begin{equation}
\vert \vert \alpha \vert \vert^2 = \vert \vert \chi_N \alpha \vert \vert^2 + \vert \vert \chi_{ Y \setminus N}\alpha \vert \vert. 
\end{equation}
Observe that
\begin{equation}
T_f(\chi_{Y \setminus N}\alpha) = \chi_{X \setminus M} f^*\alpha   
\end{equation}
and so, since $f$ on $X \setminus M$ is an isometry, we have that
\begin{equation}
\vert \vert T_f(\chi_{Y \setminus N}\alpha) \vert \vert^2 = \vert \vert \chi_{ Y \setminus N}\alpha \vert \vert^2.
\end{equation}
Let us focus on $\chi_N \alpha$. Observe that
\begin{equation}
T_f (\chi_N \alpha) = pr_{M\star} \circ e_\omega \circ \tilde{p}_f^* (\chi_N \alpha)
\end{equation}
where $e_\omega(\beta) := \beta \wedge \omega$ and $pr_{M\star}$ is the integration along the fiber of $f^*(T^\delta N)$.
\\We already know that $\tilde{p}_f^*$ is a R.-N.-Lipschitz map and so this means that $\tilde{p}_f^*$ is a $\mathcal{L}^2$-bounded operator. Moreover, since Proposition 3.4. of \cite{Spes}, we also know that $pr_{M\star}$  is $\mathcal{L}^2$-bounded. The main problem is $e_\omega$: indeed, since the norm $\vert \omega \vert_p$ could be not uniformly bounded in $p$ in $X$, the operator $e_\omega$ can be not $\mathcal{L}^2$-bounded. However, observe that on $\pi^{-1}(M)$ the norm $\vert \omega \vert_p$ actually is uniformly bounded since $M$ and $N$ have bounded geometry. Then, as a consequence of Proposition 4.4, if we restrict $e_\omega: \mathcal{L}^2(\overline{M}) \longrightarrow \mathcal{L}^2(\overline{M})$, we obtain a $\mathcal{L}^2$-bounded operator. Moreover we also have that $\tilde{p}_f^* (\chi_N \alpha)$ has support contained in $\pi^{-1}(\overline{M})$ and so this means that
\begin{equation}
\vert \vert T_f(\chi_{N}\alpha) \vert \vert^2 \leq C \cdot \vert \vert \chi_{N}\alpha \vert \vert^2.
\end{equation}
Observe that $T_f(\chi_{N}\alpha) = 0$ on $X \setminus M$ and $T_f(\chi_{Y \setminus N}\alpha) = 0$ on $M$. Then we obtain
\begin{equation}
\begin{split}
\vert \vert T_f\alpha \vert \vert^2 &=  \vert \vert T_f\chi_N \alpha \vert \vert^2 + \vert \vert T_f\chi_{Y \setminus N}\alpha \vert \vert^2       \\
&\leq K \cdot \vert \vert \chi_{N}\alpha \vert \vert^2 + \vert \vert \chi_{Y \setminus N}\alpha \vert \vert^2 \\
&\leq \mbox{max}(K, 1) (\vert \vert \chi_{N}\alpha \vert \vert^2 + \vert \vert \chi_{Y \setminus N}\alpha \vert \vert^2)\\
&=  \mbox{max}(K, 1) \vert \vert \alpha \vert \vert^2.
\end{split}
\end{equation}
So we proved 1.
\\Then we will focus on proving that $T_f(dom(d_{maxY})) \subseteq (dom(d_{maxX}))$. 
\\Notice that, since $\tilde{p}_f$ is a map of class $C^3$, the pullback along $\tilde{p}_f:f^*T^\delta N \longrightarrow N$ sends $\Omega^*(N)$ on $C^2(\Lambda^*(f^*T^\delta N))$, which is the space of differential forms of class $C^2$ over $f^*T^\delta N$. Observe that on this space the exterior derivative of a form $\alpha$ is defined exactly as it is defined for smooth forms (pag. 549 of \cite{lang}) and all the properties of $d$ are the same. In particular $\tilde{p}^*_f$ and the exterior derivative operator commute. 
\\The operator $e_\omega(\alpha) := \alpha \wedge \omega$ also commute with the exterior differential and sends $C^2(\Lambda^*(f^*T^\delta N))$ in itself. Finally,  if the support of $\alpha$ is vertically compact, the integration along the fibers $\pi_\star$ of $f^*T^\delta N$ also satisfies $\pi_\star d \alpha = d \pi_\star \alpha$ (the proof is the same of the classical one: see for example Proposition 6.14.1. of \cite{bottu}) and, moreover, if $\alpha$ is a $C^k$-form, the same holds for $\pi_\star \alpha$.
\\So we obtain that if $\alpha$ is a smooth form on $Y$, then $T_f\alpha$ is a $C^2$-form on $X$ and $T_f d \alpha = dT_f\alpha$. Moreover, since $\tilde{p}_f$ is uniformly proper, if $\alpha$ is an element of $\Omega^*_c(N)$, then $T_f\alpha$ is compactly supported. Finally also observe that outside $\partial M$, which has null measure on $X$, $T_f\alpha$ is a smooth differential form.
\\
Observe that $T_f^\dagger =  \tilde{p}_{f\star} \circ r_\omega \circ \pi^*$, where $\tilde{p}_{f\star}$ is the integration along the fibers of $\tilde{p}_{f}$ and $r_\omega(\beta) = \omega \wedge \beta$. Notice that since $\tilde{p}_{f}$ is a $C^3$ map, then for each $q$ in $N$ and for each $p$ in $f^*T^\delta N$ there are a couple of $C^3$-charts $(U, x^1, \ldots., x^n, y^1, \ldots, y^n)$ and $(V, y^1, \ldots, y^n)$ such that $\tilde{p}_{f}(x^1, \ldots., x^n, y^1, \ldots, y^n) = (y^1, \ldots, y^n)$. This implies, by a partition of unity argument, that the integral along the fibers of $\tilde{p}_{f}$ of a smooth form is $C^3$-form. So $d T_f^\dagger \gamma $ is a compactly supported $C^3$-form. Then we can conclude by applying Corollary \ref{corollary1}.
\end{proof}
\subsection{The isomorphisms induced by the pullback}
Let $(X,g)$ and $(Y,h)$ be two manifolds of bounded geometry with unbounded ends and let $f: (M,g) \longrightarrow (N,h)$ be a uniform homotopy equivalence isometric on the ends. In this section we introduce a couple of operators $y: \mathcal{L}^2(Y) \longrightarrow \mathcal{L}^2(Y)$ and $z: \mathcal{L}^2(X) \longrightarrow \mathcal{L}^2(X)$ such that $y(dom(d_{max}) \subseteq dom(d_{max})$, $y(dom(d_{min}) \subseteq dom(d_{min})$, $z(dom(d_{max}) \subseteq dom(d_{max})$, $z(dom(d_{min}) \subseteq dom(d_{min})$ and
\begin{equation}
    1 \pm T_f^\dagger T_f = dy + yd \mbox{    and    } 1 \pm T_fT_f^\dagger = dz + zd
\end{equation}
on the minimal or also in the maximal domain of the exterior derivative operator. In this formula we have a $+$ if $f$ reverses the orientations on $-$ otherwise.
\\
\\In order to define these operators, we need a metric on the bundle $f^*(TY)\oplus f^*(TY)$ over $X$. We consider the generalized Sasaki metric $g_S$ induced by $g$, $h$ and $\nabla^{LC}_h$. In general, even if $f_1 = f_2 = f$, we will denote the bundle $f^*(TY)\oplus f^*(TY)$ by $f^*_1(TY)\oplus f^*_2(TY)$. Moreover, given $i=1,2$, we denote by $pr_i: f^*_1(TY)\oplus f^*_2(TY) \longrightarrow f^*(TY)$ the projection on the $i$-th component, i.e. $pr_i(w_{f_1(p)}\oplus w_{f_2(p)}) := w_{f_i(p)}$ and so we obtain the maps $\tilde{p}_{f,i} := \tilde{p}_f \circ pr_i$ and $\pi_i := \pi \circ pr_i.$
\\Finally we denote by
\begin{equation}
    \mathcal{B}:= \{w_{f_1(p)}\oplus w_{f_2(p)} \in f^*_1(TY)\oplus f^*_2(TY) s.t. \vert w_{f_1(p)}\vert_h \leq \delta, \vert w_{f_2(p)}\vert_h \leq \delta \}.
\end{equation}
\begin{lem}
Assume that $(X,g)$ and $(Y,h)$ are two Riemannian manifolds and let $f:(X,g) \longrightarrow (Y,h)$ be a smooth uniform map isometric on the unbounded ends of $X$. Then there are two $\mathcal{L}^2$-bounded operators $y_0: \mathcal{L}^2(Y) \longrightarrow \mathcal{L}^2(Y)$ and $z_0: \mathcal{L}^2(X) \longrightarrow \mathcal{L}^2(X)$ such that they preserve the minimal and the maximal domains of the exterior derivative operators
\begin{equation}
   \tilde{p}_{f,2}^* - \tilde{p}_{f,1}^* = dy_0 + y_0d
\end{equation}
and finally
\begin{equation}
   \pi_{2}^* - \pi_{1}^* = dz_0 + z_0d
\end{equation}
on the maximal domains of exterior derivative operators.
\end{lem}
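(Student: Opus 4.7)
The plan is to build $y_0$ via the classical cochain homotopy formula applied to a linear homotopy between $\tilde{p}_{f,1}$ and $\tilde{p}_{f,2}$, and to observe that for the case at hand $z_0$ can simply be taken to be $0$.

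First I would define the smooth homotopy $H:[0,1]\times \mathcal{B} \longrightarrow Y$ by $H(s, w_1 \oplus w_2) := \tilde{p}_f\bigl((1-s)w_1 + s w_2\bigr)$. This is well defined because $w_1$ and $w_2$ both live in $T_{f(p)}Y$, so the convex combination makes sense; moreover the norm estimate $|(1-s)w_1 + s w_2|_h \leq \delta$ keeps us inside the domain of $\tilde{p}_f$. By construction $H(0,\cdot) = \tilde{p}_{f,1}$ and $H(1,\cdot) = \tilde{p}_{f,2}$. I would then set $y_0(\alpha) := \int_0^1 \iota_{\partial_s}(H^*\alpha) \, ds$, the standard cochain homotopy operator, which via Cartan's formula satisfies $\tilde{p}_{f,2}^* - \tilde{p}_{f,1}^* = dy_0 + y_0 d$ on smooth compactly supported forms.

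The main difficulty is showing that $y_0$ is $\mathcal{L}^2$-bounded. Here I would factor $H = \tilde{p}_f \circ L$, where $L(s, w_1\oplus w_2) := (1-s)w_1 + s w_2$ is a Lipschitz map into $f^*T^\delta Y$ whose Fiber Volume is uniformly bounded (it integrates over a one-dimensional interval in each fiber). Since the previous proposition already shows that $\tilde{p}_f$ is R.-N.-Lipschitz, Proposition~\ref{compo} and the remark on composition of R.-N.-Lipschitz maps yield that $H$ is R.-N.-Lipschitz too. The operator $y_0$ then factors as pullback $H^*$ followed by contraction with $\partial_s$ and fiber integration along $[0,1]\times\mathcal{B} \to \mathcal{B}$, each of which is $\mathcal{L}^2$-bounded under the standing hypotheses. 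The delicate point, as in the previous proposition, is the analysis near $\partial M$ where $\phi$ degenerates, but the same estimates carry over because the extra parameter $s\in[0,1]$ only multiplies by the length of a compact interval.

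For $z_0$, since we are working with a single map $f$ with $f_1 = f_2 = f$, any $w_1\oplus w_2 \in f_1^*TY \oplus f_2^*TY$ over $p\in X$ satisfies $\pi_1(w_1\oplus w_2) = \pi_2(w_1\oplus w_2) = p$. Hence $\pi_1 = \pi_2$ as maps, $\pi_2^* - \pi_1^* = 0$, and I may simply set $z_0 := 0$. Finally, the preservation of $\mathrm{dom}(d_{min})$ and $\mathrm{dom}(d_{max})$ by both $y_0$ and $z_0$ follows from Corollary~\ref{corollary1} applied to $A := y_0$, with the required $C^k$-regularity of $A$ and $A^\dagger$ coming from the $C^3$-smoothness of $\tilde{p}_f$ established earlier, and with $B := \tilde{p}_{f,2}^* - \tilde{p}_{f,1}^*$ which is $\mathcal{L}^2$-bounded precisely because each $\tilde{p}_{f,i}$ is R.-N.-Lipschitz.
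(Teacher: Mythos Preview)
Your proposal is correct and follows essentially the same route as the paper: you build $y_0$ as the fiberwise integration $\int_0^1$ applied to the pullback along the linear homotopy $H=\tilde p_f\circ L$, and you check $\mathcal{L}^2$-boundedness by showing $H$ is R.-N.-Lipschitz as a composition of R.-N.-Lipschitz maps, exactly as the paper does with its map $A$ and the operator $\int_{0,\mathcal{L}}^1$ from \cite{Spes}; the appeal to Corollary~\ref{corollary1} for preservation of the minimal and maximal domains is also the paper's mechanism.

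The one genuine difference is your treatment of $z_0$. The paper formally constructs $z_0$ via the same recipe, using the homotopy $\pi\circ A$; you instead observe directly that $\pi_1=\pi_2$ (both equal the bundle projection $\mathcal{B}\to X$), so $\pi_2^*-\pi_1^*=0$ and one may take $z_0=0$. This is a legitimate shortcut, and in fact if one unpacks the paper's construction one sees that $\pi\circ A$ is constant in $s$, so its pullback has no $ds$-component and the paper's $z_0$ vanishes identically as well. Your observation makes this explicit and saves a paragraph; the paper's phrasing has the advantage of treating $y_0$ and $z_0$ uniformly, which would matter in a setting where $f_1\neq f_2$.
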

\begin{proof}
Let $\int_{0,\mathcal{L}}^1 : \Omega^*(\mathcal{B} \times [0,1]) \longrightarrow \Omega^*(\mathcal{B})$ be the operator defined in Lemma 4.13 of \cite{Spes}. It is defined as follows: if $\alpha$ is a $0$-form with respect to $[0,1]$, then
		\begin{equation}
		\int_{0,\mathcal{L}}^{1}\alpha := \int_{0,\mathcal{L}}^{1} g(x,t)p^*\omega =  0
		\end{equation}
	and, if $\alpha$ is a $1$-form with respect to $[0,1]$,
		\begin{equation}
		\int_{0,\mathcal{L}}^{1}\alpha := (\int_{0}^{1} f(x,t)dt)\omega.
		\end{equation}
		This is an $\mathcal{L}^2$-bounded operator and it sends compactly supported $C^1$-forms in   $C^1_c(\Lambda^*(T^*\mathcal{B})).$ Moreover, if $\alpha$ is a $C^1$-form on $\mathcal{B} \times [0,1]$, then
		\begin{equation}
		    j_1^*\alpha - j_0^*\alpha = d\int_{0,\mathcal{L}}^{1}\alpha + \int_{0,\mathcal{L}}^{1}d\alpha,
		\end{equation}
		where $j_i: \mathcal{B} \longrightarrow \mathcal{B} \times [0,1]$ is given by $j_i(x) := (x,i).$
		\\Let $\phi_1$ and $\phi_0$ be two R.-N.-Lipschitz maps which are uniformly homotopic with a uniformly proper, R.-N.-Lipschitz homotopy $H$. Then, as a consequence of Corollary \ref{corollary1}, the operator $\int_{0,\mathcal{L}}^{1} \circ H$ is an $\mathcal{L}^2$-bounded operator which preserves the minimal and the maximal domains of the exterior derivative operators and, on the maximal domain, we have
		\begin{equation}
		    \phi_1^* - \phi_0^* = d (\int_{0,\mathcal{L}}^{1} \circ H) + (\int_{0,\mathcal{L}}^{1} \circ H) d
		\end{equation}
	So, in order to conclude the proof it is sufficient to find a couple of uniformly proper, R.-N.-Lipschitz homotopies.
	\\Let $A: (\mathcal{B} \times [0,1], g_S + dt) \longrightarrow (f^*T^\delta Y, g_f)$ be defined as
\begin{equation}
    A(w_{f(p)} \oplus v_{f(p)}, s) := s\cdot w_{f(p)} + (1-s)v_{f(p)}.
\end{equation}
It is an easy exercise to prove that $A$ is a uniformly proper R.-N.-Lipschitz map: a proof of this fact can be also found in the work of the author \cite{Spes2}. Then we can conclude by observing that $\tilde{p}_f \circ A$ is a homotopy between $\tilde{p}_{f_1}$ and $\tilde{p}_{f_2}$ and $\pi \circ A$ is a homotopy between $\pi_1$ and $\pi_2$.
\end{proof}
We also need the following Lemma: the proof is very similar to the same given in \cite{Spes2}, however, for the sake of completeness, here it is proved.
\begin{lem}\label{y and Y}
		Let $(Y,g)$ be a manifold of bounded geometry with some possibly unbounded ends. Fix $\tilde{p}_{id}: T^\delta Y \longrightarrow Y$ and let $\omega$ be a Thom form of the bundle $\pi:TY \longrightarrow Y$, where $\pi(v_p) = p$, such that $supp(\omega) \subset T^\delta Y$. Then for all $q$ in $N$
		\begin{equation}
		\int_{F_q} \omega = 1
		\end{equation}
		where $F_q$ is the fiber of  $\tilde{p}_{id}$.
	\end{lem}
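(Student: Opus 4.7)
The plan is to deduce $\int_{F_q}\omega=1$ from the Thom-form property $\int_{\pi^{-1}(q)}\omega=1$ of the bundle projection $\pi\colon TY\longrightarrow Y$, via a Stokes-type cobordism linking the two fibers. Since $\omega$ is closed and vertically compactly supported, the integral over a closed fiber-like cycle will be invariant under deformation through a family of submersions that connect $\pi$ to $\tilde{p}_{id}$.

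Concretely, I would first introduce the one-parameter family of maps $H\colon T^\delta Y\times[0,1]\longrightarrow Y$ defined by $H(v_p,s):=\exp_p(s\,\phi(p)\,v_p)$. This is well-defined, since $|s\,\phi(p)\,v_p|_h\leq\delta\leq r_i$, and clearly $H_0=\pi$ while $H_1=\tilde{p}_{id}$. For every $s\in[0,1]$ the map $H(\,\cdot\,,s)$ is a submersion by the same reasoning used for $\tilde{p}_{id}$: on $\{\phi(p)>0\}$ surjectivity of the differential comes from the vertical direction via the fact that the exponential is a local diffeomorphism at $s\,\phi(p)\,v_p$, whereas on $\{\phi(p)=0\}$ the map reduces to the base-point projection $v_p\mapsto p$, so surjectivity comes instead from the base direction. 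It follows that $H$ itself is a submersion and $q$ is a regular value.

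Second, I would verify that $W_q:=H^{-1}(q)$ is a compact $(n+1)$-dimensional $C^3$-submanifold with boundary
\begin{equation}
\partial W_q=\bigl(F_q\times\{1\}\bigr)\sqcup\bigl(-\pi^{-1}(q)\times\{0\}\bigr)\sqcup L_q,
\end{equation}
where $L_q\subset\{|v_p|_h=\delta\}\times[0,1]$ is a lateral component. Compactness is forced by the estimate $d_h(p,q)\leq s\,\phi(p)\,|v_p|_h\leq\delta$, so $\pi(W_q)$ lies in the compact set $\overline{B}_\delta(q)\cap Y$ by bounded geometry, while $|v_p|_h\leq\delta$ by construction. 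Since the Mathai-Quillen-Thom form $\omega$ may be arranged to have support inside a strictly smaller disk bundle $T^{\delta_0}Y$ with $\delta_0<\delta$, the pulled-back form $pr_1^*\omega$ vanishes on $L_q$, where $pr_1\colon T^\delta Y\times[0,1]\longrightarrow T^\delta Y$ is the first projection.

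Finally, Stokes' theorem together with $d\omega=0$ yields
\begin{equation}
\int_{F_q}\omega-\int_{\pi^{-1}(q)}\omega=\int_{\partial W_q}pr_1^*\omega=\int_{W_q}pr_1^*(d\omega)=0,
\end{equation}
and hence $\int_{F_q}\omega=\int_{\pi^{-1}(q)}\omega=1$ by the defining property of the Thom form. The most delicate step will be the manifold-with-boundary/corners structure of $W_q$ across the transition region where $\phi$ passes through $0$: there the vertical derivative of $H$ degenerates, but surjectivity of $dH$ is maintained by the base direction, so $W_q$ inherits the expected $C^3$-regularity and Stokes applies uniformly in $q$.
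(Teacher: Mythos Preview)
Your proposal is correct and follows essentially the same route as the paper: both introduce the homotopy $H(v_p,s)=\tilde{p}_{id}(s\cdot v_p)=\exp_p(s\,\phi(p)\,v_p)$ connecting $\pi$ to $\tilde{p}_{id}$, take the fiber $W_q=H^{-1}(q)$ as a cobordism, identify its boundary as the union of $\pi^{-1}(q)\times\{0\}$, $F_q\times\{1\}$, and a lateral piece in $S^\delta Y\times[0,1]$ on which $\omega$ vanishes, and then apply Stokes' theorem with $d\omega=0$. Your write-up is in fact slightly more explicit than the paper's about why $H$ is a submersion and why the fiber is compact, and you flag the corner/regularity issue near $\{\phi=0\}$ that the paper leaves implicit.
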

	\begin{proof}
		For all $q$ in $N$ the fiber $F_q$ is an oriented compact submanifold with boundary. The same also holds for $B^\delta_q$ which is the fiber of the projection $\pi: T^\delta N \longrightarrow N$ defined as $\pi(v_q) := q$.
		\\Define $H:T^\delta N \times [0,1] \longrightarrow N$ as
		\begin{equation}
		H(v_p,s) = \tilde{p}_{id}(s\cdot v_p).
		\end{equation}
		Since $H$ is a proper submersion, the fiber along $H$ given by $F_{H,q}$ is submanifold of $T^\delta N \times [0,1]$. Its boundary, in particular is
		\begin{equation}
		\partial F_{H,q} = B^\delta_q \times \{0\} \sqcup F_q \times \{1\} \cup A
		\end{equation}
		where $A$ is contained in
		\begin{equation}
			S^\delta N := \{v_{p} \in TN \vert \vert v_p\vert = \delta\}.
		\end{equation}
		Then, if $\omega$ is a Thom form of $TN$ whose support is contained in $T^\delta N$, then
		\begin{equation}\label{nano}
		0 = \int_{F_{H_q}} d \omega = d \int_{F_{H_q}} \omega + \int_{\partial F_{H_q}} \omega.
		\end{equation}
		Observe that $\omega$ is a $k$-form and $dim(F_{H_q})= k+1$. Then the first integral on the right side of \ref{nano} is $0$. Moreover, we obtain that $\omega$ is null on $A$, and so
		\begin{equation}
	\int_A \omega = 0.
		\end{equation}
	 Then, by the equality (\ref{nano}),
		\begin{equation}
		0 = \mp \int_{B^\delta} \omega \pm \int_{F_q}\omega.
		\end{equation}
	and we conclude.
	\end{proof}
\begin{lem}\label{lemma1}
Let $(X,g)$ and $(Y,h)$ be two manifolds of bounded geometry with unbounded ends and let $f: (M,g) \longrightarrow (N,h)$ be a uniform homotopy equivalence isometric on the ends. Then there are a couple of operators $y: \mathcal{L}^2(Y) \longrightarrow \mathcal{L}^2(Y)$ and $z: \mathcal{L}^2(X) \longrightarrow \mathcal{L}^2(X)$ such that they preserve the minimal and maximal domain of the exterior derivative operators and
\begin{equation}
    1 \pm T_f^\dagger T_f = dy + yd \mbox{    and    } 1 \pm T_fT_f^\dagger = dz + zd
\end{equation}
on the maximal domain of the exterior derivative operator. In this formula there is a $+$ if $f$ reverses the orientations and $-$ otherwise.
\end{lem}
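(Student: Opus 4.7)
The overall approach is to reduce both identities to the chain homotopy $\tilde p_{f,2}^* - \tilde p_{f,1}^* = dy_0 + y_0 d$ of the preceding lemma, by dressing $y_0$ (and its analogue $z_0$) with a product of two Mathai--Quillen--Thom forms and a fiber integration on the double bundle $\mathcal{B}$. The starting point is a closed-form expression for $T_f^\dagger T_f$ on $\mathcal{B}$. Using the definitions $T_f = \pi_\star \circ e_\omega \circ \tilde p_f^*$ and $T_f^\dagger = \tilde p_{f\star} \circ r_\omega \circ \pi^*$, the projection formula, and the fiber-product identity $\pi^*\pi_\star = pr_{2\star}\circ pr_1^*$ arising from the Cartesian square defining $\mathcal{B}$, one obtains
\begin{equation}
T_f^\dagger T_f \alpha \;=\; \tilde p_{f,2\star}\bigl(\omega_1 \wedge \omega_2 \wedge \tilde p_{f,1}^*\alpha\bigr),
\end{equation}
with $\omega_i := pr_i^*\omega$, and an analogous formula for $T_f T_f^\dagger$ on a bundle fibered over $Y$ through $\tilde p_f$ which I denote $\mathcal{B}'$.

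With these formulas in hand, set
\begin{equation}
y\alpha \;:=\; \tilde p_{f,2\star}\bigl(\omega_1 \wedge \omega_2 \wedge y_0\alpha\bigr), \qquad z\beta \;:=\; \pi_{1\star}\bigl(\omega_1 \wedge \omega_2 \wedge z_0\beta\bigr).
\end{equation}
Since the Mathai--Quillen--Thom form is closed, the wedge operator $e_{\omega_1 \wedge \omega_2}$ commutes with $d$; the pushforwards $\tilde p_{f,2\star}$ and $\pi_{1\star}$ commute with $d$ on $C^k$-forms by the same integration-along-the-fibers argument used in the proof of Proposition \ref{T_f}, and are $\mathcal L^2$-bounded by Proposition \ref{star}. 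Substituting the chain homotopy for $y_0$ and applying the projection formula on the $\tilde p_{f,2}^*\alpha$ term yields
\begin{equation}
dy\alpha + y d\alpha \;=\; \tilde p_{f,2\star}(\omega_1 \wedge \omega_2)\wedge \alpha \;-\; T_f^\dagger T_f \alpha,
\end{equation}
and analogously for $z$. The $\mathcal L^2$-boundedness of $y$ and $z$, and the preservation of the minimal and maximal domains of $d$, follow from the $\mathcal L^2$-boundedness of $y_0,z_0$, Proposition \ref{star}, and Corollary \ref{corollary1}.

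The hard part is therefore the residual scalar identity $\tilde p_{f,2\star}(\omega_1 \wedge \omega_2) = \pm 1$ on $Y$, with sign according as $f$ preserves or reverses orientation, and its analogue on $X$. Using the factorization $\tilde p_{f,2} = \tilde p_f \circ pr_2$ and iterated fiber integration, the projection formula together with the Thom property $\pi_\star\omega = 1$ reduces the question to showing $\tilde p_{f\star}\omega = \pm 1$. Writing the bundle map $F : f^*T^\delta Y \to T^\delta Y$, so that $\tilde p_f = \tilde p_{\mathrm{id}} \circ F$ and $\omega = F^*\omega_Y$, a second application of the projection formula gives $\tilde p_{f\star}\omega = \tilde p_{\mathrm{id}\star}\bigl(\omega_Y \wedge F_\star 1\bigr)$; the factor $F_\star 1$ is the local degree of $f$, which is $\pm 1$ almost everywhere because $f$ is a uniform homotopy equivalence between oriented manifolds, and Lemma \ref{y and Y} then identifies $\tilde p_{\mathrm{id}\star}\omega_Y$ with $1$. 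I expect the subtle point to be making this pointwise degree computation rigorous in the $\mathcal L^2$, non-compact framework, in particular handling carefully the collar region where $\phi$ transitions and the unbounded ends where $f$ is only a quasi-isometry; the collar form of $f$ from Remark \ref{rem1} and the decomposition $Y = N \cup E_Y$ should allow this to be done piecewise, with any residual boundary term in the radial homotopy $H(v,s) := \tilde p_f(s\phi(f(\pi v)) v)$ absorbed into $y$ and $z$.
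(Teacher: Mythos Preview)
Your overall architecture matches the paper's: both arguments dress the chain homotopy $y_0$ (resp.\ $z_0$) of the preceding lemma with the product Thom form $\omega_1\wedge\omega_2$ and a double fibre integration on $\mathcal B$, and both reduce the ``diagonal'' term to Lemma~\ref{y and Y} together with the degree of $f$. Your explicit formulae for $y$ and $z$ agree (up to harmless reorderings) with the paper's $y = \tilde p_{f\star}\circ e_\omega\circ \Pi_{1\star}\circ e_{\Pi_1^*\omega}\circ y_0$ and the analogous expression for $z$, and your appeal to Corollary~\ref{corollary1} for the domain preservation is exactly what the paper does.

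The one substantive difference is how the residual scalar is identified with $\pm 1$. You aim for the \emph{pointwise} identity $\tilde p_{f\star}\omega=\deg(f)$ via ``$F_\star 1=$ local degree of $f$''. This is delicate because $F:f^*T^\delta Y\to T^\delta Y$ is not a submersion, so $F_\star$ must be read as a current pushforward and the projection formula $F_\star F^*\omega_Y=\omega_Y\wedge F_\star 1$ needs the current framework; you yourself flag this as the subtle point. The paper sidesteps this completely by working in the \emph{weak} (inner-product) formulation: one pairs against a test form $\tau\beta$, so that the diagonal term becomes
\[
\int_{\mathcal B}\tilde p_{f,1}^*(\alpha\wedge\tau\beta)\wedge\omega_1\wedge\omega_2
=\int_{f^*T^\delta Y}F^*\bigl(\tilde p_{id}^*(\alpha\wedge\tau\beta)\wedge\omega_Y\bigr),
\]
a compactly supported \emph{top-degree} form, to which the classical degree formula $\int F^*\eta=\deg(F)\int\eta$ applies directly (with $\deg(F)=\deg(f)$ since $F$ is fibrewise the identity over $f$); Lemma~\ref{y and Y} then gives $\deg(f)\langle\alpha,\beta\rangle$. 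No currents, no pointwise Sard argument, no collar analysis is needed. The same device handles the $z$-side: pairing $\langle T_fT_f^\dagger\alpha,\beta\rangle_X=\langle T_f^\dagger\alpha,\tau T_f^\dagger\tau\beta\rangle_Y$ produces a top-degree integral over $N$, and the $\deg(f)$ factor again comes from the degree formula for $F$, after which the homotopy $\pi_2^*-\pi_1^*=dz_0+z_0d$ from the preceding lemma does the rest; there is no need to introduce a second fibre product $\mathcal B'$ over $Y$ or to re-prove the preceding lemma on it. In short: your route is correct but requires more machinery at the key step, while the paper's weak formulation makes that step a one-line application of the degree of a proper map on top forms.
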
	
\begin{proof}
We will start by $z$. We have the following diagrams.
	\begin{equation}
	\xymatrix{
& \mathcal{B}  \ar[d]^{\tilde{P}_{f,2}} \ar[r]^{\tilde{P}_{f,1}} & f^*_2(T^\delta Y) \ar[d]^{\tilde{p}_{f}}&\\
& f^*_1(T^\delta Y) \ar[r]^{\tilde{p}_{f}} & Y}
\end{equation}
where $\tilde{P}_{f_1}$ and $\tilde{P}_{f_2}$ are the bundle maps induced by $p_f$ and 
\begin{equation}
\xymatrix{
& \mathcal{B}  \ar[d]^{\Pi_{2}} \ar[r]^{\Pi_{1}} & f^*_2(T^\delta Y) \ar[d]^{\pi}&\\
& f^*_1(T^\delta Y) \ar[r]^{\pi} & X}
	     \end{equation}
	 where $\Pi_{1}$ and $\Pi_{2}$ are the bundle maps induced by $\pi$.
\\Fix $\alpha$ and $\beta$ in $\Omega^k_c(X)$ for some $k$ in $\mathbb{N}$. Then
\begin{equation}
\begin{split}
    \langle T_fT_f^\dagger \alpha; \beta \rangle_X &= \langle T_f^\dagger \alpha; \tau T_f^\dagger \tau \beta \rangle_X \\
    &= \int_{N} (\int_{F_2}\omega_2 \wedge \pi^*_2\alpha) \wedge (\int_{F_1}\omega_1 \wedge \pi^*_1 \tau \beta) \\
    &= deg(f) \int_{f^*_1TN} (\int_{F_2}\omega_2 \wedge \pi^*_2\alpha) \wedge \omega_1 \wedge \pi^*_1 \tau \beta \\
    &= deg(f) (-1)^{j(n+j)} \int_{f^*_1TN} \omega_1 \wedge \pi^*_1 \tau \beta \wedge (\int_{F_2}\omega_2 \wedge \pi^*_2\alpha) \\
    &= deg(f) (-1)^{j(n+j)} \int_{\mathcal{B}} \omega_1 \wedge \pi^*_1 \tau \beta \wedge \omega_2 \wedge \pi^*_2\alpha\\
    &= deg(f) \int_{\mathcal{B}} \pi^*_2\alpha \wedge \pi^*_1 \tau \beta \wedge \omega_1  \wedge \omega_2.
\end{split}
\end{equation}
Then, as a consequence of $\pi_2^* = \pi_1^* + dz_0 + z_0 d$, we obtain
\begin{equation}
    \begin{split}
     \langle T_fT_f^\dagger \alpha; \beta \rangle_X &= deg(f) \int_{\mathcal{B}} \pi^*_1\alpha \wedge \pi^*_1 \tau \beta \wedge \omega_1  \wedge \omega_2\\
     &+ deg(f) \int_{\mathcal{B}} (dz_0 + z_0 d) \alpha \wedge \pi^*_1 \tau \beta \wedge \omega_1  \wedge \omega_2 \\
     &= \langle deg(f)\cdot 1 \alpha; \beta \rangle_X + \langle [d(-z) + (-z)d]\alpha; \beta \rangle_X
    \end{split}
\end{equation}
where $z := - deg(f) \cdot \pi_\star \circ e_\omega \circ \Pi_{2\star} \circ e_{\Pi_2^*\omega} \circ z_0.$
\\Observe that $z$ is a composition of $\mathcal{L}^2$-bounded operators and $z(\Omega^*_c(X)) \subseteq C^2_c(\Lambda^*(T^*X))$. Moreover we also have that
\begin{equation}
z^\dagger = - deg(f) z_0^\dagger \circ r_{\Pi_2^*\omega} \circ \Pi_2^* \circ r_\omega \circ \pi^*
\end{equation}
is an $\mathcal{L}^2$-bounded operators and $-z^\dagger(\Omega^*_c(Y)) \subseteq C^2_c(\Lambda^*(T^*Y))$. So by Corollary \ref{corollary1}, we have that $z$ preserves the minimal and the maximal domain of $d$ and 
\begin{equation}
    1 \pm T_fT_f^\dagger = dz + zd
\end{equation}
where we have a $-$ if $f$ preserves the orientations and a $+$ otherwise.
\\
\\
\\Let us focus on the operator $y$. Given $\alpha$ and $\beta$ in $\Omega^*_c(Y)$, we obtain that
\begin{equation}
		\begin{split}
		\langle T^\dagger_fT_f \alpha, \beta \rangle &=  \langle T_f \alpha, \tau T_f \tau \beta \rangle \\
		&= \int_{M} (\int_{B^\delta_2}\tilde{p}_{f,2}^*\alpha \wedge \omega_2) \wedge (\int_{B^\delta_1}\tilde{p}_{f,1}^*\tau \beta \wedge \omega_1) \\
		&= \int_{f^*(T^\delta Y)_1} (\int_{B^\delta_2}\tilde{p}_{f,2}^*\alpha \wedge \omega_2) \wedge \tilde{p}_{f,1}^*\tau \beta \wedge \omega_1 \\
		&= (-1)^{(n + j)j} \int_{f^*(T^\delta Y)_1} \tilde{p}_{f,1}^*\tau \beta \wedge \omega_1 \wedge (\int_{B^\delta_2}\tilde{p}_{f,2}^*\alpha \wedge \omega_2)  \\
		&= (-1)^{(n + j)j} \int_{\mathcal{B}} \tilde{p}_{f,1}^*\tau \beta \wedge \omega_1 \wedge \tilde{p}_{f,2}^*\alpha \wedge \omega_2  \\
		&= (-1)^{(n + j)j}(-1)^{(n + j)j} \int_{\mathcal{B}} \tilde{p}_{f,2}^*\alpha \wedge \tilde{p}_{f,1}^*\tau \beta \wedge \omega_1 \wedge \omega_2  \\
		&= \int_{\mathcal{B}} \tilde{p}_{f,2}^*\alpha \wedge \tilde{p}_{f,1}^*\tau \beta \wedge \omega_1  \wedge \omega_2
		\end{split}
		\end{equation}
Then, if we consider the operator $y_0$ defined before, we obtain that
\begin{equation}
		\begin{split}
		\langle T^\dagger_fT_f \alpha, \beta \rangle &= \int_{\mathcal{B}} \tilde{p}_{f,1}^*(\alpha \wedge \tau \beta) \wedge \omega_1  \wedge \omega_2 \\
		&= \int_{\mathcal{B}} (dy_0 + y_0d) \alpha \wedge \tilde{p}_{f,1}^*\tau \beta \wedge \omega_1  \wedge \omega_2\\
		&= deg(f) \int_{TN} \tilde{p_{id}}^* (\alpha \wedge \tau \beta) \wedge \omega \\
		&+ \langle (dy + yd) \alpha, \beta \rangle_Y\\
		&= \langle deg(f)\cdot 1(\alpha); \beta \rangle_Y +  \langle (dy + yd) \alpha, \beta \rangle_Y
		\end{split}
		\end{equation}
where $ y := p_{f,\star} \circ e_{\omega} \circ \Pi_{1,\star} \circ e_{\Pi^*_1 \omega} \circ y_0$. Notice that $y$ is a composition of $\mathcal{L}^2$-bounded operators and $y(\Omega^*_c(Y)) \subseteq C^2_c(\Lambda^*(T^*Y))$. Moreover
\begin{equation}
y^\dagger = y_0^\dagger \circ r_{\Pi^*_1 \omega} \circ \Pi_{1}^* \circ r_\omega \circ p_f^* 
\end{equation}
and so $y^\dagger (\Omega^*_c(X)) \subseteq C^2_c(\Lambda^*(T^*X))$. So, as a consequence of Corollary \ref{corollary1}, we obtain that $y$ preserves the maximal and the minimal domain of $d$ and
\begin{equation}
1 \pm  T^\dagger_fT_f = dy + yd.
\end{equation}
\end{proof}
\begin{prop}\label{proposition1}
Let $f:(M,g) \longrightarrow (N,h)$ be a uniform homotopy equivalence quasi-isometric on the unbounded ends. Then, for each $k$ in $\mathbb{N}$, the operators $T_f$ and $T^\dagger_f$ induce the following isomorphisms
\begin{equation}
    \begin{split}
    H^k_{2, max}(M,g) &\cong H^k_{2, max}(N,h) \\
    \overline{H}^k_{2, max}(M,g) &\cong \overline{H}^k_{2, max}(N,h)\\
    H^k_{2, min}(M,g) &\cong H^k_{2, min}(N,h) \\
    \overline{H}^k_{2, min}(M,g) &\cong \overline{H}^k_{2, min}(N,h).
    \end{split}
\end{equation}
\end{prop}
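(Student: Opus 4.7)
The plan is to reduce to the case where $f$ is smooth and isometric on the unbounded ends, at which point the operators $T_f$ and $T_f^\dagger$ together with the chain homotopies from Lemma \ref{lemma1} give the isomorphism directly. No separate use of the homotopy inverse $s$ is needed: the chain homotopies $1 \pm T_f^\dagger T_f = dy + yd$ and $1 \pm T_f T_f^\dagger = dz + zd$ already say that, up to sign, $T_f^\dagger$ inverts $T_f$ at the level of cochain complexes.

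First I would reduce to the case where $f$ is smooth. Since $f$ is quasi-isometric on the unbounded ends, the restriction $f_{|E_X}$ is already a local diffeomorphism, so only a smoothing over the interior part $M$ is required. Applying Proposition \ref{appr} together with the smoothing techniques from \cite{Spes} gives a smooth map $\tilde f$ uniformly homotopic to $f$ which coincides with $f$ outside a small neighborhood of a closed subset of $M$, and which has uniformly bounded derivatives of every order in normal coordinates. This $\tilde f$ is still a uniform homotopy equivalence quasi-isometric on the unbounded ends. Next I would apply Proposition \ref{blu} to replace $g$ by a quasi-isometric metric $\tilde g$ on $X$ so that $\tilde f : (X, \tilde g) \longrightarrow (Y, h)$ is actually isometric on the unbounded ends while $(\overline M, \tilde g)$ is still of bounded geometry. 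Because $g$ and $\tilde g$ are quasi-isometric, the four $L^2$-cohomology groups of $(X, g)$ and $(X, \tilde g)$ coincide (the invariance remark at the end of Section 2), so it suffices to prove the isomorphism for $\tilde f : (X, \tilde g) \longrightarrow (Y, h)$.

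Now the setup is exactly the one treated by Proposition \ref{T_f} and Lemma \ref{lemma1}: $\tilde f$ is a smooth uniform homotopy equivalence isometric on the unbounded ends. Proposition \ref{T_f} tells me $T_{\tilde f}$ is $\mathcal L^2$-bounded, preserves $\mathrm{dom}(d_{\min})$ and $\mathrm{dom}(d_{\max})$, and commutes with $d$; consequently it descends to maps
\[
T_{\tilde f}^\sharp : H^k_{2, \bullet}(Y,h) \longrightarrow H^k_{2, \bullet}(X, \tilde g) \qquad (\bullet = \min, \max; \text{reduced or unreduced}).
\]
The same holds for $T_{\tilde f}^\dagger$ with source and target swapped, because $T_{\tilde f}^\dagger$ satisfies the hypotheses of Corollary \ref{corollary1} by construction. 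Lemma \ref{lemma1} then furnishes $\mathcal L^2$-bounded operators $y$ and $z$ preserving both domains of $d$, with
\[
\pm T_{\tilde f}^\dagger T_{\tilde f} = \mathrm{id} - (dy + yd), \qquad \pm T_{\tilde f} T_{\tilde f}^\dagger = \mathrm{id} - (dz + zd).
\]
On the unreduced cohomologies this immediately implies $T_{\tilde f}^\sharp \circ (T_{\tilde f}^\dagger)^\sharp = \pm \mathrm{id}$ and $(T_{\tilde f}^\dagger)^\sharp \circ T_{\tilde f}^\sharp = \pm \mathrm{id}$, since on a cocycle $\alpha$ we have $\alpha \mp T_{\tilde f}^\dagger T_{\tilde f}\alpha = d(y\alpha)$ lies in $\mathrm{im}(d)$. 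For the reduced groups the same identities work because $\mathrm{im}(d) \subseteq \overline{\mathrm{im}(d)}$, so the difference is a fortiori a boundary in the reduced sense.

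The main obstacle I anticipate is the smoothing-and-remetricising Step 1: one must verify that the smoothing from Proposition \ref{appr} really can be carried out so that the hypothesis \textit{smooth with uniformly bounded derivatives on $M$} of Proposition \ref{blu} is satisfied, and simultaneously that the smoothed map is still a uniform homotopy equivalence quasi-isometric on the unbounded ends. Because the smoothing only modifies $\tilde f$ on a region where the original $f$ already lives in the bounded-geometry piece $M$, the changes are localised and the uniform-homotopy-equivalence property is preserved via the same arguments as in Corollary 1.8 of \cite{Spes}. Everything else is then a matter of unwinding the chain-homotopy identities in the four cohomology theories.
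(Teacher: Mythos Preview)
Your proposal is correct and follows essentially the same route as the paper. The paper's proof is extremely terse: it invokes Lemma \ref{ban} to reduce to the isometric-on-ends case and then cites Lemma \ref{lemma1}; you spell out that reduction more carefully (the smoothing via Proposition \ref{appr}, the remetricisation via Proposition \ref{blu}, and the quasi-isometric invariance of $L^2$-cohomology) and make explicit how the chain-homotopy identities $1\pm T_f^\dagger T_f = dy+yd$, $1\pm T_fT_f^\dagger = dz+zd$ yield isomorphisms on all four cohomology theories.
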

\begin{proof}
Thanks to Lemma \ref{ban}, we can assume, without loss of generality, that $f$ is an isometry on th unbounded ends. Then the proof immediately follows by Lemma \ref{lemma1}.
\end{proof}
\begin{rem}
The morphisms induced by $T_f$ in maximal and minimal $\mathcal{L}^2$-cohomology, do not depend on the choice of the smooth $\varepsilon$-approximation $f_\varepsilon.$ The proof of this fact is essentially the same given in points 2 and 3 of Proposition 4.17 of \cite{Spes}. 
\end{rem}
\section{Consequences}
\subsection{Mapping cone}
In this subsection, given a Riemannian manifold $(M, g)$, we will denote by $\Omega^*_{m \backslash \mathcal{M}}(M,g):= dom(d_{M, min \backslash max})$ and by $d_{M, m \backslash \mathcal{M}} := d_{M, min \backslash max}$.
\\Let $f:(M,g) \longrightarrow (N,h)$ a uniform map quasi-isometric on the unbounded ends. It is not required that $f$ is a uniform homotopy equivalence. Thanks to Proposition \ref{T_f}, we know that there is a $\mathcal{L}^2$-bounded operator $T_f: \mathcal{L}^2(N,h) \longrightarrow  \mathcal{L}^2(M,g)$ such that $T_f(\Omega^*_{m \backslash \mathcal{M}}(M,g)) \subseteq \Omega^*_{m \backslash \mathcal{M}}(N, h)$. In this subsection we will define the $L^2$-mapping cone of a map $f$ and we will see some properties of this cone.
\begin{defn}
Let $f: (M, g) \longrightarrow (N,h)$ be a uniform map quasi-isometric on the unbounded ends between two  Riemannian manifold and let us denote by $(\Omega^*_{2, min \backslash max} (f), d_{f, min \backslash max})$ the cochain complexes 
\begin{equation}
    0 \rightarrow \Omega^0_{2, m \backslash \mathcal{M}} (f) \xrightarrow{d_{f, m \backslash \mathcal{M}, 0}} \Omega^1_{2, m \backslash \mathcal{M}} (f) \xrightarrow{d_{f, m \backslash \mathcal{M}, 1}} \Omega^3_{2, m \backslash \mathcal{M}} (f) \xrightarrow{d_{f, m \backslash \mathcal{M}, 2}} \ldots
\end{equation}
where $\Omega^*_{2, m \backslash \mathcal{M}} (f) :=  \Omega^*_{m \backslash \mathcal{M}}(N,h) \oplus \Omega^{*-1}_{m \backslash \mathcal{M}}(M, g)$ and $d_{f, m \backslash \mathcal{M}}(\alpha, \beta) := (-d_{N, m \backslash \mathcal{M}} \alpha; T_f\alpha - d_{M, m \backslash \mathcal{M}}\beta).$
\end{defn}
\begin{defn}
The \textbf{$k$-th group of the $L^2$-mapping cone of $f$} is the cohomology group of the $L^2$-mapping cone, i.e.
\begin{equation}
H^k_{2, m \backslash \mathcal{M}}(f) := \frac{ker(d_{f, m \backslash \mathcal{M}, k})}{im(d_{f, m \backslash \mathcal{M}, k})}.
\end{equation}
The \textbf{reduced $k$-th group of the $L^2$-mapping cone of $f$} is the group defined as
\begin{equation}
\overline{H}^k_{2, m \backslash \mathcal{M}}(f) := \frac{ker(d_{f, m \backslash \mathcal{M}, k})}{\overline{im(d_{f, m \backslash \mathcal{M}, k})}}.
\end{equation}
\end{defn}
Exactly as the mapping cone in the de Rham case, we have a short exact sequence
\begin{equation}
0 \rightarrow \Omega^{*-1}_{2, m \backslash \mathcal{M}}(M,g) \xrightarrow{A} \Omega^*_{2, m \backslash \mathcal{M}} (f) \xrightarrow{B} \Omega^*_{2, m \backslash \mathcal{M}}(N,h) \rightarrow 0,
\end{equation}
where $A(\omega) := (0, \omega)$ and $B(\alpha, \omega) := \alpha$. This sequence induces a long exact sequence on cohomology, and so
\begin{equation}
    0 \rightarrow H^0_{2, m \backslash \mathcal{M}}(f) \rightarrow H^0_{2, m \backslash \mathcal{M}}(M,g) \xrightarrow{\delta} H^0_{2, m \backslash \mathcal{M}}(N,h) \rightarrow H^1_{2, m \backslash \mathcal{M}}(f) \rightarrow \ldots
\end{equation}
where $\delta$ is the connecting homomorphism.
\\Following the same proof given at pag. 78 of the book of Bott and Tu \cite{bottu}, we obtain that $\delta[\omega] := [T_f \omega]$. So we obtain the following Proposition.
\begin{prop}\label{cone}
Let $f:(M,g) \longrightarrow (N,h)$ be a uniform map quasi-isometric on the unbounded ends of $M$ and $N$. Then, the two following statements are equivalent:
\begin{enumerate}
    \item the morphism induced by $T_f$ on $L^2$-cohomology is an isomorphism,
    \item all the cohomology groups of $\Omega^*_{2, m \backslash \mathcal{M}} (f)$ are null.
\end{enumerate}
\end{prop}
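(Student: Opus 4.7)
The plan is to apply the standard long-exact-sequence bookkeeping to the short exact sequence of cochain complexes
\[
0 \to \Omega^{*-1}_{2,m\backslash\mathcal{M}}(M,g) \xrightarrow{A} \Omega^{*}_{2,m\backslash\mathcal{M}}(f) \xrightarrow{B} \Omega^{*}_{2,m\backslash\mathcal{M}}(N,h) \to 0
\]
which the excerpt has just constructed. I would first verify that $A$ and $B$ are honest $\mathcal{L}^2$-bounded cochain maps between the Hilbert cochain complexes in play, which is immediate because they are the inclusion and projection associated with the direct-sum decomposition and because $T_f$ preserves the minimal and maximal domains by Proposition \ref{T_f}. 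Since the excerpt has already identified the connecting homomorphism as $\delta[\alpha] = [T_f\alpha]$, the induced long exact sequence takes the shape
\[
\cdots \to H^{k-1}_{2,m\backslash\mathcal{M}}(M,g) \to H^{k}_{2,m\backslash\mathcal{M}}(f) \to H^{k}_{2,m\backslash\mathcal{M}}(N,h) \xrightarrow{T_f} H^{k}_{2,m\backslash\mathcal{M}}(M,g) \to H^{k+1}_{2,m\backslash\mathcal{M}}(f) \to \cdots
\]

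The equivalence of (1) and (2) is then a short diagram chase from this long exact sequence. For the implication $(2)\Rightarrow(1)$, the vanishing of every $H^k_{2,m\backslash\mathcal{M}}(f)$ collapses the sequence to short exact sequences $0 \to H^k_{2,m\backslash\mathcal{M}}(N,h) \xrightarrow{T_f} H^k_{2,m\backslash\mathcal{M}}(M,g) \to 0$ and $T_f$ is an isomorphism in every degree. Conversely, if $T_f$ is an isomorphism on cohomology in every degree, then surjectivity of $T_f$ in degree $k-1$ forces, by exactness, the map $H^{k-1}_{2,m\backslash\mathcal{M}}(M,g) \to H^{k}_{2,m\backslash\mathcal{M}}(f)$ to be zero, while injectivity of $T_f$ in degree $k$ forces the map $H^{k}_{2,m\backslash\mathcal{M}}(f) \to H^{k}_{2,m\backslash\mathcal{M}}(N,h)$ to be zero as well; exactness of the segment $H^{k-1}_{2,m\backslash\mathcal{M}}(M,g) \xrightarrow{0} H^{k}_{2,m\backslash\mathcal{M}}(f) \xrightarrow{0} H^{k}_{2,m\backslash\mathcal{M}}(N,h)$ at the middle term then gives $H^{k}_{2,m\backslash\mathcal{M}}(f) = 0$.

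The only point requiring attention is the derivation of the long exact sequence itself. For the unreduced $L^2$-cohomology this is purely algebraic, following from the snake lemma applied to any short exact sequence of cochain complexes, so no analytic input beyond the $\mathcal{L}^2$-boundedness of $T_f$ and its compatibility with the minimal and maximal domains is needed, both of which have already been established. For the reduced versions of these cohomologies the analogous argument breaks down, because short exact sequences of Hilbert cochain complexes need not induce long exact sequences on reduced cohomology; I would therefore read the statement as concerning the unreduced mapping-cone cohomology defined just above it, which is also the natural reading given the formulation of the proposition.
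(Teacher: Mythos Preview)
Your proposal is correct and follows exactly the approach the paper intends: the paper's own proof is a single sentence remarking that this is ``a classical proof which holds for each cochain morphism $T_f : A \longrightarrow B$ between cochain complexes on an additive category,'' and your long-exact-sequence diagram chase is precisely that classical argument made explicit. Your observation that the statement should be read as concerning unreduced cohomology is also in line with the paper, which immediately afterwards notes that in the reduced case the induced long sequence is only weakly exact.
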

\begin{proof}
This is a classical proof which holds for each cochain morphism $T_f :A \longrightarrow B$ between cochain complexes on an additive category. 
\end{proof}
\begin{rem}
The author used a \textit{bounded geometry version} of Proposition \ref{cone} in \cite{Spes2} in order to prove the invariance of the Roe Index of signature operator of a manifold of bounded geometry under uniform homotopy equivalences which preserve the orientations. As a consequence of Proposition \ref{cone}, it could be interesting to prove to generalize this result in a larger setting, for example in the case of complete Riemannian manifolds with uniform homotopy equivalence which are quasi-isometric on the unbounded ends.
\end{rem}
In the reduced case we also obtain a long sequence
\begin{equation}
    0 \rightarrow \overline{H}^0_{2, m \backslash \mathcal{M}}(f) \xrightarrow{B^*} \overline{H}^0_{2, m \backslash \mathcal{M}}(M,g) \xrightarrow{T_f} \overline{H}^0_{2, m \backslash \mathcal{M}}(N,h) \xrightarrow{A^*} \overline{H}^1_{2, m \backslash \mathcal{M}}(f) \rightarrow \ldots
\end{equation}
but it is not exact this time. Indeed it is exact only on $\overline{H}^k_{2, m \backslash \mathcal{M}}(f)$ and on $\overline{H}^k_{2, m \backslash \mathcal{M}}(M,g)$ while on $\overline{H}^k_{2, m \backslash \mathcal{M}}(N,h)$ is just \textit{weakly exact} which means that $ker(T_f) = \overline{im(B^*)}.$
\subsection{Uniform homotopy invariance of signature}
Let $(M,g)$ be a complete Riemannian manifold. Recall that in this case there is only one closure for the operator $d$. This means that the maximal and minimal $L^2$-cohomology groups coincide, both the reduced and the unreduced ones. In next pages we will denote by $d$ the unique closed extension of the exterior derivative operator.
\\In this subsection we introduce the $L^2$-signature $\sigma_M$ of a manifold $(M, g)$ with $dim(M) = 4k$ such that $\overline{H}^{2k}_2(M,g)$ is finite dimensional. This $L^2$-signature is the signature of a pairing defined on $\overline{H}^{2k}_{2}(M,g)$. Consider the operator $d + d^*$. This operator switches the eigenspaces of the chiral operator $\tau$, so it is possible to define the $L^2$-signature operator $(d + d^*)^+$ as the restriction of $d + d^*$ to the $+1$-eigenspace of $\tau$.
\\We obtain that $(d + d^*)^+$ is a Fredholm operator and the its index equals the $L^2$-signature.
The definitions and the proofs of all these facts can be found in the work of Bei \cite{Bei3}, in particular at pag. 19-21.
\begin{prop}
Let $f:(M,g) \longrightarrow (N,h)$ be a uniform homotopy equivalence quasi-isometric on the unbounded ends. In this subsection we will prove that
\begin{enumerate}
\item $T_f$ well-behaves with respect to the pairings on $M$ and $N$, i.e. $\langle T_f [\alpha], T_f \beta] \rangle_M = deg(f) \cdot \langle [\alpha], [\beta] \rangle_N$,
\item from the previous point we obtain that $\sigma_M = deg(f) \cdot \sigma_N$ and the index of the $L^2$-signature operator is an invariant under uniform homotopy equivalences quasi-isometric on the unbounded ends which preserve the orientations.
\end{enumerate}
\end{prop}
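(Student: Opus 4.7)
The plan is to establish point (1) by a double-pullback calculation patterned on the proof of Lemma \ref{lemma1}, and then derive (2) from (1), Proposition \ref{proposition1}, and the identification of the $L^2$-signature with the index of $(d+d^*)^+$ from \cite{Bei3}.

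For (1), fix closed $\mathcal{L}^2$-representatives $\alpha,\beta$ of classes in $\overline{H}^{2k}_2(N,h)$, so that
\begin{equation}
\langle T_f[\alpha], T_f[\beta]\rangle_M = \int_M T_f\alpha\wedge T_f\beta.
\end{equation}
Writing $T_f = \pi_\star\circ e_\omega\circ \tilde{p}_f^*$ and introducing, as in Lemma \ref{lemma1}, the fibered bundle $\mathcal{B}=f_1^*T^\delta Y\oplus f_2^*T^\delta Y$ together with the projections $\Pi_1,\Pi_2$ and lifted submersions $\tilde{p}_{f,1},\tilde{p}_{f,2}$, the projection formula combined with Fubini turns the integral above into
\begin{equation}
\pm\int_\mathcal{B}\tilde{p}_{f,1}^*\alpha\wedge\tilde{p}_{f,2}^*\beta\wedge\Pi_1^*\omega\wedge\Pi_2^*\omega.
\end{equation}

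Next I would apply the chain homotopy $\tilde{p}_{f,2}^*-\tilde{p}_{f,1}^*=dy_0+y_0 d$ established in the lemma just before Lemma \ref{lemma1}: since $d\beta=0$ we have $\tilde{p}_{f,2}^*\beta=\tilde{p}_{f,1}^*\beta+d(y_0\beta)$, and the $d(y_0\beta)$-contribution vanishes after a Stokes-type argument. What remains is $\int_\mathcal{B}\tilde{p}_{f,1}^*(\alpha\wedge\beta)\wedge\Pi_1^*\omega\wedge\Pi_2^*\omega$. Integrating along the $\Pi_1$-fiber, on which $\Pi_2^*\omega$ restricts to the Mathai-Quillen-Thom form of $f_2^*T^\delta Y$ with fiber integral $1$, collapses the expression to $\int_{f^*T^\delta Y}\tilde{p}_f^*(\alpha\wedge\beta)\wedge\omega$. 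As in the derivation of the $\deg(f)$ factor inside the proof of Lemma \ref{lemma1} (use $\tilde{p}_f\simeq f\circ\pi$, the projection formula $\int_{f^*TY}\pi^*\mu\wedge\omega=\int_M\mu$, and then the degree of $f$), this reduces to $\deg(f)\int_N\alpha\wedge\beta$, proving (1).

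Point (2) is then formal. By Proposition \ref{proposition1}, $T_f:\overline{H}^{2k}_2(N,h)\to\overline{H}^{2k}_2(M,g)$ is an isomorphism, and by (1) it scales the intersection pairing by the nonzero real factor $\deg(f)$; hence $\sigma_M=\deg(f)\cdot\sigma_N$. For an orientation-preserving $f$ we have $\deg(f)=1$, so $\sigma_M=\sigma_N$, and since the $L^2$-signature coincides with the Fredholm index of $(d+d^*)^+$ by \cite{Bei3}, that index is likewise invariant.

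The main obstacle is the Stokes-vanishing step on the non-compact total space $\mathcal{B}$: since $\alpha,\beta$ are merely $\mathcal{L}^2$, the integration by parts is not formal. I would first approximate $\alpha,\beta$ in the graph norm of $d$ by compactly supported $C^1$-forms (obtained by multiplying against a bounded-geometry exhaustion cutoff and then applying the regularizing operator $R^N_\epsilon$), perform the Stokes computation for these approximations where the integrand is compactly supported on $\mathcal{B}$, and pass to the limit using $\mathcal{L}^2$-boundedness of $T_f$, $y_0$ and the intermediate wedge-and-integrate operations, in the spirit of the argument of Corollary \ref{corollary1}. The same approximation justifies both the Fubini/projection-formula step and the identification $\int_M f^*(\alpha\wedge\beta)=\deg(f)\int_N\alpha\wedge\beta$ for closed $\mathcal{L}^2$-forms.
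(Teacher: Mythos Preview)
Your argument is essentially correct, but it takes a different route from the paper's. The paper does not go back to the double bundle $\mathcal{B}$ at all; instead it observes the factorisation $T_f=f^*\circ T_{id}$ (this follows from $\tilde{p}_f=\tilde{p}_{id}\circ F$ and $\omega=F^*\omega_Y$, together with the fiberwise identity $\int_{F_0'}F^*(\cdot)=f^*\int_{F_0}(\cdot)$). With this in hand one simply writes
\[
\int_M T_f\alpha\wedge T_f\beta=\int_M f^*(T_{id}\alpha\wedge T_{id}\beta)=\deg(f)\int_N T_{id}\alpha\wedge T_{id}\beta,
\]
and then uses a single $\mathcal{L}^2$-bounded homotopy $K$ on $N$ (built from $\tilde{p}_h(w_p,s)=\tilde{p}_{id}(s\,w_p)$) satisfying $T_{id}-\mathrm{Id}=dK+Kd$ to replace $T_{id}\alpha,T_{id}\beta$ by $\alpha+d\eta,\beta+d\nu$; the cross terms $\int_N d(\cdots)$ vanish by the usual $d_{\min}=d_{\max}$ approximation on the complete manifold $N$. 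So the Stokes/approximation step you flag is carried out once, on $N$, rather than on the $12k$-dimensional total space $\mathcal{B}$.

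Your approach recycles the machinery of Lemma~\ref{lemma1}: it is heavier (one chain homotopy $y_0$ on $\mathcal{B}$, a second implicit one in the passage from $\tilde{p}_f^*$ to $(f\circ\pi)^*$, and two fiber-collapse steps), but it has the virtue of not introducing the new operator $K$. The paper's route buys a cleaner bookkeeping of the $\deg(f)$ factor and a single, low-dimensional Stokes step; yours buys a proof that stays entirely inside constructions already set up. For point~(2) both proofs are identical: Proposition~\ref{proposition1} plus the identification of the $L^2$-signature with $\mathrm{ind}\,(d+d^*)^+$ from \cite{Bei3}.
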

\begin{proof}
In order to prove the first point, we need to introduce the pairing $\langle \cdot ; \cdot \rangle_M$. This is defined in pag. 19 of \cite{Bei3} as
\begin{equation}\label{pairing}
	\begin{split}
	  \langle \cdot , \cdot \rangle_M : &\overline{H}^i_{2}(M, g) \times \overline{H}^i_{2}(M, g) \longrightarrow \mathbb{R} \\
	    &([\eta], [\omega]) \longrightarrow \int_{M} \eta \wedge \omega
	    \end{split}
	\end{equation}
Let $id$ be the identity map on $N$ and let $\tilde{p}_{id}: T^\delta N \longrightarrow N$ be the submersion related to $id$. Because of Lemma \ref{lemma1}, we know that this is a R.-N.-Lipschitz map and so $T_{id}$ is a $L^2$-bounded operator. Let us denote by $K := \int_{0\mathcal{L}}^1 \circ \tilde{p}_{h}^*$, where $\tilde{p}_h: T^\delta N \times [0,1] \longrightarrow N$ is defined as $\tilde{p}_{h}(w_{p}, s) := \tilde{p}_{id}(s\cdot w_p)$. By applying the same proof of Lemma 4.11 of \cite{Spes}, we obtain that $\tilde{p}_{h}$ is a R.-N.-Lipschitz map and so $K$ is an $L^2$-bounded operator such that 
\begin{equation}
T_{id} - Id = dK + Kd    
\end{equation}
for each smooth form $\alpha$. Then, as a consequence of Lemma \ref{lemma1}, we obtain that $T_{id}$ and $K$ both preserve the minimal and the maximal domain of $d$.
\\Observe that the operator $T_f = f^* \circ T_{id}$: this directly follows by the definition of $T_f$ and by $\int_{F_0'} F^*\alpha = f^* \int_{F_0} \alpha$. Here $F^*$ is the bundle map induced by $f$, and $F_0$ and $F_0'$ are the fibers of $TN$ and of $f^*TN$. Observe that $T_{id} \alpha $ is a smooth form outside a subset of null measure (that is $\partial N$) and the same happens to $f^*T_{id} \alpha$. 
\\Then we can easily conclude as follows
	\begin{equation}
	    \begin{split}
	   \langle [T_f \alpha] , [T_f \beta] \rangle_M &= \int_M T_f\alpha \wedge T_f\beta \\
	   &= \int_M f^*(T_{id}\alpha) \wedge f^* (T_{id}\beta) = \int_M f^* (T_{id}\alpha \wedge T_{id}\beta) \\
	   &= deg(f) \cdot \int_N  T_{id}\alpha \wedge T_{id}\beta = deg(f) \int_N (\alpha + d\eta) \wedge (\beta + d\nu)\\
	   &= deg(f) \int_N \alpha \wedge \beta + deg(f) \int_N d(\alpha \wedge \nu) +\\ &+deg(f) \int_N d(\eta \wedge \beta) + deg(f) \int_N d(\eta \wedge d\nu)\\
	   &= deg(f) \int_N \alpha \wedge \beta + 0 = deg(f) \langle [\alpha] , [\beta]\rangle_N.
	    \end{split}
	\end{equation}
We proved point 1. The proof of point 2. is a direct consequence of point 1. and Theorem 4.2 of \cite{Bei3}.
\end{proof}
\subsection{Compact manifolds with unbounded ends}\label{Lott}
Notice that Proposition \ref{proposition1} is coherent with Proposition 5 of Lott in \cite{Lott}. In the work of Lott, it is proved that if two complete Riemannian manifolds $(M,g)$ and $(N,h)$ are isometric outside a compact set, then, for each $k$ in $\mathbb{N}$,
\begin{equation}
    dim(\overline{H}^k_2(M,g)) = +\infty \iff dim(\overline{H}^k_22(N,h)) = +\infty.
\end{equation}
Then, as a consequence of Proposition \ref{proposition1}, we can say something more if we add some assumptions on $f$ on the compact subsets. On the other hand we are not assuming the completeness of $(M,g)$ and $(N,h)$ and we also relax the assumptions on $f$ on the ends.
\begin{cor}
Let $(M,g)$ and $(N,h)$ be two (possibly not complete) oriented Riemannian manifolds. Let $K$ be a compact subset of $M$. We denote by $K' := f(K)$. Assume the existence of a homotopy equivalence $f: (M,g) \longrightarrow (N,h)$ such that $f_{\vert M \setminus K}: (M \setminus K, g) \longrightarrow (N \setminus K', h)$ is a quasi-isometry. Then both the minimal and maximal $L^2$-cohomology groups are isomorphic. The same also happens for the reduced $L^2$-cohomology groups.
\end{cor}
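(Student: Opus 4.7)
The plan is to reduce this corollary to Proposition~\ref{proposition1} by equipping $M$ and $N$ with suitable decompositions as manifolds of bounded geometry with unbounded ends and promoting $f$ to a uniform homotopy equivalence quasi-isometric on those ends. The compact case is trivial, since for a closed manifold the $L^2$-cohomology coincides with de Rham cohomology and is a classical homotopy invariant; I will therefore assume $M$ (and hence $N$) is non-compact.

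The first step is to decompose $M$. Since $K$ is compact, I would choose a compact submanifold with smooth boundary $\overline{M_1} \subset M$ containing $K$ strictly in its interior, with $d_g(K,\partial M_1) \geq r$ for some $r>0$. Compactness of $\overline{M_1}$ together with smoothness of its boundary makes $M_1$ automatically an open of bounded geometry (all conditions of Definition~\ref{open} are free on a compact manifold with boundary). Setting $E_M := M\setminus K$, the intersection $M_1 \cap E_M$ contains an inward collar of $\partial M_1$, while $E_M\setminus M_1 \neq \emptyset$ by non-compactness of $M$. Thus $(M,g)=M_1\cup E_M$ becomes a manifold of bounded geometry with unbounded ends.

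For the matching decomposition of $N$ the crucial requirement is $f(M_1)\subseteq N_1$ together with $f(\partial M_1)\subseteq \partial N_1$. Since $\partial M_1 \subset M\setminus K$, where $f$ is a quasi-isometry and hence a local diffeomorphism, the image $f(\partial M_1)$ is a smoothly immersed hypersurface in $N\setminus K'$. By a small transversality perturbation of $\partial M_1$ inside the quasi-isometric region $M\setminus K$ (equivalently, a small uniform homotopy of $f$ supported there, available via Proposition~\ref{appr}), I may assume $f|_{\partial M_1}$ is an embedding and that $\Sigma := f(\partial M_1)$ is a smoothly embedded closed hypersurface in $N$ separating $K'$ from infinity. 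I would then define $\overline{N_1}$ to be the closure of the bounded component of $N\setminus \Sigma$, set $N_1$ equal to its interior (so $\partial N_1 = \Sigma$), and $E_N := N\setminus K'$. With these choices $f(M_1)\subseteq N_1$, $f(\partial M_1) = \partial N_1$, and $f(E_M)\subseteq E_N$, and $(N,h)$ is a manifold of bounded geometry with unbounded ends matched to the one on $M$.

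It remains to verify that $f$ is a uniform homotopy equivalence: on $E_M$ this follows from the quasi-isometry hypothesis (quasi-isometries are Lipschitz and uniformly proper), while on the compact $\overline{M_1}$ uniform continuity and uniform properness are automatic; the two behaviours glue to give a uniform map. The homotopy inverse $s$ and the homotopies $s\circ f \sim \mathrm{id}_M$, $f\circ s \sim \mathrm{id}_N$ are handled analogously, choosing representatives that respect the compact/quasi-isometric splitting (on the ends one can take $s|_{E_N}$ to be the quasi-isometric inverse of $f|_{E_M}$, and extend continuously over the compact part). Proposition~\ref{proposition1} then applies and yields the four desired isomorphisms. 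The expected main obstacle is the boundary-matching in Step~2: in full generality $f|_{\partial M_1}$ need not be injective, so one has to perturb $\partial M_1$ into general position or enlarge the compact region until the image becomes an embedded hypersurface. This is possible because all such modifications take place in relatively compact regions, where the bounded geometry conditions are free.
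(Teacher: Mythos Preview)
Your reduction to Proposition~\ref{proposition1} is exactly the paper's strategy, and your choice of a compact smooth $\overline{M_1}\supset K$ as the open of bounded geometry is equivalent to the paper's choice of a small neighbourhood $B_\delta(K)$. The substantive difference is in how the matching decomposition on $N$ is obtained. The paper does not perturb $\partial M_1$ at all: instead it first invokes Proposition~\ref{blu} to replace $g$ by a quasi-isometric metric $\tilde g$ for which $f$ becomes an \emph{isometry} on the unbounded ends (this is harmless for the $L^2$-cohomologies). Once $f$ is isometric outside the compact piece, one simply sets $N_1:=f(M_1)$; the conditions $f(\partial M_1)\subseteq\partial N_1$ and $f(M_1)\subseteq N_1$ are then automatic, and no transversality or embedding argument is needed.

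Your direct route has two soft spots. First, Proposition~\ref{appr} is a smoothing result; it does not turn an immersion into an embedding, so it does not by itself justify that a small perturbation of $f$ (or of $\partial M_1$) makes $\Sigma=f(\partial M_1)$ an embedded separating hypersurface. Second, the phrase ``the quasi-isometric inverse of $f|_{E_M}$'' presupposes that the local diffeomorphism $f|_{E_M}$ is bijective onto $E_N$, which is not part of the hypothesis; the definition of uniform homotopy equivalence does not require the inverse to be quasi-isometric on the ends, only uniformly continuous, so you are in any case proving more than you need there. Both points dissolve if you adopt the paper's shortcut: pass to the isometric-on-ends situation via Proposition~\ref{blu} first, and then the boundary matching and the uniformity of the inverse and homotopies (which can be taken to equal $f^{-1}$ and the constant homotopy, respectively, outside a compact set) become immediate.
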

\begin{proof}
Thanks to Proposition \ref{blu} we can assume that $f_{\vert K}$ is an isometry. Fix a number $r > 0$ and denote by $B_r(K)$ (resp. $B_{r}(K')$) the subset of the points whose distance from $K$ (resp. $K'$) is less to $r$. Let $\delta$ be a number small enough such that $B_\delta(K)$ and $B_\delta(K')$ are two tubular neighborhoods of $K$ and $K'$. Then $B_{\delta}(K)$ and $B_{\delta}(K')$ are two open of bounded geometry and $f$ is uniform homotopy equivalence isometric on the unbounded ends. Then we conclude by applying Proposition \ref{proposition1}.
\end{proof}
\begin{rem}
Probably, this result for unreduced $L^2$-cohomology could be also achieved by using the results of \cite{Spes} and a Mayer-Vietoris argument. However, the difficult part of this approach would be the reduced case. Indeed it is a well-known fact that, in general, there is no Mayer-Vietoris sequence for the reduced $L^2$-cohomology.
\end{rem}
\subsection{Manifolds not quasi-isometric with same cohomologies}
We conclude this paper by showing an easy example of two metrics on $\mathbb{R}$ which are not quasi-isometric but they have the same minimal and maximal $L^2$-cohomology groups, both in their reduced and unreduced versions.
\\Let $\phi: \mathbb{R} \longrightarrow \mathbb{R}_{>0}$ be a smooth map such that
\begin{itemize}
    \item $\phi(0) = 1$ and, for each $k$ in $\mathbb{N}$, the derivative of order $k$ in $0$ of $\phi$ is $0$,
    \item outside $[-4, 4]$ we have $\phi(t) = e^{t^2}$.
\end{itemize}
So denote by $g_1$  the Riemannian metric on $\mathbb{R}$ as $g_{1}(t) := \phi(t) dt^2$. Moreover let $\psi: \mathbb{R} \longrightarrow \mathbb{R}$ be the function defined as follows:
\begin{equation}\psi(t) :=
\begin{cases}
 \phi(t - sign(t)1) \mbox{   if   } \vert t \vert > 1\\
 1 \mbox{ otherwise.}
\end{cases}
 \end{equation}
Denote by $g_2$ the Riemannian metric defined as $g_{2}(t) := \psi(t) dt^2$. Notice that $\lim\limits_{t \rightarrow \pm \infty} \frac{\phi(t)}{\psi(t)} = +\infty.$ Then $g_1$ and $g_2$ are not quasi-isometric.
However, if we define the map $f:(\mathbb{R}, g_2) \longrightarrow (\mathbb{R}, g_1)$ as
\begin{equation}
f(t) := \begin{cases}
 t - sign(t)1 \mbox{  if   } \vert t \vert > 1\\
 0 \mbox{ otherwise.}
\end{cases}    
\end{equation}
this is a uniform map isometric on the unbounded ends $(-\infty, -5] \cup [5, +\infty)$ and it is also a uniform homotopy equivalence. So, as a consequence of Subsection \ref{Lott}, we conclude that minimal and maximal $L^2$-cohomology groups, both in their reduced and unreduced versions, are isomorphic.

	\phantomsection
	\bibliographystyle{sapthesis} 
	\addcontentsline{toc}{section}{\bibname}
	\end{document}